\newtheorem{theorem}{Theorem}[section]
\newtheorem{lemma}[theorem]{Lemma}
\newtheorem{corollary}[theorem]{Corollary}
\newtheorem{proposition}[theorem]{Proposition}
\theoremstyle{definition}
\newtheorem{definition}[theorem]{Definition}
\newtheorem{remark}[theorem]{Remark}
\numberwithin{equation}{section}
\newcommand{\C}{{\mathbb C}}
\newcommand{\Z}{{\mathbb Z}}
\newcommand{\T}{{\mathbb T}}
\newcommand{\N}{{\mathbb N}}
\newcommand{\R}{{\mathbb R}}
\newcommand{\bk}{\mathbf{k}}
\newcommand{\FT}{\mathscr{F}_{\T^n}}
\newcommand{\FR}{\mathscr{F}_{\R^n}}
\newcommand{\CT}{C^{\infty}(\T^n)}
\newcommand{\card}{\mathop{\textrm{\upshape card}}}
\newcommand{\dbar}{{\textstyle d^{\hspace*{-0.4em}-}\hspace*{-0.15em}}}
\newcommand{\sumint}{\mathop{\mbox{$\displaystyle\sum\limits_{\bk\in\Z^n}\int_{\T^n}$}}}
\newcommand{\ossumint}{\mathop{\mbox{$\displaystyle\mathrm{Os}\!-\!\!\!\!\sum\limits_{\bk\in\Z^n}\int_{\T^n}$}}}
\newcommand{\supp}{\mathop{\textrm{supp}}}
\newcommand{\ph}{\varphi}
\newcommand{\op}{\mathop{\textrm{\upshape op}}}
\newcommand{\opp}[1]{A_{#1}}
\newcommand{\id}{\mathop{\textrm{\upshape id}}\nolimits}
\renewcommand{\Re}{\mathop{\textrm{Re}}}
\renewcommand{\epsilon}{\varepsilon}
\renewcommand{\bar}[1]{\overline{#1}}
\renewcommand{\tilde}[1]{\widetilde{#1}}
\renewcommand{\textmd}[1]{\color{black}#1\color{black}}
\begin{document}
\title[Pseudodifferential operators on the torus ]{Generation of semigroups for vector-valued pseudodifferential operators on the torus}
\author{B. Barraza Mart\'{i}nez}
\address{B. Barraza Mart\'{i}nez, Universidad del Norte, Departamento de Matem\'aticas,  Barranquilla (Colombia)}
\email{bbarraza@uninorte.edu.co}

\author{R.\ Denk}
\address{R.\ Denk, Universit\"at Konstanz, Fachbereich f\"ur Mathematik und Statistik, Konstanz (Germany)}
\email{robert.denk@uni-konstanz.de}

\author{J. Hern\'andez Monz\'on}
\address{J. Hern\'andez Monz\'on, Universidad del Norte, Departamento de Matem\'aticas,  Barranquilla (Colombia)}
\email{jahernan@uninorte.edu.co}

\author{T.\ Nau}
\address{T.\ Nau, Universit\"at Konstanz, Fachbereich f\"ur Mathematik und Statistik, Konstanz (Germany)}
\email{tobias.nau@uni-konstanz.de}

\date{\today\\
The authors would like to thank COLCIENCIAS (Project 121556933488) and DAAD for the financial support.}

\begin{abstract}
We consider toroidal pseudodifferential operators with operator-valued symbols, their mapping properties  and the generation of analytic semigroups on vector-valued Besov and Sobolev spaces. We show that a parabolic toroiodal pseudodifferential operator generates an analytic semigroup on the Besov space $B_{pq}^s(\T^n,E)$ and on the Sobolev space $W_p^k(\T^n,E)$, where $E$ is an arbitrary Banach space, $1\le p,q\le \infty$, $s\in\R$ and $k\in\N_0$. For the \textmd{proof of the } Sobolev space result, we establish a uniform estimate on the kernel which is given as an infinite \textmd{parameter-dependent } sum. An application to abstract non-autonomous periodic pseudodifferential Cauchy problems gives the existence and uniqueness of classical solutions for such problems.
\end{abstract}

\maketitle

\keywords{\footnotesize
\textbf{Keywords:} Pseudodifferential operators, vector-valued Sobolev spaces, toroidal Fourier transform, generation of analytic semigroup.}

\subjclass{\footnotesize{\textbf{Mathematical subject classification}: 35S05, 47D06, 35R20.}}

\section{Introduction}

In this article, we investigate vector-valued toroidal pseudodifferential operators, their mapping properties, and generation of semigroups in Besov and Sobolev spaces. Pseudodifferential operators on the $n$-dimensional torus $\T^n = (\R/2\pi \Z)^n$ can be treated by the toroidal quantization, see the monograph of Ruzhansky and Turunen \cite{ruzhansky-turunen10} as a standard reference. Here, the Fourier series approach to periodic pseudodifferential operators allows a global quantization with covariable $\bk\in\Z^n$, in contrast to the standard quantization on closed manifolds which yields local symbols with covariable $\xi\in\R^n$. Starting from the 1970's, the theory of toroidal pseudodifferential operators was developed by, e.g.,  Agranovich \cite{agranovich79}, Amosov \cite{amosov88}, McLean \cite{mclean91}, Melo \cite{melo97}, Saranen--Wendland \cite{saranen-wendland87},  Ruzhansky--Turunen \cite{ruzhansky-turunen07, ruzhansky-turunen09}, Turunen \cite{turunen00}, and Turunen--Vainikko \cite{turunen-vainikko98}. The mapping properties of toroidal pseudodifferential operators in $L^p$-spaces were studied by Delgado \cite{delgado13}, Molahajloo--Shahla--Wong \cite{molahajloo-shahla-wong10}, Wong \cite{wong11}, Cardona \cite{cardona14} and others. Here, the symbols of the considered pseudodifferential operators were scalar-valued, while in the present paper we will consider operator-valued symbols. To our knowledge, up to now no general results on periodic pseudodifferential operators in the vector-valued situation are available.

Pseudodifferential operators with operator-valued symbols appear, for instance, if the unknown function contains an additional parameter and therefore the value of the symbol belongs to some function space in this parameter. This is the case in coagulation-fragmentation problems where the additional parameter describes the cluster size (see Amann--Walker \cite{amann-walker05} and the references therein). On the other hand, operator-valued symbols can be used to solve elliptic and parabolic problems in cylindrical domains where the unknown function is considered as a function of the cylindrical variable with values in a function space in the cross-section \textmd{of the cylinder}. This approach was used, e.g., by Denk--Nau \cite{denk-nau13}, Favini--Giudetti-Yakubov \cite{favini-giudetti-yakubov11}, Nau--Saal \cite{nau-saal12}, and Rabinovich \cite{rabinovich11}. For an application to the Stokes system, see Denk--Seiler \cite{denk-seiler15}.

In the (periodic) vector-valued case, results are easier to obtain and sharper if the underlying Banach space has the geometric property of being a UMD space. In this case, Fourier multiplier results by Arendt--Bu \cite{arendt-bu02, arendt-bu04} and Bu--Kim \cite{bu-kim04} can be applied to obtain continuity in vector-valued $L^p$-Sobolev spaces (see Denk--Nau \cite{denk-nau13} in the case of differential operators). However, the restriction to UMD spaces excludes natural state spaces as $L^1$, and therefore we consider periodic pseudodifferential operators with operator-valued symbols acting in arbitrary Banach spaces. This can also be seen as a continuation of our papers  \cite{barraza-denk-monzon14}, \cite{barraza-denk-monzon14a} where we considered the non-periodic vector-valued case.

In the present paper, \textmd{we state as a first main result  } the continuity of operator-valued toroidal pseudodifferential operators in Besov spaces (Theorem~\ref{3.16} below), after having introduced the setting and some properties of vector-valued distributions on the torus in Section~2. For the definition of vector-valued Besov spaces, we use the nowadays standard method of dyadic decomposition. As mentioned above, we have no restriction on the underlying Banach space $E$ and we include $p,q\in\{1,\infty\}$. From the continuity in Besov spaces, we obtain that parabolic pseudodifferential operators generate analytic semigroups in the Besov scale $B_{pq}^s(\T^n,E)$ (Corollary~\ref{4.3}). A much deeper question concerns the generation of analytic semigroups in the Sobolev space $W_p^k(\T^n,E)$. The main result of this paper, Theorem~\ref{4.10}, gives an affirmative answer. For the proof of the resolvent estimate, we cannot apply Michlin type results as stated in Arendt-Bu \cite{arendt-bu04} as we did not assume a UMD space. However, we could establish kernel estimates by a careful analysis of parameter-dependent sums (Lemma~\ref{4.7}) which are the key to prove the generation of an analytic semigroup in Sobolev spaces. Based on Theorem~\ref{4.10} and on results by Amann \cite{amann95}, we obtain well-posedness for abstract parabolic non-autonomous Cauchy problems in Sobolev spaces (Theorem~\ref{4.12}).

\section{Vector-valued distributions on the torus and the Fourier transform}

Throughout this article, $E$ stands for an arbitrary Banach space with norm $\|\cdot\|$, $n\in\N$ is fixed, $\langle x\rangle := (1+|x|^2)^{1/2}$ for $x\in\R^n$ where $|\cdot|$ denotes the euclidian norm on $\R^n$.

In the present section, we define the main spaces of vector-valued functions and distributions on the $n$-dimensional torus $\T^n := \textmd{(\R/2\pi\Z)^n}$, generalizing the scalar theory given, e.g., in Chapter~3 of \cite{ruzhansky-turunen10}. \textmd{As a set of representatives for $\T^n$, we choose $[-\pi,\pi]^n$. Note that in this case the distance of an element $x\in\T^n$ to the origin is given by the euclidian norm $|x|$.}

For $m\in\N_0\cup\{\infty\}$, let $C^m(\T^n,E)$ denote the space of all $m$-times continuously differentiable functions $\ph\colon \T^n\to E$. We remark that a function $\ph\colon\T^n\to E$ can be identified with a function $\ph\colon\R^n\to E$ which is $2\pi$-periodic in all variables. We will tacitly use this identification in the following. As usual, $C^{\textmd{\infty}}(\T^n,E)$ is endowed with the locally convex topology which is induced by the family of seminorms $\{q_N: N\in\N_0\}$ given by
\[ q_N(\ph) := \max_{|\alpha|\le N}\sup_{x\in\T^n} \|\partial^\alpha \ph(x)\|\quad (\ph\in C^\infty(\T^n,E)).\]
Here and in the following, we use the standard multi-index notation.
By this construction, $C^{\textmd{\infty}}(\T^n,E)$ becomes a Fr\'{e}chet space. We write $C^m(\T^n):= C^m(\T^n,\C)$ for $m\in\N_0\cup\{\infty\}$.

The space $\mathscr D'(\T^n,E):= L(C^\infty(\T^n), E)$ is called the space of all $E$-valued toroidal distributions and is endowed with the weak-*-topology induced by \textmd{the family of  seminorms $\{\tilde q_\ph: \ph\in C^\infty(\T^n)\}$ with }
\[ \tilde q_\ph(u) := \|\langle u,\ph\rangle\|\quad (u\in\mathscr D'(\T^n,E)).\]
Here we have written $\langle u,\ph\rangle := u(\ph)$.

On the dual space $\Z^n$ of $\T^n$, we will consider the space $\mathscr S(\Z^n,E)$ of rapidly decreasing functions, i.e. of functions $\ph\colon\Z^n\to E$ such that for every $N\in\N_0$ we have
\[ p_N(\ph) := \sup_{\bk\in\Z^n} \langle \bk\rangle ^N \|\ph(\bk)\| <\infty.\]
In a natural way, $\mathscr S(\Z^n,E)$ is a Fr\'{e}chet space again. We define $\mathscr S'(\Z^n,E):= L(\mathscr S(\Z^n),E)$, the space of $E$-valued tempered distributions, again endowed with the weak-*-topology. We say that a function $f\colon\Z^n\to E$ has at most polynomial growth if there exist constants $C\ge 0$ and $M\in\N_0$ such that
\[ \|f(\bk)\| \le C\langle \bk\rangle ^M \quad (\bk\in\Z^n).\]
The space of all such functions is denoted by $\mathcal O(\Z^n,E)$. As in the scalar case (see \cite{ruzhansky-turunen10}, Exercise 3.1.7), one easily gets the following identification.

\begin{lemma}
  \label{2.1}
  For $u\in\mathscr S'(\R^n,E)$ define $\textmd{g}_u(\bk) := \langle u,\delta_{\bk}\rangle\;(\bk\in\Z^n)$ where
  \[\delta_{\bk}({\boldsymbol \ell}) := \delta_{\bk\boldsymbol\ell} := \begin{cases}
    1,& \bk = \boldsymbol\ell,\\ 0, & \bk\not=\boldsymbol\ell.
  \end{cases}\]
  Then the linear map $u\mapsto \textmd{g}_u,\; \mathscr S'(\Z^n,E)\to \mathcal O(\Z^n,E)$ is bijective. In particular, we have for all $u\in\mathscr S'(\Z^n,E)$
  \begin{equation}
    \label{2-1}
    \langle u,\ph\rangle = \sum_{\bk\in\Z^n} \ph(\bk) \textmd{g}_u(\bk)\quad (\ph\in\mathscr S(\Z^n)),
  \end{equation}
 \textmd{ where the sum on the right-hand side is absolutely convergent.}
\end{lemma}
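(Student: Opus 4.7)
The plan is to reduce everything to the elementary fact that, for each $\ph\in\mathscr S(\Z^n)$, the Kronecker expansion $\ph=\sum_{\bk\in\Z^n}\ph(\bk)\delta_\bk$ converges in the Fr\'echet topology of $\mathscr S(\Z^n)$. Once this is available, the representation formula \eqref{2-1} follows by pulling $u$ inside the sum via linearity and continuity, and both bijectivity of $u\mapsto g_u$ and the polynomial growth of $g_u$ fall out from standard seminorm bookkeeping.

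First I would establish that $g_u\in\mathcal O(\Z^n,E)$. Continuity of $u\colon\mathscr S(\Z^n)\to E$ supplies constants $C\ge 0$ and $N\in\N_0$ with $\|\langle u,\psi\rangle\|\le C\,p_N(\psi)$ for all $\psi\in\mathscr S(\Z^n)$; evaluation at $\psi=\delta_\bk$ gives $p_N(\delta_\bk)=\langle\bk\rangle^N$, hence $\|g_u(\bk)\|\le C\langle\bk\rangle^N$. Next I would verify the convergence of the Kronecker expansion: for any $M\in\N_0$,
\[
p_M\Bigl(\ph-\sum_{|\bk|\le R}\ph(\bk)\delta_\bk\Bigr)
=\sup_{|\boldsymbol\ell|>R}\langle\boldsymbol\ell\rangle^{M}|\ph(\boldsymbol\ell)|
\le\langle R\rangle^{-1}p_{M+1}(\ph)\xrightarrow[R\to\infty]{}0.
\]
Applying $u$ termwise to the partial sums then yields \eqref{2-1}, and absolute convergence of the right-hand side follows from $|\ph(\bk)|\le p_{N+n+1}(\ph)\langle\bk\rangle^{-(N+n+1)}$ combined with the polynomial bound on $g_u$.

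Formula \eqref{2-1} gives injectivity of $u\mapsto g_u$ immediately: if $g_u\equiv 0$ then $\langle u,\ph\rangle=0$ for every $\ph\in\mathscr S(\Z^n)$. For surjectivity, given $g\in\mathcal O(\Z^n,E)$ with $\|g(\bk)\|\le C\langle\bk\rangle^M$, I define $u(\ph):=\sum_{\bk\in\Z^n}\ph(\bk)g(\bk)$; the bound $\|\ph(\bk)g(\bk)\|\le C\,p_{M+n+1}(\ph)\,\langle\bk\rangle^{-(n+1)}$ makes the series absolutely convergent and provides $\|u(\ph)\|\le C'\,p_{M+n+1}(\ph)$, so $u\in\mathscr S'(\Z^n,E)$, with $u(\delta_\bk)=g(\bk)$ trivially. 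The only non-routine ingredient is the Fr\'echet convergence of the Kronecker expansion, which legitimizes interchanging $u$ with the summation; once that is secured, the remaining estimates are purely mechanical, which is why the scalar argument from \cite{ruzhansky-turunen10} transfers verbatim to the Banach-space valued setting without any geometric assumption on $E$.
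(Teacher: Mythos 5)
Your proposal is correct and follows essentially the same route as the paper's proof: establish the polynomial bound on $g_u$ from continuity of $u$, derive \eqref{2-1}, read off injectivity, and construct the inverse directly for surjectivity. The paper elides the verification of \eqref{2-1} with an ``easily obtain''; you fill that gap cleanly by proving Fr\'echet convergence of the Kronecker expansion $\ph=\sum_{\bk}\ph(\bk)\delta_\bk$ in $\mathscr S(\Z^n)$, which is exactly the missing ingredient.
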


\begin{proof}
  Let $u\in\mathscr S'(\Z^n,E)$. By continuity of $u$, there exist $M\in\N_0$ and $C>0$ such that
  \[ \|\textmd{g}_u(\bk)\|=\|\langle u,\delta_{\bk}\rangle\| \le Cp_M(\delta_\bk) = C \langle\bk\rangle^M\quad (\bk\in\Z^n).\]
  This shows $\textmd{g}_u\in\mathcal O(\Z^n,E)$. Now we easily obtain \eqref{2-1}, where the convergence of the series follows from $\textmd{g}_u\in\mathcal O(\Z^n,E)$ and $\ph\in \mathscr S(\Z^n)$. Due to \eqref{2-1}, the map $u\mapsto \textmd{g}_u$ is injective. Finally, a straightforward computation shows that for every $\textmd{g}\in \mathcal O(\Z^n,E)$ the definition
  \[ \textmd{\langle u_{\textmd{g}},\ph\rangle} := \sum_{\bk\in\Z^n} \ph(\bk) \textmd{g}(\bk)\quad (\ph\in\mathscr S(\Z^n))\]
  gives an element $u_{\textmd{g}}\in\mathscr S'(\Z^n,E)$ which shows surjectivity of this map.
\end{proof}

The last lemma shows that tempered distributions on $\Z^n$ are in fact functions, in contrast to the $\R^n$-case.

For $f\in C^\infty(\T^n,E)$, we define the toroidal Fourier transform of $f$, denoted by $\FT f$,  as
 \begin{equation}\label{2-2}
 (\FT f)(\bk):=\int_{\T^n}e^{-i\bk \cdot x}f(x)\,\dbar x = \int_{\textmd{[-\pi,\pi]^n}}e^{-i\bk \cdot x}f(x)\,\dbar x \quad (\bk\in\Z^n),
\end{equation}
where $\dbar x:=(2\pi)^{-n} dx$.
Note that \textmd{the }  integral in \eqref{2-2} is understood as a Bochner integral and its value is an element of $E$.

In the same way as in the scalar case, one can show the following results.

\begin{lemma}
  \label{2.2}
  a) The Fourier transform $\FT\colon C^\infty(\T^n,E)\to \mathscr S(\Z^n,E)$ is an isomorphism of Fr\'{e}chet spaces, i.e. a linear and continuous bijection with continuous inverse. Its inverse is given by
  \[ (\FT^{-1} \ph)(x) = \sum_{\bk\in\Z^n} e^{i\bk\cdot x} \ph(\bk)\quad (x\in\T^n,\, \ph\in\mathscr S(\Z^n,E)).\]

  b) Let $D^\alpha := (-i)^{|\alpha|}\partial^\alpha$. Then
  \[ \big[\FT(D^\alpha\ph)](\bk) = \bk^\alpha (\FT\ph)(\bk)\quad (\bk\in\Z^n)\]
  for all $\alpha\in\N_0^n$ and $\ph\in C^\infty(\T^n,E)$.
\end{lemma}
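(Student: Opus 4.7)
My plan is to establish part (b) first and then use it as the main ingredient for part (a); the strategy for (a) splits into continuity of $\FT$, construction and continuity of a candidate inverse $G$, and verification of the two inversion identities $\FT\circ G=\id$ and $G\circ\FT=\id$.

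For (b) I would argue by iterated integration by parts on the fundamental domain $[-\pi,\pi]^n$. Since $D_{x_j}e^{-i\bk\cdot x}=-k_j e^{-i\bk\cdot x}$, integrating by parts under the Bochner integral (which is legitimate because the integrand is $C^\infty$ in $x$) transfers $D^\alpha$ from $\ph$ onto the exponential with combined sign factor $(-1)^{|\alpha|}(-1)^{|\alpha|}=1$, while all boundary contributions cancel pairwise by $2\pi$-periodicity of $\ph$ and of its derivatives. No new issues arise in the vector-valued case since every step is linear in $\ph$.

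For (a), Step~1 combines (b) with the trivial bound $\|\FT\psi(\bk)\|\le q_0(\psi)$ to give $\|\bk^\alpha(\FT\ph)(\bk)\|\le q_{|\alpha|}(\ph)$. Expressing $\langle\bk\rangle^{2N}$ as a finite linear combination of monomials $\bk^{2\beta}$ with $|\beta|\le N$ then yields $p_N(\FT\ph)\le C_N q_{2N}(\ph)$, so $\FT$ maps $C^\infty(\T^n,E)$ continuously into $\mathscr S(\Z^n,E)$. For Step~2 I would define
\[ (G\ph)(x):=\sum_{\bk\in\Z^n}e^{i\bk\cdot x}\ph(\bk)\quad (\ph\in\mathscr S(\Z^n,E)). \]
Rapid decrease of $\ph$ makes every formally differentiated series converge absolutely and uniformly, so $G\ph\in C^\infty(\T^n,E)$ with $q_N(G\ph)\le C_N' p_{N+n+1}(\ph)$; continuity of $G$ follows. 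Step~3, the identity $\FT(G\ph)=\ph$ on $\mathscr S(\Z^n,E)$, is a direct calculation: exchanging the absolutely convergent series with the integral and invoking the orthogonality relation $\int_{\T^n}e^{i(\boldsymbol\ell-\bk)\cdot x}\,\dbar x=\delta_{\bk\boldsymbol\ell}$ gives the claim.

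The main obstacle is Step~4, namely $G(\FT f)=f$ for $f\in C^\infty(\T^n,E)$, which is the Banach-space valued Fourier inversion on $\T^n$. I would reduce it to the scalar case by duality: for any $\xi\in E^*$ the scalar function $\xi\circ f$ lies in $C^\infty(\T^n)$ and satisfies $\FT(\xi\circ f)(\bk)=\xi((\FT f)(\bk))$, because continuous linear functionals commute with Bochner integrals. Scalar Fr\'{e}chet-space Fourier inversion on the torus then gives $\xi(f(x))=\sum_\bk e^{i\bk\cdot x}\FT(\xi\circ f)(\bk)=\xi((G(\FT f))(x))$, so the Hahn--Banach theorem forces $f=G(\FT f)$. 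Pushing $\xi$ through the series defining $G$ is justified by dominated convergence, since the partial sums are bounded in $E$-norm by $p_{n+1}(\FT f)\sum_{\bk}\langle\bk\rangle^{-n-1}<\infty$. Combined with Steps~1--3 and continuity in both directions, this identifies $\FT$ and $G$ as mutually inverse isomorphisms of Fr\'{e}chet spaces, completing (a).
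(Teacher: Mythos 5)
Your proposal is correct; the paper itself gives no proof of Lemma~2.2 (it simply says the result follows ``in the same way as in the scalar case''), and your argument is a faithful and complete elaboration of exactly that route: iterated integration by parts with cancelling boundary terms for (b), the resulting polynomial-decay estimate for continuity of $\FT$, term-by-term differentiation of the absolutely and uniformly convergent series for continuity of the candidate inverse $G$, the orthogonality relation $\int_{\T^n}e^{i(\boldsymbol\ell-\bk)\cdot x}\,\dbar x=\delta_{\bk\boldsymbol\ell}$ for $\FT\circ G=\id$, and a Hahn--Banach reduction to the scalar torus inversion for $G\circ\FT=\id$. The duality step is the natural way to handle the Banach-space-valued inversion without re-proving a vector-valued uniqueness theorem for Fourier coefficients; one minor remark is that ``dominated convergence'' is not really needed to pass $\xi$ through the series --- norm convergence of the partial sums in $E$ together with continuity and linearity of $\xi$ already suffices.
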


For $f\in C^\infty(\T^n,E)$, the associated regular distribution $u_f\in\mathscr D'(\T^n,E)$ is given by
\[ \textmd{\langle u_f,\ph\rangle} := \int_{\T^n} \ph(x) f(x)\dbar x\quad (\ph\in C^\infty(\T^n)).\]
By  Lemma~\ref{2.1}, $\FT f\in\mathscr S(\Z^n,E)$ can be considered as an element of $\mathscr S'(\Z^n,E)$ as it is polynomially bounded.  For $\ph\in\mathscr S(\Z^n)$ we obtain
\begin{align*}
  \langle \FT f,\ph\rangle & = \sum_{\bk\in\Z^n} \ph(\bk)(\FT f)(\bk) = \sum_{\bk\in\Z^n} \int_{\T^n} e^{-i\bk\cdot x}\ph(\bk)f(x)\,\dbar x\\
  & = \int_{\T^n} \Big(\sum_{\bk\in\Z^n} \ph(\bk) e^{-i\bk\cdot x}\Big)f(x)\,\dbar x\\
  & = \int_{\T^n} (\FT^{-1}\ph)(-x)f(x)\,\dbar x = \big\langle u_f, (\FT^{-1}\ph)(-\,\cdot\,)\big\rangle.
\end{align*}
This motivates the definition of the  Fourier transform of a toroidal  distribution.

\begin{definition}\label{2.3}
For $u\in\mathscr {D}'(\T^n,E)$, the Fourier transform of $u$ is defined by
\[
\langle \FT u , \varphi \rangle := \bigl\langle u , (\FT^{-1}\varphi)(-\,\cdot\,) \bigr\rangle\quad (\varphi\in\mathscr S(\Z^n)).
\]
\end{definition}

\begin{remark}\label{2.4}
By Lemma~\ref{2.2} a) and the definition of the topologies, one immediately sees that
\[ \FT\colon \mathscr D'(\T^n,E)\to \mathscr S'(\Z^n,E)\]
is linear, continuous and bijective with continuous inverse. For $v\in\mathscr S'(\Z^n,E)$, the inverse is given by
\[ \textmd{\langle \FT^{-1} v,\ph\rangle =} \big\langle v,(\FT\ph)(-\,\cdot\,)\big\rangle \quad (\ph\in C^\infty(\T^n)).\]
\end{remark}

\begin{definition}
  \label{2.5}
  For $u\in\mathscr D'(\T^n,E)$, we define the Fourier coefficients $(\hat u(\bk))_{\bk\in\Z^n}$ by
  \[ \hat u(\bk) := \textmd{\langle u, e_{-\bk}\rangle} \quad (\bk\in\Z^n),\]
  where $e_{-\bk}\in C^\infty(\T^n)$ is given by $e_{-\bk}(x) := e^{-i\bk\cdot x}\;(x\in\T^n)$.
\end{definition}

\begin{remark}
  \label{2.6}
  Let $u\in\mathscr D'(\T^n,E)$.
  Identifying again $\mathscr S'(\Z^n,E)$ and $\mathcal O(\Z^n,E)$, we can write
  \begin{align*}
    \hat u(\bk) & = \textmd{\langle u, e_{-\bk}\rangle} = \textmd{\big\langle \FT^{-1}\FT u,e_{-\bk}\big\rangle}
    = \big\langle \FT u, (\FT e_{-\bk})(-\,\cdot\,)\big\rangle \\
    & = \sum_{\boldsymbol \ell\in\Z^n} (\FT e_{-\bk})(-\boldsymbol\ell) (\FT u)(\boldsymbol \ell) = (\FT u)(\bk),
  \end{align*}
  where we used in the last step that $(\FT e_{-\bk})(-\boldsymbol\ell) =\int_{\T^n} e^{i(\boldsymbol\ell-\bk)\cdot x} \dbar x = \delta_{\bk,\boldsymbol\ell}$.
  Therefore, the $\bk$-th Fourier coefficient $\hat u(\bk)$ is given by the value of $\FT u$, considered as a function on $\Z^n$ with polynomial growth, at the point $\bk\in\Z^n$. Note that this holds, in particular, for $u\in C^\infty(\T^n,E)$, considered as a regular distribution.
\end{remark}

\begin{definition}
  \label{2.7}
For $\psi\in\CT$, $v\in\mathscr {D}'(\T^n)$ and $z\in E$, the tensor products $\psi\otimes z$ and $v\otimes z$ are defined by
\begin{align*}
  (\psi\otimes z)(x) & := \psi(x)z\quad (x\in\T^n),\\
  (v\otimes z)(\ph) & := \textmd{\langle v,\ph\rangle} z\quad (\ph\in C^\infty(\T^n)).
\end{align*}
\end{definition}

It is straightforward to prove that $\psi \otimes z\in C^\infty(\T^n,E)$ and $v\otimes z\in\mathscr D'(\T^n,E)$. The following result shows that the series appearing in the inversion \textmd{formulae } (see Lemma~\ref{2.2} a) and Remark~\ref{2.4}, respectively) also converge in the corresponding locally convex topology.

\begin{lemma}
  \label{2.8}
  a) For all $f\in C^\infty(\T^n,E)$ we have
  \begin{equation}
    \label{2-3}
    f = \sum_{\bk\in\Z^n} e_\bk \otimes \hat f(\bk) \quad \text{ in } C^\infty(\T^n,E).
  \end{equation}

  b)   For all $u\in \mathscr D'(\T^n,E)$ we have (identifying $e_\bk$ and the induced regular distribution)
  \begin{equation}
    \label{2-4}
    u = \sum_{\bk\in\Z^n} e_\bk\otimes \hat u(\bk)\quad \text{ in }\mathscr D'(\T^n,E).
  \end{equation}
  \end{lemma}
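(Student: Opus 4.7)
The strategy for both parts is to reduce the convergence of the formal Fourier series to a continuity statement for $\FT^{-1}$. For each $M\in\N_0$ I set $\hat f_M(\bk) := \hat f(\bk)$ if $|\bk|\le M$ and $0$ otherwise, and analogously $\hat u_M$ in part (b), and then check that the inverse Fourier transform of the truncated sequence equals the $M$-th partial sum of the series in question. Convergence of the truncations on the Fourier side then transfers to convergence of the partial sums by continuity of $\FT^{-1}$ on the relevant space.

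For part (a), since $\hat f = \FT f \in \mathscr S(\Z^n,E)$ by Lemma \ref{2.2}(a), rapid decay yields, for every seminorm $p_N$,
\[ p_N(\hat f - \hat f_M) = \sup_{|\bk|>M}\langle\bk\rangle^N\|\hat f(\bk)\| \longrightarrow 0 \quad (M\to\infty), \]
so $\hat f_M\to\hat f$ in $\mathscr S(\Z^n,E)$. A direct computation from the formula of Lemma~\ref{2.2}(a) gives $\FT^{-1}\hat f_M = \sum_{|\bk|\le M} e_\bk\otimes\hat f(\bk)$. Applying the continuous map $\FT^{-1}\colon\mathscr S(\Z^n,E)\to\CTE$ from Lemma~\ref{2.2}(a) yields $\sum_{|\bk|\le M}e_\bk\otimes\hat f(\bk)\to \FT^{-1}\hat f = f$ in $\CTE$, which is \eqref{2-3}.

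For part (b), I identify $\FT u$ with $\hat u\in\mathcal O(\Z^n,E)$ via Lemma~\ref{2.1}, so $\|\hat u(\bk)\|\le C\langle\bk\rangle^{M_0}$ for suitable $C\ge 0$ and $M_0\in\N_0$. For every $\ph\in\mathscr S(\Z^n)$, the rapid decay of $\ph$ gives absolute convergence of $\sum_\bk \ph(\bk)\hat u(\bk)$, hence
\[ \langle\hat u-\hat u_M,\ph\rangle = \sum_{|\bk|>M}\ph(\bk)\hat u(\bk)\longrightarrow 0, \]
i.e. $\hat u_M\to \hat u$ in the weak-$*$ topology of $\mathscr S'(\Z^n,E)$. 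Verifying via Definition~\ref{2.3} that $\FT^{-1}\hat u_M = \sum_{|\bk|\le M}e_\bk\otimes\hat u(\bk)$ in $\mathscr D'(\T^n,E)$ and invoking the continuous isomorphism $\FT^{-1}\colon\mathscr S'(\Z^n,E)\to\mathscr D'(\T^n,E)$ from Remark~\ref{2.4} then gives \eqref{2-4}.

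The single point requiring some care is the identity $\FT^{-1}(\delta_\bk\otimes z)=e_\bk\otimes z$ that underlies both computations $\FT^{-1}\hat f_M=S_M$ and $\FT^{-1}\hat u_M=S_M^u$: testing against $\ph\in\CT$, Definition~\ref{2.3} and Lemma~\ref{2.1} give $\langle\FT^{-1}(\delta_\bk\otimes z),\ph\rangle = \langle\delta_\bk\otimes z,(\FT\ph)(-\,\cdot\,)\rangle = (\FT\ph)(-\bk)z$, and this equals $\int_{\T^n}\ph(x)e^{i\bk\cdot x}\dbar x\cdot z = \langle e_\bk\otimes z,\ph\rangle$. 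Beyond this bookkeeping, I do not expect any real obstacle, since the hard content (continuity of $\FT$ and $\FT^{-1}$ in Lemma~\ref{2.2} and Remark~\ref{2.4}, and the identification in Lemma~\ref{2.1}) has already been established.
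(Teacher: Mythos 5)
Your proposal is correct. For part~a) the core estimate is the same as the paper's: both exploit rapid decay of $\hat f\in\mathscr S(\Z^n,E)$ to kill the tail of the Fourier series, and then invert. The paper estimates $q_N(f-f_R)$ directly by $\sum_{|\bk|>R}\langle\bk\rangle^N\|\hat f(\bk)\|$, while you package the same information as $p_N(\hat f-\hat f_M)\to 0$ in $\mathscr S(\Z^n,E)$ and then invoke continuity of $\FT^{-1}\colon\mathscr S(\Z^n,E)\to C^\infty(\T^n,E)$ from Lemma~\ref{2.2}a); this is a mild repackaging rather than a new idea.

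For part~b) your route is genuinely different. The paper proves b) \emph{via} a): it rewrites $\sum_{|\bk|\le R}\langle e_\bk\otimes\hat u(\bk),\ph\rangle$ as $\langle u,\ph_R\rangle$ using the Parseval-type identity $\langle e_\bk,\ph\rangle=(\FT\ph)(-\bk)$, then lets $\ph_R\to\ph$ in $C^\infty(\T^n)$ by part~a) and uses continuity of the distribution $u$. You instead show $\hat u_M\to\hat u$ weak-$*$ in $\mathscr S'(\Z^n,E)$ directly from absolute convergence of $\sum_\bk\ph(\bk)\hat u(\bk)$ (polynomial growth of $\hat u$ against rapid decay of $\ph$) and then apply the weak-$*$ continuity of $\FT^{-1}\colon\mathscr S'(\Z^n,E)\to\mathscr D'(\T^n,E)$ from Remark~\ref{2.4}, which is automatic since $\FT^{-1}$ is defined as the adjoint of the continuous map $\ph\mapsto(\FT\ph)(-\,\cdot\,)$. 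Unwinding your argument one sees it tests against exactly the same pairing $\sum_\bk(\FT\ph)(-\bk)\hat u(\bk)$ as the paper's, but it avoids using a), makes both parts run on the single principle ``truncate on the Fourier side and transfer by continuity of $\FT^{-1}$'', and for b) in particular replaces the need for the stronger $C^\infty$-convergence of a) by the weaker, directly verifiable absolute convergence of the dual pairing. Your identity $\FT^{-1}(\delta_\bk\otimes z)=e_\bk\otimes z$ is checked correctly and is indeed the only bookkeeping step that needs care.
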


\begin{proof}
a) Let $f\in C^\infty(\T^n,E)$. For $R>0$, define $f_R := \sum_{|\bk|\le R} e_\bk\otimes \hat f(\bk)$. We have to show that
\[ p_N(f-f_R) = \max_{|\alpha|\le N} \sup_{x\in\T^n} \big\| D^\alpha (f-f_R)\textmd{(x)}\big\| \to 0 \quad (R\to\infty)\]
for all $N\in\N_0$. For this, we apply Lemma~\ref{2.2} and write
\[ D^\alpha f(x) = \sum_{\bk\in\Z^n} e^{i\bk\cdot x} [\FT (D^\alpha f)](\bk) = \sum_{\bk\in\Z^n} e^{i\bk\cdot x} \bk^\alpha \hat f(\bk).\]
In the same way, we have $D^\alpha (e_\bk\otimes \hat f(\bk))(x) = e^{i\bk\cdot x} \bk^\alpha \hat f(\bk)$. Therefore,
\begin{align*}
  p_N(f-f_R) & = \max_{|\alpha|\le N} \sup_{x\in\T^n} \Big\| \sum_{|\bk|>R} e^{i\bk\cdot x} \bk^\alpha \hat f(\bk)\Big\| \\
  & \le \max_{|\alpha|\le N} \sum_{|\bk|>R} |\bk^\alpha| \,\|\hat f(\bk)\| \le \sum_{|\bk|>R} \langle \bk\rangle^N \|\hat f(\bk)\|.
\end{align*}
As $\FT f\in\mathscr S(\Z^n,E)$, the last sum converges to zero for $R\to\infty$ for every fixed $N\in\N_0$.

b) Let $u\in\mathscr D'(\T^n,E)$, $\ph\in C^\infty(\T^n)$ and set again $\ph_R := \sum_{|\bk|\le R} e_\bk\otimes \hat \ph(\bk)$ for $R>0$. For $\bk\in\Z^n$ we obtain
\[ \textmd{\langle u,e_\bk\otimes \hat \ph(\bk)\rangle}  = \textmd{\langle u,\hat\ph(\bk)e^{i\bk\,\cdot\,}\rangle}= \hat\ph(\bk) \textmd{\langle u,e^{i\bk\,\cdot\,}\rangle}  = \hat\ph(\bk)\hat u(-\bk).\]
On the other hand, due to
\[ \textmd{\langle e_\bk,\ph\rangle}  = \int_{\T^n} e^{ix\cdot\bk} \ph(x)\dbar x = (\FT \ph)(-\bk),\]
we see that $\textmd{\langle e_\bk\otimes \hat u(\bk),\ph\rangle} = \hat\ph(-\bk)\hat u(\bk)$. This gives
\[ \textmd{\sum_{|\bk|\le R} \langle e_\bk\otimes\hat u(\bk),\ph\rangle = \sum_{|\bk|\le R} \hat\ph(-\bk)\hat u(\bk) = \sum_{|\bk|\le R} \hat\ph(\bk) \hat u(-\bk) = \langle u, \ph_R\rangle.}\]
Taking now $R\to\infty$, the statement follows from part a) and the continuity of $u$.
\end{proof}

We are now going to consider pseudodifferential operators on the torus.
For an operator-valued function $a\in \mathcal O(\Z^n,L(E))$ and $v\in \mathcal O(\Z^n,E)$, we define
\[ \langle av,\ph\rangle := \sum_{\bk\in\Z^n} \ph(\bk)a(\bk)v(\bk)\quad (\ph\in\mathscr S(\Z^n)).\]
Due to Lemma~\ref{2.1} and the fact that products of polynomially bounded functions are again polynomially bounded, we see that $av\in\mathscr S'(\Z^n,E)$. Therefore, for all $u\in\mathscr D'(\T^n,E)$ and $a\in\mathcal O(\Z^n,L(E))$, we obtain $a \FT u\in \mathscr S'(\Z^n,E)$, and the following definition makes sense.

\begin{definition}
  \label{2.9}
  For $a\in\mathcal O(\Z^n,L(E))$, we define $\textmd{\op[a]}\colon \mathscr D'(\T^n,E)\to \mathscr D'(\T^n,E)$ by
  \[  \textmd{\op[a]u} := \FT^{-1} \big( a\FT u)\quad (u\in \mathscr D'(\T^n,E)).\]
  We call $\textmd{\op[a]}$ the toroidal pseudodifferential operator associated to the discrete symbol $a$. If we want to distinguish the toroidal case from the whole space case, we write more precisely $\op_{\T^n} [a]$.

\textmd{  If a function $\tilde a\colon \R^n\to L(E)$ with $\tilde a|_{\Z^n} \in \mathcal O(\Z^n,L(E))$ is given, we will also write $\op[\tilde a]$ instead of $\op[\tilde a|_{\Z^n}]$.
}
\end{definition}

\begin{lemma}
  \label{2.10}
  a) For $a\in\mathcal O(\Z^n,L(E))$, we have \textmd{$\op[a]\in L(C^\infty(\T^n,E))$.}

  b)  For $a\in\mathcal O(\Z^n,L(E))$ and $u\in\mathscr D'(\T^n,E)$, we have
  \[ \textmd{\op[a]} u = \sum_{\bk\in\Z^n} e_\bk \otimes a(\bk)\hat u(\bk)\quad \text{ in }\mathscr D'(\T^n,E).\]
\end{lemma}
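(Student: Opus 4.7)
The plan for (a) is to write $\op[a]$ as the composition $\FT^{-1} \circ M_a \circ \FT$, where $M_a$ denotes pointwise multiplication by $a$. By Lemma~\ref{2.2}\,a), $\FT \colon C^\infty(\T^n,E) \to \mathcal S(\Z^n,E)$ is a topological isomorphism of Fr\'echet spaces, so it suffices to show that $M_a$ maps $\mathcal S(\Z^n,E)$ continuously into itself. This follows at once from the polynomial bound on $a$: if $\|a(\bk)\| \le C\langle\bk\rangle^M$ for all $\bk\in\Z^n$, then for every $N\in\N_0$ and every $v\in\mathcal S(\Z^n,E)$ one has
\[ p_N(av) = \sup_{\bk\in\Z^n} \langle\bk\rangle^N \|a(\bk)v(\bk)\| \le C\, p_{N+M}(v).\]
Composing three continuous maps then yields $\op[a]\in L(C^\infty(\T^n,E))$.

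For (b), I would apply Lemma~\ref{2.8}\,b) to the distribution $\op[a]u\in\mathscr D'(\T^n,E)$ to obtain
\[ \op[a]u = \sum_{\bk\in\Z^n} e_\bk \otimes \widehat{\op[a]u}(\bk) \quad \text{in } \mathscr D'(\T^n,E),\]
and then identify the Fourier coefficients. By Remark~\ref{2.6}, $\widehat{\op[a]u}(\bk)$ coincides with the value at $\bk$ of $\FT(\op[a]u)$ viewed as an element of $\mathcal O(\Z^n,E)$. Since $\op[a]u = \FT^{-1}(a\FT u)$ by definition, the bijectivity in Remark~\ref{2.4} gives $\FT(\op[a]u) = a\FT u$ in $\mathscr S'(\Z^n,E)$; evaluating at $\bk$ and once more invoking Remark~\ref{2.6}, this time for $u$, yields $\widehat{\op[a]u}(\bk) = a(\bk)\hat u(\bk)$, as required.

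No serious obstacle is expected: both assertions are essentially bookkeeping once one has the Fr\'echet space isomorphism of Lemma~\ref{2.2}\,a), the identification $\mathscr S'(\Z^n,E) \cong \mathcal O(\Z^n,E)$ from Lemma~\ref{2.1}, and the convergent Fourier representation of Lemma~\ref{2.8}. The only point that requires care is to keep track of whether objects live in $\mathcal S(\Z^n,E)$ or in $\mathscr S'(\Z^n,E)\cong \mathcal O(\Z^n,E)$, and in particular to verify that the multiplication $av$ introduced before Definition~\ref{2.9} restricts to the pointwise product $M_a$ used in part (a) whenever $v\in\mathcal S(\Z^n,E)$; this is immediate from the definitions.
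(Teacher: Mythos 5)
Your proof is correct and follows essentially the same route as the paper: part (a) is proved by establishing continuity of pointwise multiplication $v\mapsto av$ on $\mathscr S(\Z^n,E)$ via the estimate $p_N(av)\le Cp_{N+M}(v)$ and then composing with the Fr\'echet isomorphism $\FT$, and part (b) is deduced by applying Lemma~\ref{2.8}\,b) to $\op[a]u$ together with the identification $\widehat{\op[a]u}(\bk)=a(\bk)\hat u(\bk)$. Your version of (b) is a bit more explicit than the paper's one-line justification, but it is the same argument.
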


\begin{proof}
  a) Let $a\in\mathcal O(\Z^n,L(E))$. Then there exist $C\ge 0$ and $M\in\N_0$ such that $\|a(\bk)\|_{L(E)}\le C\langle \bk\rangle^M$. For $v\in \mathscr S(\Z^n,E)$ and  $N\in\N_0$,
  \begin{align*}
   p_N(av) & = \sup_{\bk\in\Z^n} \langle\bk\rangle^N\|a(\bk)v(\bk)\| \\
   & \le \Big( \sup_{\bk\in\Z^n} \langle\bk\rangle^{-M} \|a(\bk)\|_{L(E)} \Big) \Big( \sup_{\bk\in\Z^n} \langle\bk\rangle^{M+N} \|v(\bk)\|\Big) \le C p_{M+N}(v).
   \end{align*}
  This shows  that the map $v\mapsto av,\, \mathscr S(\Z^n,E)\to \mathscr S(\Z^n,E)$ is continuous. From the continuity of the Fourier transform $\FT$ and its inverse \textmd{(Lemma~\ref{2.2} a))}, we obtain that \textmd{$\op[a]\colon C^\infty(\T^n,E)\to C^\infty(\T^n,E)$ } is (well-defined and) continuous.

  b) follows from Lemma~\ref{2.8} b) and the fact that $a\FT u\in\mathscr S'(\textmd{\Z^n},E)$.
\end{proof}

\section{Pseudodifferential operators on toroidal Besov spaces}

In this section, we consider a class of (toroidal) pseudodifferential operators in the setting of vector-valued Besov spaces. In analogy to the $\R^n$-case, toroidal pseudodifferential operators are defined by conditions on their discrete derivatives, using the difference operator. We refer to \cite{ruzhansky-turunen10}, Section~3.3.1, for a more detailed exposition of the discrete analysis toolkit, and will only summarize the main ingredients.

For $j\in\{1,\dots,n\}$, we denote the $j$-th unit vector in $\R^n$ by $\delta_j :=(\delta_{jk})_{k=1,\dots,n}$ where $\delta_{jk}$ stands for the Kronecker symbol. For $a\colon\Z^n\to E$, $\bk =(k_1,\dots,k_n)^\top\in \Z^n$ and $\alpha\in\N_0^n$, the discrete differences are defined by
\begin{align*}
  \Delta_{k_j} a(\bk) & := a(\bk +\delta_j) - a(\bk),\\
  \bar \Delta_{k_j} a(\bk) & := a(\bk)-a(\bk-\delta_j),\\
  \Delta_{\bk}^\alpha & := \Delta_{k_1}^{\alpha_1}\ldots\Delta_{k_n}^{\alpha_n} ,\\
   \bar \Delta_\bk^\alpha & :=\bar \Delta_{k_1}^{\alpha_1}\ldots\bar \Delta_{k_n}^{\alpha_n} .
\end{align*}

The following definition of the symbol class is similar to the standard definition (see, e.g., Definition~\textmd{II.4.1.7 } in \cite{ruzhansky-turunen10}). However, we restrict ourselves to the standard H{\"o}rmander class $S_{1,0}^m$ and to $x$-independent symbols. On the other hand, our symbols are vector-valued and we need only finitely many conditions on the differences.

\begin{definition}
  \label{3.1} For $m\in\R$ and $\rho\in\N_0$, the symbol class $S^{m,\rho} = S^{m,\rho}_{1,0}(\Z^n,L(E))$ consists of all functions $a\colon\Z^n\to L(E)$ such that for all $\alpha\in\N_0^n$ with $\vert\alpha\vert\le\rho$, there exists $C_\alpha\ge 0$ with
\begin{equation}\label{def-ineq-symbolclass}
 \Vert \Delta_\bk^\alpha a(\bk)\Vert_{{L}(E)} \le C_\alpha\langle \bk\rangle^{m-\vert\alpha\vert} \quad (\bk\in\Z^n).
\end{equation}
 In $S^{m,\rho}$ we define the norm
\begin{equation}\label{def-norm-symbolclass}
 \Vert a\Vert_{S^{m,\rho}} := \max_{\vert\alpha\vert \le \rho} \,\,\sup_{\bk\in\Z^n}\, \langle \bk\rangle^{\vert\alpha\vert -m}\Vert \Delta_\bk^\alpha a(\bk)\Vert_{{L}(E)}.
\end{equation}
\end{definition}

\begin{remark}
  \label{3.2}
  For all $m\in\R$ and $\rho\in\N_0$, we have $S^{m,\rho}\subset \mathcal O(\Z^n,L(E))$, and therefore, $\textmd{\op[a]}$ is well defined for $a\in S^{m,\rho}$. For $a\in S^{m,\rho}$ with $m<-n$ \textmd{and $f\in C^\infty(\T^n,E)$}, we can write
  \begin{align*}
    \big(\textmd{\op[a]}f\big)(x) & = (\FT^{-1} a \FT f)(x) = \sum_{\bk\in\Z^n}\textmd{\int_{\T^n}} e^{i\bk\cdot (x-y)} a(\bk) f(y) \dbar y \\
    & = \sum_{\bk\in\Z^n} \int_{\T^n} e^{i\bk\cdot y} a(\bk) f(x-y)\dbar y\quad (x\in\T^n).
  \end{align*}
  Note that for general $m$ the sum does not converge absolutely. Therefore, we consider an oscillatory version in analogy to the continuous case.
\end{remark}

\begin{definition}
  \label{3.3}
  Let $m\in\R$, $\rho\in\N_0$, and let $\chi\in\mathscr S(\R^n)$ with $\chi(0)=1$. For $a\in S^{m,\rho}$ and $f\in C^\infty(\T^n,E)$, we define
  \[ \ossumint e^{i\bk\cdot y} a(\bk) f(x-y)\dbar y := \lim_{\epsilon\searrow 0} \sumint e^{i\bk \cdot y} \chi(\epsilon\bk) a(\bk) f(x-y)\dbar y\]
  for all $x\in\T^n$.
\end{definition}

\begin{remark}\label{3.4}
a)   Note that the sum in Definition~\ref{3.3} is absolutely convergent. By integration by parts and dominated convergence, we easily see that for $N\in\N$ with $2N>m+n$
  \begin{align*}
    \textmd{\big(\op[a] f\big)}(x) & = \sumint e^{i\bk\cdot y} \langle \bk\rangle^{-2N} a(\bk) \big[(1-\mathcal L_y)^N f\big](x-y)\dbar y \\
    & = \ossumint e^{i\bk\cdot y} a(\bk) f(x-y) \dbar y,
  \end{align*}
  where $\mathcal L_y$ denotes the Laplacian on $\T^n$ with respect to the variable $y$. In particular, the definition of the oscillatory integral in Definition~\ref{3.3} does not depend on $\chi$.

  b) As the symbols we consider are $x$-independent, the symbol of the composition of two pseudodifferential operators equals the product of the symbols. More precisely, let $\rho\in\N_0$, $m_i\in\R$ and $a_i\in S^{m_i,\rho}$ for $i=1,2$. Then it follows directly from the definitions that $a_1a_2\in S^{m_1+m_2,\rho}$ and $\textmd{\op[a_1a_2]=\op[a_1]\op[a_2]}$ as operators both in $C^\infty(\T^n,E)$ and in $\mathscr D'(\T^n,E)$. Here $(a_1a_2)(\bk):= a_1(\bk)a_2(\bk)$ is the composition of the operators in $L(E)$ for every $\bk\in\Z^n$.
\end{remark}

In the following, we will consider vector-valued Besov and Sobolev spaces on the torus. We follow a standard approach based on dyadic decompositions which in the  $\R^n$-case  can be found, e.g., in \cite{triebel78}, Section~2.3.

\begin{definition}\label{3.5}
  A sequence $(\phi_j)_{j\in\N_0}\subset\mathscr S(\R^n)$ is called a dyadic decomposition if $\supp\phi_0\subset\{\xi\in\R^n: |\xi|\le 2\}$, $\supp\phi_j\subset\{\xi\in\R^n: 2^{j-1}\le|\xi|\le 2^{j+1}\}$, $\sum_{j=0}^\infty \phi_j(\xi) =1 \;(\xi\in\R^n)$, and if for each $\alpha\in\N_0^n$ there exists a constant $c_\alpha>0$ such that
  \[ |D_\xi^\alpha \phi_j(\xi)| \le c_\alpha 2^{-j|\alpha|}\quad (\xi\in\R^n,\, j\in\N_0).\]
\end{definition}

\textmd{\begin{remark}\label{3.5a}
A dyadic decomposition can be constructed in the following way: Let $\phi_0\in\mathscr S(\R^n)$ with $\supp\phi_0\subset\{\xi\in\R^n: |\xi|\le \frac32\}$ and $\phi_0(\xi)=1\;(|\xi|\le 1)$. Then we define $\phi_j(\xi) := \phi_0(2^{-j}\xi) - \phi_0(2^{-j+1}\xi) $ for $j\in\N$ and $\xi\in\R^n$.
\end{remark}
}
In the following, let $(\phi_j)_{j\in\N_0}$ be a dyadic decomposition. Additionally, we may assume that $\phi_j$ is constructed in the above way.

\begin{lemma}
  \label{3.6}
  a) For $\psi\in\mathscr S(\Z^n)$, we have $\sum_{j=0}^\infty \phi_j|_{\Z^n}\psi=\psi$ in $\mathscr S(\Z^n)$.

  b) For $u\in \mathscr D'(\T^n,E)$, we have $\sum_{j=0}^\infty \op[\phi_j]u = u$ in $\mathscr D'(\T^n,E)$.
\end{lemma}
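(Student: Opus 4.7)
The plan hinges on the telescoping identity
$$\sum_{j=0}^N \phi_j(\xi) = \phi_0(2^{-N}\xi) \quad (\xi\in\R^n,\ N\in\N_0),$$
which is immediate from the explicit construction in Remark~\ref{3.5a}. Setting $S_N := \sum_{j=0}^N \phi_j|_{\Z^n}$, the remainder $1-S_N$ then vanishes on $\{\bk\in\Z^n : |\bk|\le 2^N\}$ (since $\phi_0\equiv 1$ on $\{|\xi|\le 1\}$) and is bounded everywhere by $C := 1+\|\phi_0\|_\infty$.

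For part a) I fix $\psi\in\mathscr S(\Z^n)$ and $M\in\N_0$, and estimate
\begin{align*}
 p_M(\psi - S_N\psi)
 & = \sup_{\bk\in\Z^n}\langle\bk\rangle^M\bigl|1-\phi_0(2^{-N}\bk)\bigr|\,|\psi(\bk)|\\
 & \le C \sup_{|\bk|>2^N}\langle\bk\rangle^M|\psi(\bk)|
 \le C\,p_{M+1}(\psi)\sup_{|\bk|>2^N}\langle\bk\rangle^{-1},
\end{align*}
which tends to $0$ as $N\to\infty$. This yields convergence of $S_N\psi$ to $\psi$ in every Schwartz seminorm, as required.

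For part b) I fix $u\in\mathscr D'(\T^n,E)$ and $\varphi\in C^\infty(\T^n)$. Linearity of $\op[\cdot]$ in the symbol, which is immediate from Definition~\ref{2.9}, combined with the telescoping identity yields
$$u_N := \sum_{j=0}^N \op[\phi_j]u = \op[S_N]u,$$
and since $S_N$ has finite support on $\Z^n$, Lemma~\ref{2.10} b) identifies $u_N$ as the \emph{finite} sum $\sum_{\bk} e_\bk \otimes \phi_0(2^{-N}\bk)\hat u(\bk)$. Pairing with $\varphi$ and using $\langle e_\bk,\varphi\rangle = \hat\varphi(-\bk)$ I obtain
$$\langle u_N,\varphi\rangle = \sum_{\bk\in\Z^n}\phi_0(2^{-N}\bk)\,\hat\varphi(-\bk)\,\hat u(\bk),$$
while Lemma~\ref{2.8} b) gives $\langle u,\varphi\rangle = \sum_\bk \hat\varphi(-\bk)\hat u(\bk)$. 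The latter series converges absolutely in $E$ because $\hat\varphi\in\mathscr S(\Z^n)$ decays rapidly while $\hat u\in\mathcal O(\Z^n,E)$ has only polynomial growth, and therefore
$$\bigl\|\langle u - u_N,\varphi\rangle\bigr\| \le C\sum_{|\bk|>2^N}|\hat\varphi(-\bk)|\,\|\hat u(\bk)\| \longrightarrow 0 \quad (N\to\infty),$$
which is the required convergence in the weak-$*$ topology of $\mathscr D'(\T^n,E)$.

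The main conceptual step is recognizing the telescoping collapse of the symbols; after that, both parts reduce to routine tail estimates pairing Schwartz decay against polynomial growth, using the seminorms $p_M$ from Section~2. I do not anticipate any serious obstacle.
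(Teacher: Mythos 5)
Your proof is correct, but it follows a somewhat different route than the paper in both parts.

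For part a), you exploit the telescoping identity $\sum_{j=0}^N\phi_j = \phi_0(2^{-N}\cdot)$, which is indeed valid under the explicit construction of Remark~\ref{3.5a} that the paper permits you to assume. This makes the argument very clean: $1-S_N$ vanishes for $|\bk|\le 2^N$ and is uniformly bounded, and the tail estimate then follows by trading one power of $\langle\bk\rangle$. The paper's own proof deliberately avoids the telescoping and instead uses only the general properties of dyadic decompositions: the uniform bound $|\phi_j|\le c_0$ together with the finite-overlap of the supports (each $\xi$ lies in at most finitely many $\supp\phi_j$) to bound $\sum_j|\phi_j(\xi)|$, and the eventual identity $\sum_{j\le m}\phi_j(\bk)=1$ on $|\bk|\le R$. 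Your version buys simplicity; the paper's version buys independence from the particular construction of the $\phi_j$.

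For part b), the difference is more substantive. The paper derives b) from a) by a purely functional-analytic argument: writing $\psi=(\FT\varphi)(-\cdot)\in\mathscr S(\Z^n)$ and using that $\FT u$ is a continuous linear map $\mathscr S(\Z^n)\to E$, so $\langle\FT u, S_N\psi-\psi\rangle\to 0$ follows immediately from $S_N\psi\to\psi$ in $\mathscr S(\Z^n)$. Your argument instead unwinds the pairing into the explicit absolutely convergent series $\langle u,\varphi\rangle=\sum_\bk\hat\varphi(-\bk)\hat u(\bk)$ from Lemma~\ref{2.8} b), identifies $\langle u_N,\varphi\rangle$ as the corresponding truncation weighted by $\phi_0(2^{-N}\bk)$, and concludes by a tail estimate on the series. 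This makes your b) entirely self-contained (it does not actually invoke a) at all), but it is more computational; the paper's reduction to a) via continuity is the more structural argument and requires less bookkeeping. Both are valid.
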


\begin{proof}
  a) By the properties of a dyadic decomposition, we have $|\phi_j(\xi)|\le c_0\; (j\in\N_0,\, \xi\in\R^n)$ and, noting the conditions on the support, $\sum_{j=0}^\infty |\phi_j(\xi)|\le \textmd{2}c_0\;(\xi\in\R^n)$.

  Let $\psi\in\mathscr S(\R^n)$. For $\epsilon>0$ and $N\in\N_0$, there exists $R>0$ such that
  \[ \textmd{\langle \bk\rangle^N} |\psi(\bk)| <\frac{\epsilon}{1+\textmd{2}c_0}\quad (\bk\in\Z^n\;\text{ with }|\bk|\textmd{\ge} R).\]
  Again due to the support condition on $(\phi_j)_{j\in\N_0}$, there exists $m_0\in\N$ with $\sum_{j=0}^m \phi_j(\bk) = \sum_{j=0}^\infty \phi_j(\bk) = 1$ for all $m\ge m_0$ and $\bk \in\Z^n$ with $|\bk |\le R$. Then for all $m\ge m_0$ we obtain
  \begin{align*}
    p_N\Big( \sum_{j=0}^m \phi_j\textmd{\big|_{\Z^n}}\psi -\psi\Big) & = \sup_{\bk\in\Z^n} \langle \bk\rangle ^N
    \Big| \Big( \sum_{j=0}^m \phi_j(\bk) -1\Big)\psi(\bk)\Big| \\
    & \le \sup_{|\bk|\ge R} \langle\bk\rangle^N \Big( 1+ \sum_{j=0}^m|\phi_j(\bk)|\Big) |\psi(\bk)| \\
    & \le (1+\textmd{2} c_0)\tfrac{\epsilon}{1+\textmd{2}c_0} = \epsilon.
  \end{align*}
  Thus for all $N\in\N$ we see that $p_N(\sum_{j=0}^m\phi_j\textmd{|_{\Z^n}}\psi -\psi)\to 0\;(m\to\infty)$, which implies a).

  b) Let \textmd{$u\in\mathscr D'(\T^n,E)$, } $\ph\in C^\infty(\T^n)$ and $\psi:=(\FT \ph)(-\,\cdot\,)$. Then $\psi\in\mathscr S(\Z^n)$, and by a) and the continuity of $\textmd{\FT u}$ on $\mathscr S(\Z^n)$ we obtain
  \begin{align*}
    \Big\| \Big\langle \sum_{j=0}^m & \FT^{-1} (\phi_j\textmd{|_{\Z^n}}\FT u)-u , \ph \Big\rangle\Big\| \\
    & = \Big\| \Big\langle \sum_{j=0}^m \phi_j\textmd{|_{\Z^n}}\FT u -\FT u , (\FT\ph)(-\,\cdot\,) \Big\rangle \Big\| \\
    & = \Big\| \sum_{j=0}^m \big\langle \FT u , \phi_j\textmd{|_{\Z^n}} \psi \big\rangle - \big\langle \FT u , \psi\big\rangle \Big\| \\
    & = \Big\| \Big\langle \FT u ,  \sum_{j=0}^m \phi_j\textmd{|_{\Z^n}}\psi -\psi \Big\rangle \Big\| \to 0\quad (m\to\infty).
  \end{align*}
  By definition of the weak-*-topology, this implies $\sum_{j=0}^m \op[\phi_j]u \to u$ in $\mathscr D'(\T^n,E)$.
\end{proof}

\begin{definition}
  \label{3.7}
  Let $p,q\in[1,\infty]$, $m\in\N_0$, and $s\in\R$.

  a) The toroidal Sobolev space $W_p^m(\T^n,E)$ is defined as the set of all $u\in L^p(\T^n,E)$ for which $D^\alpha u\in L^p(\T^n,E)\;(|\alpha|\le m)$. Here, for $p\in [1,\infty)$ the space $L^p(\T^n,E)$ is the space of all (equivalence classes of) strongly measurable functions $u\colon\T^n\to E$ with $\|u\|_{L^p(\T^n,E)} := (\int_{\T^n} \|f(x)\|^p \dbar x)^{1/p}\textmd{<\infty}$. The norm in $W_p^m(\T^n,E)$ is defined as
  \[ \|u\|_{W_p^m(\T^n,E)} := \Big(\sum_{|\alpha|\le m} \|D^\alpha u\|^{\textmd{p}}_{L^p(\T^n,E)}\Big)^{1/p}\quad (u\in W_p^m(\T^n,E)).\]
  For $p=\infty$ we have the usual modification.

  b) The toroidal Besov space $B^s_{pq}(\T^n,E)$ is defined as $B^s_{pq}(\T^n,E):= \{u\in\mathscr D'(\T^n,E): \|u\|_{B^s_{pq}(\T^n,E)}<\infty\}$ with
  \[ \|u\|_{B_{pq}^s(\T^n,E)} := \Big\| \Big( 2^{js} \big\| \op[\phi_j]u\big\|_{L^p(\T^n,E)}\Big)_{j\in\N_0}\Big\|_{\ell^q(\N_0)}.\]
\end{definition}

\begin{remark}
  \label{3.8}
  In the same way as in the vector-valued continuous ($\R^n$-)case (see \cite{amann09}, Section~3.3), one sees the following properties: \textmd{Different choices of the dyadic decomposition lead to equivalent norms, } and for all $m\in\N_0$ and $p\in [1,\infty]$, we have
  \begin{gather*}
    B_{p1}^m(\T^n,E) \hookrightarrow W_p^m(\T^n,E) \hookrightarrow B_{p\infty}^m(\T^n,E),\\
    B_{\infty1}^m(\T^n,E)\hookrightarrow C^m(\T^n,E)\hookrightarrow B_{\infty\infty}^m(\T^n,E),
  \end{gather*}
  where ``$\hookrightarrow$'' means continuous embedding. Moreover, for all $s\in \R$ and $p,q\in[1,\infty]$, we have $C^\infty(\T^n,E)\hookrightarrow B_{pq}^s(\T^n,E)$.
  For $p,q,q_0,q_1\in[1,\infty],\, s_0,s_1\in\R$, and $\theta\in(0,1)$, the equality
  \[ \big( B_{pq_0}^{s_0}(\T^n,E), B_{pq_1}^{s_1}(\T^n,E)\big)_{\theta,q} = B_{pq}^{(1-\theta)s_0+\theta s_1}(\T^n,E)\]
  holds with equivalent norms, where $(\cdot,\cdot)_{\theta,q}$ denotes the real interpolation functor.
\end{remark}

In the following, we will discuss convolution in $\R^n$ and $\T^n$ and the connection to pseudodifferential operators in $\R^n$. For this, let us denote the Fourier transform in $\R^n$ by $\FR $ which is defined for $f\in L^1(\R^n,E)$ by
\[ (\FR f)(\xi) := (2\pi)^{-n/2} \int_{\R^n} e^{-ix\cdot\xi} f(x)dx\quad (\xi\in\R^n).\]
The inverse continuous Fourier transform is then given by $(\FR^{-1}g)(x) = (2\pi)^{-n/2} \int_{\R^n} e^{ix\cdot \xi} g(\xi)d\xi\;(x\in\R^n)$.

We will use the following result from \cite{barraza-denk-monzon14}, \textmd{Lemma~2.3.}

\begin{lemma}
  \label{3.9}
  Let $m\in\R$ and $a\in S_{1,0}^{m,n+1}(\R^n,L(E))$. Then $\FR^{-1}(\phi_j a)\in L^1(\R^n,L(E))$ and
  \[ \big\| \FR^{-1}(\phi_j a)\big\|_{L^1(\R^n,L(E))} \le c_{n,m} 2^{jm} \|a\|_{S_{1,0}^{m,n+1}(\R^n,L(E))}\quad (j\in\N_0),\]
  where the constants $c_{n,m}$ are independent of $j$ and $a$.
\end{lemma}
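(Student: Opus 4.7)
The plan is to reduce the claim to pointwise estimates on $\FR^{-1}(\phi_j a)$ obtained by integration by parts, and then to balance these estimates by splitting the $L^1$-integration in $x$ at the natural scale $|x|\sim 2^{-j}$.

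First I would observe that since $\phi_j$ has compact support (in the annulus $\{2^{j-1}\le|\xi|\le 2^{j+1}\}$ for $j\ge 1$ and in $\{|\xi|\le 2\}$ for $j=0$), the function $\phi_j a$ is integrable and $\FR^{-1}(\phi_j a)$ is smooth. For any multi-index $\alpha\in\N_0^n$ with $|\alpha|\le n+1$, integration by parts yields
\[ x^\alpha \FR^{-1}(\phi_j a)(x) = i^{|\alpha|}\FR^{-1}\bigl(D_\xi^\alpha(\phi_j a)\bigr)(x).\]
Applying the Leibniz rule together with the dyadic bound $\|D^\beta\phi_j\|_{L^\infty(\R^n)}\le c_\beta 2^{-j|\beta|}$ and the symbol bound $\|D^\gamma a(\xi)\|_{L(E)}\le \|a\|_{S_{1,0}^{m,n+1}}\langle\xi\rangle^{m-|\gamma|}$, one obtains on $\supp\phi_j$ (where $\langle\xi\rangle\sim 2^j$) the pointwise estimate
\[ \|D^\alpha(\phi_j a)(\xi)\|_{L(E)} \le C\|a\|_{S_{1,0}^{m,n+1}(\R^n,L(E))}\,2^{j(m-|\alpha|)}.\]
Combining this with the volume bound $|\supp\phi_j|\le C 2^{jn}$ and the trivial estimate $\|\FR^{-1}g\|_{L^\infty}\le(2\pi)^{-n/2}\|g\|_{L^1}$ gives
\[ \|x^\alpha \FR^{-1}(\phi_j a)(x)\|_{L(E)} \le C\|a\|_{S_{1,0}^{m,n+1}(\R^n,L(E))}\,2^{j(m+n-|\alpha|)}\]
uniformly in $x\in\R^n$.

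Next I would split the $L^1$-integral at $|x|=2^{-j}$. On the ball $\{|x|\le 2^{-j}\}$ the choice $\alpha=0$ gives the pointwise bound $C\|a\|_{S_{1,0}^{m,n+1}}2^{j(m+n)}$; integration over a ball of volume $\le C 2^{-jn}$ contributes at most $C\|a\|_{S_{1,0}^{m,n+1}}2^{jm}$. On the exterior $\{|x|>2^{-j}\}$ I would use $|x|^{n+1}\le C_n\sum_{k=1}^n|x_k|^{n+1}$ together with the case $\alpha=(n+1)\delta_k$ to deduce
\[ \|\FR^{-1}(\phi_j a)(x)\|_{L(E)} \le C\|a\|_{S_{1,0}^{m,n+1}(\R^n,L(E))}\,2^{j(m-1)}|x|^{-(n+1)},\]
and the elementary estimate $\int_{|x|>2^{-j}}|x|^{-(n+1)}\,dx\le C_n 2^j$ then produces a second contribution again bounded by $C\|a\|_{S_{1,0}^{m,n+1}}2^{jm}$. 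Summing the two halves yields the claim with a constant depending only on $n$ and $m$.

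The main obstacle is the precise bookkeeping of the powers of $2^j$: each derivative falling on $a$ or on $\phi_j$ produces a gain of $2^{-j}$, and these gains must balance exactly against the volume growth $2^{jn}$ of $\supp\phi_j$ and against the growth $\sim 2^j$ of $\int_{|x|>2^{-j}}|x|^{-(n+1)}\,dx$. Choosing the truncation radius to be $2^{-j}$, that is, the reciprocal of the Fourier localisation scale, is precisely what makes the two halves of the split balance. The $j=0$ case is handled by the same argument with the annulus replaced by a ball of radius $2$.
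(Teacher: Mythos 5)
Your proof is correct and uses the standard argument for this type of vector-valued $L^1$-Fourier-inverse estimate. The paper itself does not reproduce a proof of Lemma~3.9; it cites Lemma~2.3 of \cite{barraza-denk-monzon14}, and the argument there is the same integration-by-parts and split-integral scheme you present. The choice of the split radius $|x|\sim 2^{-j}$ and the use of exactly $n+1$ integrations by parts (which is why $\rho\ge n+1$ is demanded) is precisely what makes the two regions contribute comparably, and it is the route one is forced to take because Plancherel/Cauchy--Schwarz shortcuts that would work for scalar or Hilbert-valued symbols are unavailable for a general Banach space $E$. The only cosmetic point is the constant $i^{|\alpha|}$ in the identity $x^\alpha\FR^{-1}(\phi_j a) = i^{|\alpha|}\FR^{-1}(D_\xi^\alpha(\phi_j a))$: with the paper's convention $D^\alpha = (-i)^{|\alpha|}\partial^\alpha$ the factor is $(-1)^{|\alpha|}$, but since only norms enter this has no effect on the estimate.
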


The convolution $f\ast g$ of $f\in L^1(\R^n,L(E))$ and $g\in L^p(\T^n,E)$ is defined by
\[ (f\ast g)(x) := \int_{\R^n} f(x-y)g(y)dy\quad (x\in \T^n),\]
identifying $g$ with its $2\pi$-periodic extension on $\R^n$. Note that $f\ast g$ is again $2\pi$-periodic and is considered as a function on $\T^n$ again. Similarly, for $f\in L^1(\T^n,L(E))$ and $g\in L^1(\T^n,E)$, we define the toroidal convolution by
\[ (f\ast g)_{\T^n}(x) := \int_{\T^n} f(x-y) g(y)\dbar y\quad (x\in\T^n).\]

\begin{lemma}
  \label{3.10}
  a) For $p\in[1,\infty]$, $f\in L^1(\R^n,L(E))$, and $g\in L^p(\T^n,E)$, we have $f\ast g\in L^p(\T^n,E)$ and
  \[ \|f\ast g\|_{L^p(\textmd{\T^n},E)} \le \|f\|_{L^1(\R^n,L(E))} \|g\|_{L^p(\T^n,E)}.\]

  b) Let $p\in[1,\infty]$, $f\in L^1(\T^n,L(E))$, and $g\in L^p(\T^n,E)$. Then we have $(f\ast g)_{\T^n}\in L^p(\T^n,E)$ and
  \[ \|(f\ast g)_{\T^n}\|_{L^p(\T^n,E)} \le \|f\|_{L^1(\T^n,L(E))} \|g\|_{L^p(\T^n,E)}.\]
\end{lemma}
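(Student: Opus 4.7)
My plan is to reduce both assertions to the classical (scalar) Young convolution inequality by passing from Bochner integrals to integrals of norms and then applying Minkowski's integral inequality for $L^p$. The two statements share the same structure, with part (a) requiring the additional observation that $L^p(\T^n,E)$--norms are invariant under arbitrary $\R^n$-translations because $g$ is tacitly identified with its $2\pi$-periodic extension.

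For part (b), I would first use the change of variables $z = x - y$ (translation invariance of $\dbar y$ on $\T^n$) to rewrite
\begin{equation*}
(f*g)_{\T^n}(x) = \int_{\T^n} f(z)\, g(x-z)\,\dbar z,
\end{equation*}
so that the triangle inequality for Bochner integrals gives the pointwise bound
\begin{equation*}
\bigl\|(f*g)_{\T^n}(x)\bigr\|_E \le \int_{\T^n} \|f(z)\|_{L(E)}\,\|g(x-z)\|_E\,\dbar z.
\end{equation*}
Minkowski's integral inequality in the Banach space $L^p(\T^n,\dbar x)$, applied with integration parameter $z\in\T^n$, then yields
\begin{equation*}
\|(f*g)_{\T^n}\|_{L^p(\T^n,E)} \le \int_{\T^n} \|f(z)\|_{L(E)}\,\bigl\|\,g(\,\cdot\,-z)\,\bigr\|_{L^p(\T^n,E)}\,\dbar z,
\end{equation*}
and translation invariance of $\dbar x$ on $\T^n$ reduces the inner norm to $\|g\|_{L^p(\T^n,E)}$, which then factors out. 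The endpoint cases $p=1$ (immediate from Fubini) and $p=\infty$ (a direct essential supremum estimate) can be treated separately.

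For part (a), the same substitution and pointwise triangle inequality give
\begin{equation*}
\|(f*g)(x)\|_E \le \int_{\R^n} \|f(z)\|_{L(E)}\,\|g(x-z)\|_E\,dz \qquad (x\in\T^n),
\end{equation*}
and Minkowski's inequality in $L^p(\T^n,\dbar x)$ with parameter $z\in\R^n$ yields
\begin{equation*}
\|f*g\|_{L^p(\T^n,E)} \le \int_{\R^n} \|f(z)\|_{L(E)}\,\|g(\,\cdot\,-z)\|_{L^p(\T^n,E)}\,dz.
\end{equation*}
Since $g$ is identified with its $2\pi$-periodic extension to $\R^n$, the inner $L^p(\T^n,E)$--norm is invariant under all $\R^n$-translations, so it equals $\|g\|_{L^p(\T^n,E)}$ and pulls out of the $\R^n$-integral, giving the claimed bound.

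I do not foresee any serious obstacle; the only mild technical points are to verify strong measurability of the integrands $y\mapsto f(x-y)g(y)$ (standard for a strongly measurable operator-valued function acting on a Bochner-integrable vector-valued function) and a.e.\ well-definedness of the convolutions. Both reduce to applying Fubini's theorem to the scalar majorants $\|f(\cdot)\|_{L(E)}$ and $\|g(\cdot)\|_E$, which lie in $L^1$ and $L^p$ respectively.
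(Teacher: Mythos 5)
Your argument is correct, and it takes a genuinely different route from the paper's. The paper handles the interesting case $p\in(1,\infty)$ via the classical splitting trick in Young's inequality: one writes $\|f(x-y)\|_{L(E)}\|g(y)\| = \|f(x-y)\|_{L(E)}^{1/q}\cdot\bigl(\|f(x-y)\|_{L(E)}\|g(y)\|^p\bigr)^{1/p}$ with $1/p+1/q=1$, applies H\"older's inequality, raises to the $p$-th power, integrates over $\T^n$, and then uses Fubini together with translation invariance of the integral over $\T^n$. You instead apply Minkowski's integral inequality directly to the pointwise bound $\|(f\ast g)(x)\|\le\int\|f(z)\|_{L(E)}\|g(x-z)\|\,dz$, which moves the $L^p(\T^n)$-norm inside the $z$-integral and lets you pull out $\|g\|_{L^p(\T^n,E)}$ by translation invariance. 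Both proofs are valid; Minkowski's inequality gives a visibly shorter reduction and avoids the somewhat opaque exponent bookkeeping, while the paper's H\"older approach is the textbook-standard Young's inequality derivation and needs only the most basic form of H\"older. Your remark on translation invariance for part (a) — that the $\T^n$-norm is invariant under arbitrary $\R^n$-shifts because $g$ is periodic — is exactly the observation the paper also relies on in its last step, so both arguments need it. The measurability and a.e.\ finiteness remarks you flag as technicalities are indeed routine and are also left implicit in the paper.
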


\begin{proof}
  We only prove part a) as b) follows in the same way. As the cases $p=1$ and $p=\infty$ are straightforward, let us assume that $p\in (1,\infty)$ and set $q=p/(p-1)$. For $f\in L^1(\R^n,L(E))$ and $g\in L^p(\T^n,E)$ we can estimate, using H{\"o}lder's inequality,
  \begin{align*}
    \|f\ast g\|_{L^p(\T^n,E)}^p & \le \int_{\T^n}\textmd{\Big[} \int_{\R^n}  \|f(x-y)\|_{L(E)}\|g(y)\| dy\textmd{\Big]^p}\dbar x\\
    & \le \int_{\T^n} \Bigg[ \Big(\int_{\R^n} \|f(x-y)\|_{L(E)}dy\Big)^{1/q} \\
    & \qquad \cdot \Big( \int_{\R^n} \|f(x-y)\|_{L(E)} \|g(y)\|^pdy\Big)^{1/p}\Bigg]^p \dbar x\\
    & = \|f\|_{L^1(\R^n,L(E))}^{p/q} \int_{\T^n}\int_{\R^n} \|f(y)\|_{L(E)} \|g(x-y)\|^pdy \dbar x\\
    & = \textmd{\|f\|^{p/q+1} _{L^1(\R^n,L(E))}} \|g\|_{L^p(\T^n,E)}^p,
  \end{align*}
  where in the last step we used the invariance of the integral over $\T^n$ under translations. Therefore,
  \[ \|f\ast g\|_{L^p(\T^n,E)} \le \|f\|_{L^1 (\R^n,L(E))}^{1/p+1/q} \|g\|_{L^p(\T^n,E)},\]
  which yields a).
\end{proof}

\begin{lemma}
  \label{3.11}
If $M\in C_c(\R^n,L(E))\cap \FR(L^1(\R^n,L(E)))$, then for all $f\in C^\infty(\T^n,E)$ and $p\in[1,\infty]$ we have
\begin{align*}
  \FT^{-1}\big( M|_{\Z^n}\FT f\big)  & = (2\pi)^{-n/2}   (\FR^{-1}M)\ast f ,\\
  \Big\| \sum_{\bk \in\Z^n} \textmd{e_\bk}\otimes M(\bk ) & (\FT f)(\bk )\Big\|_{L^p(\T^n,E)}\\
   & \le \textmd{(2\pi)^{-n/2}}  \big\| \FR^{-1} M\big\|_{L^1(\R^n,L(E))} \|f\|_{L^p(\T^n,E)}.
\end{align*}
\end{lemma}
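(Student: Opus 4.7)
The plan is to identify the discrete operator with a continuous convolution via periodization of $K := \FR^{-1}M \in L^1(\R^n, L(E))$. Since $M$ has compact support, only finitely many $M(\bk)$ are non-zero, so by Lemma~\ref{2.10}~b) the sum on the left-hand side of the norm estimate coincides with $\op[M|_{\Z^n}]f = \FT^{-1}(M|_{\Z^n}\FT f)$, a trigonometric polynomial.

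Define the periodization $K_\per(x) := \sum_{\mathbf{m}\in\Z^n} K(x-2\pi\mathbf{m})$. From
\[
\int_{\T^n}\sum_{\mathbf{m}\in\Z^n}\|K(x-2\pi\mathbf{m})\|_{L(E)}\dbar x = (2\pi)^{-n}\|K\|_{L^1(\R^n,L(E))}<\infty,
\]
the series converges absolutely in $L^1(\T^n,L(E))$, and Fubini yields
\[
\hat K_\per(\bk) = (2\pi)^{-n}\int_{\R^n}e^{-i\bk\cdot x}K(x)\,dx = (2\pi)^{-n/2}M(\bk).
\]
Splitting the integral defining $(K\ast f)(x)$ into translates of $[-\pi,\pi]^n$ and using the $2\pi$-periodicity of $f$ gives $(K\ast f)(x) = (2\pi)^n\int_{\T^n}K_\per(x-y)f(y)\dbar y$. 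Since $M$ has compact support, the Fourier coefficients of $K_\per$ vanish outside a finite set, so $K_\per$ is a trigonometric polynomial with pointwise representation $K_\per(z) = (2\pi)^{-n/2}\sum_\bk M(\bk)e^{i\bk\cdot z}$. Substituting and using $(\FT f)(\bk) = \int_{\T^n}e^{-i\bk\cdot y}f(y)\dbar y$ yields
\[
(2\pi)^{-n/2}(K\ast f)(x) = \int_{\T^n}\Bigl(\sum_\bk M(\bk)e^{i\bk\cdot(x-y)}\Bigr)f(y)\dbar y = \sum_\bk e^{i\bk\cdot x}M(\bk)(\FT f)(\bk),
\]
which is the first identity. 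The norm estimate is then immediate from Lemma~\ref{3.10}~a) applied to the right-hand side.

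The main obstacle is the bookkeeping of the various $2\pi$-factors arising from the normalized Fourier transform on $\R^n$ (each $\FR$ contributes $(2\pi)^{\pm n/2}$), from the measure $\dbar x = (2\pi)^{-n}dx$, and from the passage between $\R^n$- and $\T^n$-integrals; these must all align for the constant $(2\pi)^{-n/2}$ in the first identity to come out correctly. Conceptually the argument is classical Poisson summation, and the only real technical point is the absolute convergence of the periodization series, which follows from Fubini and the $L^1$-integrability of $K$.
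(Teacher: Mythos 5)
Your argument is correct and is essentially the argument behind the paper's proof, which simply refers to Arendt--Bu (Proposition~2.2 in the cited reference) for the case $n=1$ and notes it carries over: that proof is exactly the periodization/Poisson summation you wrote out, and your bookkeeping of the $(2\pi)$-factors is consistent with the paper's normalizations $\dbar x = (2\pi)^{-n}dx$ and $\FR = (2\pi)^{-n/2}\int e^{-ix\cdot\xi}\cdot\,dx$.
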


\begin{proof}
  Let $M$ be as in the lemma and $f\in C^\infty(\T^n,E)$, $p\in [1,\infty]$. In exactly the same way as in the case $n=1$ (see \cite{arendt-bu04}, Proposition~2.2), one obtains that for all $x\in\T^n$
  \begin{align*}
    \FT^{-1} (M|_{\Z^n} \FT f)(x) & = \Big[ \sum_{\bk\in\Z^n} e_{\bk}\otimes M(\bk)(\FT f)(\bk)\Big](x) \\
    & = (2\pi)^{-n/2} \big[ (\FR^{-1} M)\ast f\big](x).
  \end{align*}
  Now Lemma~\ref{3.10} a) yields the \textmd{second } statement.
\end{proof}

The following result shows that symbols on $\Z^n$ are restrictions of symbols on $\R^n$. It can be found in \cite{ruzhansky-turunen10}, Lemma~\textmd{II.4.5.1 } and Theorem~\textmd{II.4.5.3 } for the scalar-valued case, with the proofs carrying over to the vector-valued case.

\begin{proposition}
  \label{3.12}
  a) There exist $\theta\in\mathscr S(\R^n)$ and $\textmd{\theta_\alpha}\in\mathscr S(\R^n)$, $\alpha\in\N_0$, \textmd{such that}
  \[ (\mathcal P\theta)(x) := \sum_{\bk \in\Z^n} \theta (x+2\pi \bk) = 1\quad (x\in\R^n),\]
  $(\FR\theta)(\bk) = \delta_{0,\bk}\;(\bk\in\Z^n)$ and $\partial_\xi^\alpha (\FR\theta)(\xi) = \bar\Delta_{\xi}^\alpha\theta_\alpha(\xi)\;(\xi\in\R^n,\, \alpha\in\N_0^n)$.

  b) For $a\in S^{m,\rho}$ and $\theta$ as in a), define $\tilde a\colon\R^n\to L(E)$ by
  \begin{equation}
    \label{3-1}
    \tilde a(\xi) := \sum_{\bk\in\Z^n} (\FR\theta)(\xi-\bk) a(\bk)\quad (\xi\in\R^n).
  \end{equation}
  Then $\tilde a|_{\Z^n}= a$, and $\tilde a\in S^{m,\rho}_{1,0}(\R^n,L(E))$ with
  \[ \|\tilde a\|_{S^{m,\rho}_{1,0}(\R^n,L(E))} \le C_\theta \|a\|_{S^{m,\rho}}.\]
\end{proposition}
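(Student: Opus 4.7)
My plan is to adapt the scalar argument of Ruzhansky--Turunen to the operator-valued setting. The key algebraic identities only ever multiply the operator value $a(\bk)\in L(E)$ by scalars $(\FR\theta)(\xi-\bk)$ and $\theta_\alpha(\xi-\bk)$, so noncommutativity plays no role and the scalar proof transfers almost verbatim, with $|\cdot|$ replaced by $\|\cdot\|_{L(E)}$ throughout.

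For part (a), I would choose $\psi\in C_c^\infty(\R^n)$ with $\psi\equiv 1$ in a neighbourhood of $0$ and $\supp\psi\subset(-\pi,\pi)^n$, and set $\theta:=c_n\FR^{-1}\psi$ with a normalisation $c_n$ chosen so that both $(\FR\theta)(\bk)=\delta_{0,\bk}$ (immediate from the support condition on $\psi$) and $(\mathcal{P}\theta)(x)=1$ (via Poisson summation) hold. For the functions $\theta_\alpha$, the required identity $\bar\Delta_\xi^\alpha\theta_\alpha=\partial_\xi^\alpha(\FR\theta)$ is equivalent, on the Fourier side, to division by $\prod_j(1-e^{-iy_j})^{\alpha_j}$. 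Since $\partial^\alpha(\FR\theta)=\partial^\alpha\psi$ is compactly supported in $(-\pi,\pi)^n$, its inverse Fourier transform is entire and carries a zero of order $|\alpha|$ at the origin (from the factor $(iy)^\alpha$ coming out of $\partial^\alpha$), which cancels the zero of the denominator there. The remaining zeros of $\prod_j(1-e^{-iy_j})^{\alpha_j}$ at $y\in 2\pi\Z^n\setminus\{0\}$ cause no trouble thanks to the smoothness and rapid decay of the numerator. Thus the quotient lies in $\mathscr{S}(\R^n)$, and $\theta_\alpha$ is defined as its inverse Fourier transform.

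For part (b), $\tilde a|_{\Z^n}=a$ follows immediately from $(\FR\theta)(\boldsymbol\ell-\bk)=\delta_{0,\boldsymbol\ell-\bk}$. For the symbol estimate, I would differentiate termwise (justified by Schwartz decay of $\partial^\alpha(\FR\theta)$ and polynomial growth of $a$), invoke part (a) to replace $\partial_\xi^\alpha(\FR\theta)(\xi-\bk)$ by $\bar\Delta_\xi^\alpha\theta_\alpha(\xi-\bk)$, expand $\bar\Delta_\xi^\alpha$ as a finite signed sum of unit shifts, and relabel the summation index to transfer the difference from $\xi$ to $\bk$. This yields the key identity
\[
\partial_\xi^\alpha\tilde a(\xi)=\sum_{\bk\in\Z^n}\theta_\alpha(\xi-\bk)\,\bar\Delta_\bk^\alpha a(\bk).
\]
Since $\bar\Delta_\bk^\alpha a(\bk)=\Delta_\bk^\alpha a(\bk-\alpha)$, Definition~\ref{3.1} gives $\|\bar\Delta_\bk^\alpha a(\bk)\|_{L(E)}\le C\|a\|_{S^{m,\rho}}\langle\bk\rangle^{m-|\alpha|}$ uniformly for $|\alpha|\le\rho$. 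Combining this with Peetre's inequality (to pull out the factor $\langle\xi\rangle^{m-|\alpha|}$) and the Schwartz decay of $\theta_\alpha$ (to bound $\sum_{\bk}|\theta_\alpha(\xi-\bk)|\langle\xi-\bk\rangle^{|m-|\alpha||}$ uniformly in $\xi$) produces $\|\partial_\xi^\alpha\tilde a(\xi)\|_{L(E)}\le C_\theta\|a\|_{S^{m,\rho}}\langle\xi\rangle^{m-|\alpha|}$ for every $|\alpha|\le\rho$, with $C_\theta$ depending only on fixed Schwartz seminorms of $\theta$ and $\theta_\alpha$. The principal obstacle is part (a), specifically the verification that the Fourier-side division by $\prod_j(1-e^{-iy_j})^{\alpha_j}$ preserves the Schwartz class; this is precisely where the compact support of $\psi$ enters in an essential way. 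Once $\theta_\alpha$ is in hand, part (b) is essentially an index swap in an absolutely convergent sum combined with a standard Peetre-type estimate.
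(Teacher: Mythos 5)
Your part~(b) argument — termwise differentiation, summation by parts to transfer $\bar\Delta_\xi^\alpha$ from $\theta_\alpha(\xi-\bk)$ onto $a(\bk)$, and the Peetre estimate combined with the Schwartz decay of $\theta_\alpha$ — is correct and matches the standard (Ruzhansky--Turunen) route, which the paper simply cites with the remark that the scalar proof carries over verbatim (your observation that noncommutativity never enters is the right reason). The problem is part~(a): your construction of $\theta$ puts the compact support in the wrong place and therefore cannot produce the $\theta_\alpha$.

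To see the gap, Fourier transform the defining relation $\bar\Delta_\xi^\alpha\theta_\alpha = \partial_\xi^\alpha(\FR\theta)$ in the $\xi$-variable. Using $\FR^{-1}\FR\theta=\theta$, one finds
\[
\bigl(\FR^{-1}\theta_\alpha\bigr)(y) \;=\; \frac{(-iy)^\alpha}{\prod_{j=1}^{n}\bigl(1-e^{iy_j}\bigr)^{\alpha_j}}\,\theta(y).
\]
At $y=0$ the zero of $y^\alpha$ cancels the zero of the denominator, but at every $y\in 2\pi\Z^n\setminus\{0\}$ the denominator still vanishes to order $\alpha_j$ in each coordinate $j$ with $\alpha_j>0$, while $(-iy)^\alpha$ does not. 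For $\theta_\alpha$ to be Schwartz one therefore needs $\theta$ itself to vanish to order at least $\alpha_j$ at those lattice points; since the proposition demands this for \emph{every} $\alpha\in\N_0^n$, one needs $\theta$ to vanish to infinite order on $2\pi\Z^n\setminus\{0\}$. Your choice $\theta=c_n\FR^{-1}\psi$ with $\psi\in C_c^\infty$ makes $\theta$ \emph{entire} by Paley--Wiener, so it cannot vanish to infinite order anywhere without being identically zero. Your sentence ``the remaining zeros \dots\ cause no trouble thanks to the smoothness and rapid decay of the numerator'' is exactly where the argument breaks: smoothness and decay of the numerator do not give vanishing at those points, and vanishing is precisely what the division requires. (A separate minor issue: $\supp\psi\subset(-\pi,\pi)^n$ does not force $\psi(\bk)=\delta_{0,\bk}$, since $(-\pi,\pi)^n$ contains nonzero lattice points; also, with the paper's $(2\pi)^{-n/2}$ normalization of $\FR$, the two conditions $\mathcal P\theta\equiv 1$ and $(\FR\theta)(\bk)=\delta_{0,\bk}$ differ by a factor $(2\pi)^{n/2}$ and cannot both be met by adjusting a single constant $c_n$.)

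The correct construction (this is Lemma~4.5.1 in Ruzhansky--Turunen) is the mirror image of yours: take $\theta\in C_c^\infty(\R^n)$, \emph{not} $\FR\theta$, with $\supp\theta\subset(-2\pi,2\pi)^n$ and $\mathcal P\theta\equiv 1$ (a periodic partition of unity, obtained e.g.\ by normalizing a nonnegative bump by its own periodization). Then $\theta$ vanishes identically in a neighbourhood of each nonzero point of $2\pi\Z^n$, the displayed quotient is smooth with compact support, and $\theta_\alpha:=\FR$ of that quotient lies in $\mathscr S(\R^n)$; Poisson summation then gives $(\FR\theta)(\bk)=\mathrm{const}\cdot\delta_{0,\bk}$. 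With this corrected $\theta$ your part~(b) goes through as written.
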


\begin{remark}
  \label{3.13}
  Writing the right-hand side of \eqref{3-1} in the form
  \[\sum_{\bk\in\Z^n} \textmd{\langle\bk\rangle^{2N} (\FR\theta)(\xi-\bk)\langle\bk\rangle^{-2N} a(\bk)}\]
  \textmd{for $N$ large enough, similarly to} Remark~\ref{3.4} a), we see that
  \[    \tilde a(\xi)  = \mathop{\mbox{$\displaystyle\mathrm{Os}\!-\!\!\!\!\sum\limits_{\bk\in\Z^n}\int_{\textmd{\R^n}}$}} e^{i(\bk-\xi)\cdot x} \textmd{\theta (x) a(\bk) d x } = \int_{\R^n} e^{-i\xi\cdot x} \textmd{\theta (x)(\FT^{-1}a)(x)}\textmd{d x}.\]
\end{remark}

\begin{theorem}
  \label{3.14}
  Let $p\in[1,\infty]$, $q\in[1,\infty)$ and $s\in\R$. Then $C^\infty(\T^n,E)$ is dense in $B_{pq}^s(\textmd{\T^n},E)$.
\end{theorem}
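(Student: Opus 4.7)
The plan is to approximate an arbitrary $u\in B_{pq}^s(\T^n,E)$ by its partial dyadic sums
\[
u_N := \sum_{j=0}^N \op[\phi_j]u \qquad (N\in\N_0),
\]
and show, first, that each $u_N$ lies in $C^\infty(\T^n,E)$ and, second, that $u_N\to u$ in $B_{pq}^s(\T^n,E)$.

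For the regularity of $u_N$, I would argue that since $\phi_j\in\mathscr S(\R^n)$ has compact support, $\sum_{j=0}^N \phi_j$ has compact support as well, so its restriction to $\Z^n$ has finite support. By Lemma~\ref{2.10}~b) this gives
\[
u_N = \sum_{\bk\in\Z^n,\,|\bk|\le R_N} \Big(\sum_{j=0}^N\phi_j(\bk)\Big)\, e_\bk\otimes \hat u(\bk),
\]
which is a finite $E$-valued trigonometric polynomial, hence an element of $C^\infty(\T^n,E)$.

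The main step is the norm estimate of $u-u_N$. Using the assumption (via Remark~\ref{3.5a}) that $\supp\phi_j\subset\{2^{j-1}\le|\xi|\le 3\cdot 2^{j-1}\}$, one has $\phi_k\phi_j\equiv 0$ whenever $|k-j|\ge 2$, and $\sum_{j=0}^\infty \phi_j=1$. Using Remark~\ref{3.4}~b) to compose $\op[\phi_k]$ with $\op[\sum_{j=0}^N\phi_j]$, one obtains
\[
\op[\phi_k](u-u_N)=\op[\phi_k\textstyle\sum_{j=N+1}^\infty \phi_j]u,
\]
and the support analysis shows that this vanishes for $k\le N-1$, equals $\op[\phi_N\phi_{N+1}]u=\op[\phi_N]\op[\phi_{N+1}]u$ for $k=N$, equals $\op[\phi_{N+1}]\op[\phi_{N+1}+\phi_{N+2}]u$ for $k=N+1$, and equals $\op[\phi_k]u$ for $k\ge N+2$. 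Combining Lemma~\ref{3.11} with Lemma~\ref{3.9} applied to the uniformly bounded family $(\phi_j)\subset S_{1,0}^{0,n+1}(\R^n,\C)$ gives a constant $C>0$, independent of $k$, such that $\|\op[\phi_k]f\|_{L^p(\T^n,E)}\le C\|f\|_{L^p(\T^n,E)}$. Hence
\[
2^{ks}\|\op[\phi_k](u-u_N)\|_{L^p(\T^n,E)} \le C'\sum_{\ell\in\{k,k+1\}\cap[N+1,\infty)} 2^{\ell s}\|\op[\phi_\ell]u\|_{L^p(\T^n,E)},
\]
so, taking $\ell^q$-norms and using the triangle inequality in $\ell^q$,
\[
\|u-u_N\|_{B_{pq}^s(\T^n,E)}\le C''\Big\|\big(2^{\ell s}\|\op[\phi_\ell]u\|_{L^p(\T^n,E)}\big)_{\ell\ge N+1}\Big\|_{\ell^q}.
\]

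Since $u\in B_{pq}^s(\T^n,E)$ and $q<\infty$, the right-hand side is the tail of a convergent $\ell^q$-sum and tends to $0$ as $N\to\infty$. This yields $u_N\to u$ in $B_{pq}^s(\T^n,E)$, proving the density. The hypothesis $q<\infty$ enters precisely here, which is expected (and sharp, as in the classical $\R^n$ theory). The main delicate point I anticipate is the bookkeeping of the boundary indices $k=N,N+1$ together with the uniform operator-norm bound for $\op[\phi_k]$ on $L^p(\T^n,E)$; the latter is the reason why Lemma~\ref{3.9} (a Fourier-multiplier estimate on $\R^n$) and Lemma~\ref{3.11} (the periodization/convolution identity) are both needed even though the underlying space is $\T^n$.
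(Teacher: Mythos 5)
Your proof is correct and follows essentially the same route as the paper's: expand $u$ into its dyadic partial sums $u_N$, observe that each $u_N$ is a finite trigonometric polynomial and hence smooth, use the support structure to reduce $\op[\phi_k](u-u_N)$ to at most the two boundary indices $k\in\{N,N+1\}$ plus the genuine tail $k\ge N+2$, bound $\|\op[\phi_k]\|_{L(L^p(\T^n,E))}$ uniformly via Lemma~\ref{3.11}, and let the $\ell^q$-tail (finite because $q<\infty$) go to zero. The only differences from the paper are cosmetic: the paper introduces a second dyadic family $(\psi_k)$ for the partial sums and gets the uniform $L^1$ bound $\|\FR^{-1}\psi_k\|_{L^1}\le 2\|\FR^{-1}\psi_0\|_{L^1}$ by the scaling structure of Remark~\ref{3.5a}, whereas you reuse the single family $(\phi_j)$ and obtain the same uniform bound from Lemma~\ref{3.9} applied to the symbol $a=\id_E\in S_{1,0}^{0,n+1}$; both are valid and lead to the same estimate.
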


\begin{proof}
  We fix a second dyadic decomposition $(\psi_{\textmd{k}})_{\textmd{k}\in\N_0}$. For $u\in B_{pq}^s(\R^n,E)$ we set $u_N:=\sum_{k=0}^N \textmd{\op[\psi_k ]} u\;(N\in\N)$.   By definition of $B_{pq}^s(\T^n,E)$, we have $\textmd{\op[\psi_k ]}u\in L^p(\T^n,E)$ and therefore $u_N\in L^p(\T^n,E)\;(N\in\N)$. As $\FT u\in \mathcal O(\Z^n,E)$ and $\psi_k|_{\Z^n} \in \mathscr S(\Z^n)$, we see that $\psi_k|_{\Z^n}\FT u\in \mathscr S(\Z^n,E)$ and consequently $\textmd{\op[\psi_k ]}u\in C^\infty(\T^n, E)$.

  By Lemma~\ref{3.11},
  \[ \op[\phi_j ]\big( \op[\psi_k ]u\big) = (2\pi)^{-n/2} \big( \textmd{\FR^{-1}}\textmd{\psi_k}\big)\ast \big(\textmd{\op[\phi_j ]}u\big)\quad (j,k\in\textmd{\N_0}).\]
  Thus, $ \op[\phi_j ]\big( \op[\psi_k ]u\big)$ is a regular toroidal distribution and
  \begin{align*}
    \big\| \op[\phi_j ] & \big( \op[\psi_k ]u\big)\big\|_{L^p(\T^n,E)} \\
    &  \le (2\pi)^{-n/2}
    \|\FR^{-1}\psi_k\|_{L^1(\R^n)}
    \|\op[\phi_j ] u\|_{L^p(\T^n,E)}.
    \end{align*}
 \textmd{ We may assume that $(\psi_k)_{k\in\N_0}$ is constructed as described in Remark~\ref{3.5a}. In this case,
   \[\|\FR^{-1}\psi_k\|_{L^1(\R^n)} \le 2 \|\FR^{-1} \psi_0\|_{L^1(\R^n)}\quad (k\in\N_0).\]
   This yields
   \begin{equation}
     \label{3-2}
     \big\| \op[\phi_j ]  \big( \op[\psi_k ]u\big)\big\|_{L^p(\T^n,E)}\le c_{\psi_0} \|\op[\phi_j ] u\|_{L^p(\T^n,E)}.
   \end{equation}
}From Lemma~\ref{3.6} and $\supp\phi_j, \supp\psi_j\subset\{ \xi\in\R^n: 2^{j-1} \le|\xi|\le 2^{j+1}\}$, we obtain
  \begin{align}
    \|u-&u_N\|_{B_{pq}^s(\T^n,E)}^q = \sum_{j\in\N_0} 2^{jsq} \Big\| \op[\phi_j ] \Big( u-\sum_{k=0}^N \op[\psi_k ] u\Big)\Big\|_{L^p(\T^n,E)}^q \nonumber\\
    & = \sum_{j\in\N_0} 2^{jsq} \Big\| \sum_{k=N+1}^\infty \op[\phi_j ]\op[\psi_k ] u\Big\|_{L^p(\T^n,E)}^q \nonumber\\
    & = 2^{Nsq} \big\|\op[\phi_N ]\op[\psi_{N+1} ] u\big\|_{L^p(\T^n,E)}^{\textmd{q}}\nonumber \\
    & \quad + 2^{(N+1)sq} \big\| \op[\phi_{N+1} ]\big( \op[\psi_N ] + \op[\psi_{N+1} ]\big) u\big\|_{L^p(\T^n,E)}^q \nonumber\\
    & \quad + \sum_{j=N+2}^\infty 2^{jsq} \big\|\op[\phi_j ] u\big\|_{L^p(\T^n,E)}^q,
    \label{3-3}
  \end{align}
  where we used $\phi_j(\psi_{j-1} + \psi_j + \psi_{j+1}) = \phi_j$, \textmd{setting $\psi_{-1}:=0$}. We apply \eqref{3-2} and estimate the first and the second term in \eqref{3-3} by
  \begin{align*}
   2^{Nsq} \big\| \op[\phi_N]\op[\psi_{N+1}]u\big\|_{L^p(\T^n,E)}^q  &\le c_{\psi_0}^q 2^{Nsq} \|\op[\phi_N]u\|_{L^p(\T^n,E)}^q,\\
   2^{(N+1)sq} \big\|\op[\phi_{N+1}](\op[\psi_N]&+\op[\psi_{N+1}])u\big\|_{L^p(\T^n,E)} \\
    &\le 2^q c_{\psi_0}^q 2^{(N+1)sq} \|\op[\phi_{N+1}] u\|_{L^p(\T^n,E)}^q.
  \end{align*}
  Therefore,
  \[ \|u-u_N\|_{B_{pq}^s(\T^n,E)}^q \le c_{\psi_0,q} \sum_{j=N}^\infty \textmd{2^{jsq}} \|\op[\phi_j]u\|_{L^p(\T^n,E)}^q \to 0\;(N\to\infty).\]
   As $u_N\in C^\infty(\T^n,E)$, this proves the statement of the theorem.
\end{proof}

In the following, we will write $B_{pq}^{s,\infty}(\T^n,E)$ with $s\in\R$, $p,q\in[1,\infty]$ for $C^\infty(\T^n,E)$ endowed with the topology induced by $B_{pq}^s(\T^n,E)$. Note that the last result states that $B_{pq}^{s,\infty}(\T^n,E)$ is dense in $B_{pq}^s(\T^n,E)$ if $q<\infty$.

The following estimate is the essential step in the proof of the continuity of toroidal pseudodifferential operators in Besov spaces.

\begin{theorem}
  \label{3.15}
  Let $s,m\in\R$, $p,q\in[1,\infty]$, $\rho\in\N$ with $\rho\ge n+1$, and $a\in S^{m,\rho}$. Then
  \[ \op[a]\colon B_{pq}^{s+m,\infty}(\T^n,E) \to B_{pq}^s(\T^n,E)\]
  is linear and continuous with
    \[ \|\op[a]u\|_{B_{pq}^s(\T^n,E)} \le C \|a\|_{S^{m,\rho}} \|u\|_{B_{pq}^{s+m}(\T^n,E)}\quad (u\in B_{pq}^{s,\infty}(\T^n,E))\]
  with a constant $C$ not depending on $a$ or $u$.
\end{theorem}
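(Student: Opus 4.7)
The plan is to exploit the frequency localization provided by the dyadic decomposition and to reduce each dyadic block to an $L^p$-kernel estimate on $\R^n$. Concretely, I extend the discrete symbol $a$ to $\tilde a\in S_{1,0}^{m,\rho}(\R^n,L(E))$ using Proposition~\ref{3.12}, so that $\op[\phi_j a]=\op[(\phi_j\tilde a)|_{\Z^n}]$ can be represented as toroidal convolution with $(2\pi)^{-n/2}\FR^{-1}(\phi_j\tilde a)$ via Lemma~\ref{3.11}, whose $L^1$-norm is controlled by Lemma~\ref{3.9}.

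\textbf{Dyadic block estimate.} Fix $u\in C^\infty(\T^n,E)=B_{pq}^{s+m,\infty}(\T^n,E)$. By Remark~\ref{3.4}~b), $\op[\phi_j]\op[a]=\op[\phi_j a]$. The support property of the dyadic decomposition (more precisely, the construction in Remark~\ref{3.5a}) gives $\phi_j=\phi_j(\phi_{j-1}+\phi_j+\phi_{j+1})$ with the convention $\phi_{-1}:=0$, so applying Remark~\ref{3.4}~b) once more yields
\[ \op[\phi_j]\op[a]u=\sum_{k=\max(j-1,0)}^{j+1}\op[\phi_j\tilde a]\op[\phi_k]u.\]
The compactly supported continuous function $\phi_j\tilde a$ satisfies, by Lemma~\ref{3.9} combined with Proposition~\ref{3.12},
\[ \|\FR^{-1}(\phi_j\tilde a)\|_{L^1(\R^n,L(E))}\le c_{n,m}2^{jm}\|\tilde a\|_{S_{1,0}^{m,n+1}(\R^n,L(E))}\le C\, 2^{jm}\|a\|_{S^{m,\rho}},\]
and Lemma~\ref{3.11} together with Lemma~\ref{3.10}~a) produces
\[ \|\op[\phi_j\tilde a]v\|_{L^p(\T^n,E)}\le C\, 2^{jm}\|a\|_{S^{m,\rho}}\|v\|_{L^p(\T^n,E)}\]
for every $v\in C^\infty(\T^n,E)$; applied to $v=\op[\phi_k]u$ this controls each dyadic block.

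\textbf{Summation.} Set $b_k:=2^{k(s+m)}\|\op[\phi_k]u\|_{L^p(\T^n,E)}$. Multiplying the previous estimate by $2^{js}$ and using $2^{js}2^{jm}=2^{(j-k)(s+m)}2^{k(s+m)}$ with $|j-k|\le 1$ gives
\[ 2^{js}\|\op[\phi_j]\op[a]u\|_{L^p(\T^n,E)}\le C\, 2^{|s+m|}\|a\|_{S^{m,\rho}}\sum_{k=\max(j-1,0)}^{j+1}b_k.\]
Taking the $\ell^q(\N_0)$-norm in $j$ and applying the triangle inequality to each of the three shifted sequences $(b_{j+\ell})_j$ with $\ell\in\{-1,0,1\}$ yields
\[ \|\op[a]u\|_{B_{pq}^s(\T^n,E)}\le C'\|a\|_{S^{m,\rho}}\|(b_k)_k\|_{\ell^q(\N_0)}=C'\|a\|_{S^{m,\rho}}\|u\|_{B_{pq}^{s+m}(\T^n,E)},\]
which is the claim. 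I do not anticipate a serious obstacle: the boundary case $j=0$ is taken care of by the convention $\phi_{-1}:=0$, and the hypotheses of Lemma~\ref{3.11} for $\phi_j\tilde a$ (compact support, continuity, and membership in $\FR(L^1)$) are immediate from $\phi_j\in\mathscr S(\R^n)$ together with Lemma~\ref{3.9}.
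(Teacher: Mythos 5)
Your proposal is correct and follows essentially the same route as the paper's proof: extend $a$ to $\tilde a\in S_{1,0}^{m,\rho}(\R^n,L(E))$ via Proposition~\ref{3.12}, use Lemma~\ref{3.9} and Lemma~\ref{3.11} to bound each dyadic block via the kernel $\FR^{-1}(\phi_j\tilde a)$, localize with $\phi_{j-1}+\phi_j+\phi_{j+1}$, and then sum in $\ell^q$. The only cosmetic difference is that the paper packages the three neighboring blocks as $\op[\chi_j]u$ with $\chi_j:=\phi_{j-1}+\phi_j+\phi_{j+1}$ and estimates once, whereas you split into three terms and use the triangle inequality for the three shifted sequences, which is the same computation.
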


\begin{proof}
  We only consider the case $q<\infty$, as the case $q=\infty$ follows similarly. Let $u\in B_{pq}^{s+m,\infty}(\T^n,E)$. We write \[ \|\op[a] u\|_{B_{pq}^s(\T^n,E)}^q = \sum_{j\in \N_0} 2^{jsq} \big\| \op\big[ \phi_j|_{\Z^n} a\big] u\big\|_{L^p(\T^n,E)}^q.\]
  We set $\chi_j := \phi_{j-1}+\phi_j+\phi_{j+1}$ (then $\chi_j\phi_j=\phi_j$) and extend $a$ to a symbol $\tilde a\in S^{m,\rho}_{1,0}(\R^n,L(E))$ as in Proposition~\ref{3.12}. Lemma~\ref{3.11} yields
  \begin{align*}
    \big\|\op\big[\phi_j|_{\Z^n}a\big]u&\big\|_{L^p(\T^n,E)} = \Big\| \sum_{\bk\in\Z^n} e_{\bk}\otimes \phi_j(\bk) a(\bk) \FT u(\bk)\Big\|_{L^p(\T^n,E)} \\
    & = \Big\| \sum_{\bk\in\Z^n} e_\bk \otimes \phi_j(\bk) \chi_j(\bk) \tilde a(\bk) \FT u(\bk)\Big\|_{L^p(\T^n,E)}\\
    & \le \big\| \FR^{-1} (\phi_j\tilde a)\big\|_{L^1(\R^n,L(E))} \|\op[\chi_j]u\|_{L^p(\T^n,E)}.
  \end{align*}
  Now we make use of the fact that
  \[ \big\| \FR^{-1} (\phi_j\tilde a)\big\|_{L^1(\R^n,L(E))} \le c_{n,m} 2^{jm}\|\tilde a\|_{S^{m,n+1}_{1,0}(\R^n,L(E))},\]
  \textmd{see Lemma~\ref{3.9}. } This gives, with \textmd{Proposition~\ref{3.12},}
  \begin{align*}
    \big\| \op\big[ \phi_j|_{\Z^n} a\big] u\big\|_{L^p(\T^n,E)} & \le c_{n,m,\phi} 2^{jm} \|\tilde a\|_{S^{m,n+1}_{1,0}(\R^n,L(E))} \|\op[\chi_j]u\|_{L^p(\T^n,E)} \\
    & \le c_{n,m,\phi} \textmd{C_\theta} \|a\|_{S^{m,\rho}} 2^{jm} \|\op[\chi_j]u\|_{L^p(\T^n,E)}.
  \end{align*}
  Summing up over $j$, we get
  \begin{align*}
   \|\op[a]u\|_{B_{pq}^s(\T^n,E)}^q &\le (1+2^{sq} + 2^{-sq}) c_{n,m,\phi}^q \textmd{C_\theta^q} \|a\|_{S^{m,\rho}}^q \\
    &\cdot \sum_{j\in\N_0} 2^{j(s+m)q} \|\op[\phi_j]u\|_{L^p(\T^n,E)}^q.
  \end{align*}

  \vspace*{-1em}
\end{proof}

Now we are able to prove one of the main results of the present paper.

\begin{theorem}
  \label{3.16}
  Let $m,s\in\R$, $p,q\in[1,\infty]$, $\rho\in\N$ with $\rho\ge n+1$, and $a\in S^{m,\rho}$. Then
  \[ \op[a]\colon B_{pq}^{s+m}(\T^n,E)\to B_{pq}^s(\T^n,E)\]
  is linear and continuous. \textmd{Moreover,
   \[ \big( a\mapsto \op[a]\big) \in L\Big( S^{m,\rho}, L(B_{pq}^{s+m}(\T^n,E), B_{pq}^s(\T^n,E))\Big).\]}
\end{theorem}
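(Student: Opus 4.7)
The plan is to bootstrap from Theorem~\ref{3.15}, which already supplies the required estimate on the subspace $B_{pq}^{s+m,\infty}(\T^n,E)=C^\infty(\T^n,E)$ with a constant that depends on $a$ only through $\|a\|_{S^{m,\rho}}$. The task is to propagate this estimate to all of $B_{pq}^{s+m}(\T^n,E)$, and this requires a case distinction according to whether $q<\infty$ or $q=\infty$.

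When $q<\infty$, the density of $C^\infty(\T^n,E)$ in $B_{pq}^{s+m}(\T^n,E)$ (Theorem~\ref{3.14}) together with completeness of $B_{pq}^s(\T^n,E)$ yields a unique continuous extension $T\colon B_{pq}^{s+m}(\T^n,E)\to B_{pq}^s(\T^n,E)$ of the operator from Theorem~\ref{3.15}, satisfying the same estimate. One identifies $T$ with $\op[a]$ from Definition~\ref{2.9} via the embedding $B_{pq}^s(\T^n,E)\hookrightarrow\mathscr D'(\T^n,E)$ and the continuity of $\op[a]$ on $\mathscr D'(\T^n,E)$ (a consequence of Remark~\ref{2.4} together with the continuity of multiplication by $a$ on $\mathscr S'(\Z^n,E)$): for $u_N\in C^\infty(\T^n,E)$ with $u_N\to u$ in $B_{pq}^{s+m}(\T^n,E)$ one has $Tu_N=\op[a]u_N$, and taking limits in $\mathscr D'(\T^n,E)$ on both sides yields $Tu=\op[a]u$.

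The case $q=\infty$ requires a direct argument, since density fails. The strategy is to rerun the proof of Theorem~\ref{3.15} verbatim for $u\in B_{p\infty}^{s+m}(\T^n,E)$, observing that with $\chi_j:=\phi_{j-1}+\phi_j+\phi_{j+1}$ one has $\op[\chi_j]u=\sum_{|k-j|\le 1}\op[\phi_k]u\in L^p(\T^n,E)$ by the very definition of the Besov norm. The only obstruction is that Lemma~\ref{3.11} is stated for $f\in C^\infty(\T^n,E)$; the key technical step is therefore to extend it to arguments $v\in L^p(\T^n,E)$, that is, to verify $\op[M]v=(2\pi)^{-n/2}(\FR^{-1}M)\ast v$ with the corresponding norm estimate from Lemma~\ref{3.10}a) whenever $M\in C_c(\R^n,L(E))\cap\FR(L^1(\R^n,L(E)))$. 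For $p<\infty$ this is routine via density of $C^\infty(\T^n,E)$ in $L^p(\T^n,E)$; for $p=\infty$ one uses that $M|_{\Z^n}$ is finitely supported (since $\supp M$ is compact), so $\op[M]v$ is a trigonometric polynomial and the identity can be checked by viewing $v$ as an element of $L^{p'}(\T^n,E)$ for some finite $p'$. With this extension in place, the chain of inequalities from Theorem~\ref{3.15} (using Remark~\ref{3.4}b), Lemma~\ref{3.9}, and Proposition~\ref{3.12}) gives
\[\|\op[\phi_j a]u\|_{L^p(\T^n,E)}\le C\|a\|_{S^{m,\rho}}2^{jm}\|\op[\chi_j]u\|_{L^p(\T^n,E)},\]
and taking $\sup_j 2^{js}$ concludes the desired estimate. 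Finally, the assertion that $a\mapsto\op[a]$ lies in $L(S^{m,\rho},L(B_{pq}^{s+m}(\T^n,E),B_{pq}^s(\T^n,E)))$ is immediate from the linearity of the assignment (clear from Definition~\ref{2.9}) and the fact that the constant in the above estimate is of the form $C\|a\|_{S^{m,\rho}}$, giving the required operator-norm bound. The principal obstacle is the $L^p$-extension of Lemma~\ref{3.11} needed for $q=\infty$; this is not deep but calls for some care, particularly when $p=\infty$.
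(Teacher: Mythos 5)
Your $q<\infty$ argument coincides with the paper's (density of $C^\infty(\T^n,E)$ from Theorem~\ref{3.14} plus Theorem~\ref{3.15}, with the identification of the continuous extension with $\op[a]$ via $\mathscr D'(\T^n,E)$). For $q=\infty$, however, the paper takes a different route: it invokes the real interpolation identity $B_{p\infty}^{r}(\T^n,E)=\big(B_{p1}^{r-1}(\T^n,E),B_{p1}^{r+1}(\T^n,E)\big)_{1/2,\infty}$ for $r\in\{s,s+m\}$ (already recorded in Remark~\ref{3.8}) and deduces boundedness $B_{p\infty}^{s+m}\to B_{p\infty}^s$, together with the continuity of $a\mapsto\op[a]$, from the $q=1$ case by exactness of the real interpolation functor. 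You instead rerun the chain of estimates of Theorem~\ref{3.15} directly on $B_{p\infty}^{s+m}(\T^n,E)$, the only missing ingredient being an extension of Lemma~\ref{3.11} from $C^\infty(\T^n,E)$ to $L^p(\T^n,E)$ arguments. Your sketch of that extension is sound; in fact, since $\T^n$ has finite measure one can uniformly pass to $L^1(\T^n,E)\supset L^p(\T^n,E)$ for every $p\in[1,\infty]$, use density of $C^\infty(\T^n,E)$ in $L^1(\T^n,E)$ and continuity of both sides of the identity as maps $L^1\to L^1$, which treats $p=\infty$ without the detour through $L^{p'}$. Your route is more self-contained and elementary (no interpolation theory, no reliance on the nontrivial identity of Remark~\ref{3.8}), while the paper's is more compact once that identity is granted. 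Both establish the bound $\|\op[a]\|\le C\|a\|_{S^{m,\rho}}$, and hence the assertion that $a\mapsto\op[a]$ belongs to $L\big(S^{m,\rho},L(B_{pq}^{s+m}(\T^n,E),B_{pq}^s(\T^n,E))\big)$.
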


\begin{proof}
  For $q<\infty$, the statement follows immediately from Theorem~\ref{3.15} and the density of $B_{pq}^{s+m,\infty}(\T^n,E)$ in $B_{pq}^{s+m}(\T^n,E)$, Theorem~\ref{3.14}.

  The case $q=\infty$ will be treated with real interpolation theory (see, e.g., \cite{lunardi95} for a survey on interpolation theory). In fact, we have
  \[ B_{p\infty}^{r}(\T^n,E) = \Big( B_{p1}^{\textmd{r-1}}(\T^n,E), B_{p1}^{\textmd{r+1}}(\T^n,E)\Big)_{1/2,\infty}\]
  for $r\in \{s,s+m\}$. Now the continuity of
  \[ \op[a]\colon B_{p1}^{s+m\pm1}(\T^n,E)\to B_{p1}^{s\pm1}(\T^n,E)\]
  and the properties of the real interpolation functor immediately give the continuity of
  \[ \op[a]\colon B_{p\infty}^{s+m}(\T^n,E)\to B_{p\infty}^s(\T^n,E).\]
  \textmd{In the same way, the continuity of the map $a\mapsto \op[a]$ follows.}
\end{proof}

\section{Generation of semigroups for parabolic pseudodifferential operators}

Now we will investigate the generation of analytic semigroups by realizations of toroidal pseudodifferential operators in Besov and Sobolev spaces. \textmd{For this, one of the key ingredients is an estimate of the inverse symbol $(a(\bk)+\lambda)^{-1}$ in the case of a parabolic symbol $a\in S^{m,\rho}$. Therefore, we start with a remark on the discrete derivatives.

\begin{remark}
  \label{4.0}
  Let $a\colon \Z^n\to L(E)$ and $\lambda\in\C$ such that $a(\bk)+\lambda$ is invertible for all $\bk\in\Z^n$. Then the discrete difference $\Delta_\bk^\gamma (a(\bk)+\lambda)^{-1}$ can be written as a finite linear combination of terms of the form
  \begin{equation}
   \label{4-0}
   (a\big(\bk+\beta_0^{(j)})+\lambda\big)^{-1} \prod_{l=1}^j \Big[ (\Delta_\bk^{\alpha^{(l)}} a)(\bk+\beta_1^{(l)}) \big( a(\bk+\beta_2^{(l)})+\lambda\big)^{-1}\Big],
   \end{equation}
  where $\alpha^{(l)}, \beta_i^{(l)}\in\N_0^n$ and $j\in\{1,\dots,|\gamma|\}$, and where $\sum_{l=1}^j |\alpha^{(l)}| = |\gamma|$.

  This statement follows by induction in a straightforward way, based on the discrete Leibniz formula
  \begin{equation}
    \label{4-0a}
\Delta_\bk^\alpha (fg)(\bk) = \sum_{\substack{\beta\in\N_0\\ \beta\le\alpha}} \binom \alpha\beta \big[ \Delta_\bk^\beta f\big](\bk) \big[\Delta_\bk^{\alpha-\beta} g\big](\bk+\beta)\quad (\bk\in\Z^n)
 \end{equation}
  for $f,g\colon\Z^n\to L(E)$, which can be found (in the scalar case) in \cite{ruzhansky-turunen10}, Lemma~II.3.3.6.
\end{remark}
}
\textmd{Throughout this section, we } fix $s\in\R$, $m\in (0,\infty)$, $p,q\in[1,\infty]$ and $\rho\in\N$ with $\rho\ge n+1$. For $a\in S^{m,\rho}$, we denote its $B_{pq}^{s}(\T^n,E)$-realization by $\opp a$, i.e. we define  $\opp a$ as an unbounded operator  in \textmd{$B_{pq}^s(\T^n,E)$ } with domain $D(\opp a) := B_{pq}^{s+m}(\T^n,E)$ acting as $\opp a u := \op[a]u\;(u\in D(\opp a))$.

\begin{definition}
  \label{4.1}
  The symbol $a\in S^{m,\rho}$ is called parabolic with constants $\omega\ge 0$ and $\kappa>0$ if for all \textmd{$(\bk,\lambda)\in\Z^n\times \C$ with $\Re\lambda\ge 0$ } and $|(\bk,\lambda)|\ge \omega$ we have that $a(\bk)+\lambda\colon E\to E$ is bijective and
  \[ \| (a(\bk)+\textmd{\lambda})^{-1} \|_{L(E)} \le \kappa \langle\bk,\textmd{\lambda}\rangle^{-m}.\]
  Here, \textmd{$\langle \bk,\lambda\rangle := (1+|\bk|^2+|\lambda|^{2/m})^{1/2}$ and $|(\bk,\lambda)| := (|\bk|^2+|\lambda|^{2/m})^{1/2}$.}
  We write $P_{\omega,\kappa}S^{m,\rho}$ for the set of all symbols in $S^{m,\rho}$ which are parabolic with constants $\omega$ and $\kappa$ and endow this set with the topology induced by the topology in $S^{m,\rho}$.
\end{definition}

In the following, for $R\ge 0$ and $\theta\in (0,\pi]$ let
\[ \Sigma_{\theta,R} := \{\lambda\in\C: |\lambda|\ge R, \, |\arg(\lambda)|\le \theta\}.\]

\begin{theorem}
  \label{4.2} Let $\omega\ge 0$ and $\kappa>0$, and let $\mathcal A\subset P_{\omega,\kappa}S^{m,\rho}$ be bounded. Then, for each $a\in \mathcal A$ its $B_{pq}^{s}(\T^n,E)$-realization $\opp a$ satisfies $\Sigma_{\pi/2,\omega^m}\subset\rho(-\opp a)$ and
  \[ (1+|\lambda|)^{1-j} \|(\opp a+\lambda)^{-1} \|_{L(B_{pq}^s(\T^n,E), B_{pq}^{s+jm}(\T^n,E))} \le C\quad (\lambda\in \Sigma_{\pi/2,\omega^m},\, j=0,1),\]
  where $C$ is independent of $\lambda$ and $a$. Furthermore, for all $a\in\mathcal A$ and $\lambda\in \Sigma_{\pi/2,\omega^m}$ we have $(\opp a+\lambda)^{-1} = \textmd{\op[b_\lambda]}\big|_{B_{pq}^s(\T^n,E)}$ where $b_\lambda(\bk) := (a(\bk)+\lambda)^{-1}\;(\bk\in\Z^n)$.
\end{theorem}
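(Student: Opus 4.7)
The plan is to identify the resolvent $(\opp a+\lambda)^{-1}$ with the pseudodifferential operator $\op[b_\lambda]$, where $b_\lambda(\bk):=(a(\bk)+\lambda)^{-1}$, by verifying that $b_\lambda$ belongs to both $S^{-m,\rho}$ (uniformly in $\lambda\in\Sigma_{\pi/2,\omega^m}$ and in $a\in\mathcal A$) and to $S^{0,\rho}$ with $S^{0,\rho}$-norm of order $(1+|\lambda|)^{-1}$, and then to use the composition identity from Remark~\ref{3.4}~b) to conclude.

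First, for $\lambda\in\Sigma_{\pi/2,\omega^m}$ one has $|\lambda|^{2/m}\ge\omega^2$, hence $|(\bk,\lambda)|\ge\omega$ for every $\bk\in\Z^n$, so the parabolicity assumption yields $\|b_\lambda(\bk)\|_{L(E)}\le\kappa\langle\bk,\lambda\rangle^{-m}$ throughout $\Z^n$. For $|\gamma|\ge 1$, Remark~\ref{4.0} writes $\Delta_\bk^\gamma b_\lambda(\bk)$ as a finite linear combination of terms of the form~\eqref{4-0} with $1\le j\le|\gamma|$, $\sum_l|\alpha^{(l)}|=|\gamma|$, and translation shifts $\beta_i^{(l)}\in\N_0^n$ whose components are bounded in terms of $\rho$ alone. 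Combining the parabolicity bound on the $j+1$ resolvent factors, the symbol estimate $\|\Delta_\bk^{\alpha^{(l)}}a(\bk+\beta_1^{(l)})\|_{L(E)}\le\|a\|_{S^{m,\rho}}\langle\bk+\beta_1^{(l)}\rangle^{m-|\alpha^{(l)}|}$, and the equivalences $\langle\bk+\beta\rangle\asymp\langle\bk\rangle$ and $\langle\bk+\beta,\lambda\rangle\asymp\langle\bk,\lambda\rangle$ (uniform in the bounded translations), each such summand is dominated by
\[ C(\rho,\kappa)\,\|a\|_{S^{m,\rho}}^{j}\,\langle\bk,\lambda\rangle^{-m(j+1)}\,\langle\bk\rangle^{mj-|\gamma|}. \]
Since $mj\ge 0$ and $\langle\bk\rangle\le\langle\bk,\lambda\rangle$, this reduces to $C\,\langle\bk,\lambda\rangle^{-m}\langle\bk\rangle^{-|\gamma|}$, uniformly in $j$.

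Two elementary bounds now produce the two desired symbol estimates. The inequality $\langle\bk,\lambda\rangle^{-m}\le\langle\bk\rangle^{-m}$ gives $b_\lambda\in S^{-m,\rho}$ with norm uniform in $\lambda\in\Sigma_{\pi/2,\omega^m}$ and $a\in\mathcal A$; the inequality $\langle\bk,\lambda\rangle^m\ge c(1+|\lambda|)$ (verified by splitting into the cases $|\lambda|\le 1$ and $|\lambda|\ge 1$) gives $\|b_\lambda\|_{S^{0,\rho}}\le C(1+|\lambda|)^{-1}$, again uniformly. Applying Theorem~\ref{3.16} to these two bounds delivers the mappings $\op[b_\lambda]\colon B_{pq}^s(\T^n,E)\to B_{pq}^{s+m}(\T^n,E)$ with uniformly bounded norm, and $\op[b_\lambda]\colon B_{pq}^s(\T^n,E)\to B_{pq}^s(\T^n,E)$ with norm of order $(1+|\lambda|)^{-1}$, which are precisely the estimates claimed for $j=1$ and $j=0$. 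Finally, since $a+\lambda\in S^{m,\rho}$ and $b_\lambda\in S^{-m,\rho}$, Remark~\ref{3.4}~b) yields $\op[a+\lambda]\op[b_\lambda]=\op[(a+\lambda)b_\lambda]=\op[I_E]=\id$ and likewise on the other side, as operators on $\mathscr D'(\T^n,E)$, hence on $B_{pq}^s(\T^n,E)$. This shows $\lambda\in\rho(-\opp a)$ with $(\opp a+\lambda)^{-1}=\op[b_\lambda]|_{B_{pq}^s(\T^n,E)}$.

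The main obstacle I anticipate is the bookkeeping in Remark~\ref{4.0}: one must verify that the number of summands in the decomposition and the sizes of the translation indices $\beta_i^{(l)}$ can be controlled purely in terms of $|\gamma|\le\rho$, so that after summation over $|\gamma|\le\rho$ all constants depend only on $n$, $m$, $\rho$, $\omega$, $\kappa$, and $\sup_{a\in\mathcal A}\|a\|_{S^{m,\rho}}$. Once this bookkeeping is in place, the remainder of the argument amounts to the two elementary inequalities between $\langle\bk,\lambda\rangle$, $\langle\bk\rangle$, and $(1+|\lambda|)$, followed by an invocation of Theorem~\ref{3.16} and of the composition rule.
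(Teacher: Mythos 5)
Your proposal is correct and follows essentially the same route as the paper's proof: both reduce the resolvent identity to showing $b_\lambda\in S^{-m,\rho}$ uniformly and $\|b_\lambda\|_{S^{0,\rho}}\le C(1+|\lambda|)^{-1}$, via the decomposition in Remark~\ref{4.0} together with the elementary comparisons between $\langle\bk\rangle$, $\langle\bk,\lambda\rangle$, and $1+|\lambda|$, and then invoke Theorem~\ref{3.16} and Remark~\ref{3.4}~b). The paper only sketches this argument (citing the analogous continuous result of Amann), whereas you supply the bookkeeping explicitly; the details you fill in, including the shift-equivalences $\langle\bk+\beta\rangle\asymp\langle\bk\rangle$ and $\langle\bk+\beta,\lambda\rangle\asymp\langle\bk,\lambda\rangle$ and the check that the parabolicity hypothesis applies at the shifted arguments, are exactly what is needed.
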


\begin{proof}
  As the proof is similar to the proof of the analog result in the continuous case in \cite{amann97}, Theorem~7.2, we only indicate the main steps. Using the parabolicity and continuity assumptions, it is straightforward to see that
  \begin{align*}
    \langle\bk\rangle^m \|(a(\bk)+\lambda)^{-1}\|_{L(E)} & \le\kappa,\\
    (1+|\lambda|) \|(a(\bk)+\lambda)^{-1} \|_{L(E)} & \le 2\kappa
  \end{align*}
  holds for all $\bk\in \Z^n$, $\lambda\in \Sigma_{\pi/2,\omega^m}$ and $a\in \mathcal A$. \textmd{To deal with the discrete derivatives, we use the description \eqref{4-0} from Remark~\ref{4.0} and obtain the estimate}
  \[ \langle \bk\rangle^{|\alpha|+jm} (1+|\lambda|)^{1-j} \|\Delta_\bk^\alpha (a(\bk)+\lambda)^{-1} \|_{L(E)} \le C\]
  for all $|\alpha|\le \rho$, $\bk\in\Z^n$, $\lambda\in\Sigma_{\pi/2,\omega^m}$, $a\in\mathcal A$, and $j=0,1$. Therefore, $b_\lambda = (a+\lambda)^{-1} \in S^{-m,\rho}$ and
  \[ (1+|\lambda|) \|(a+\lambda)^{-1}\|_{S^{0,\rho}} \le C\quad (\lambda\in \Sigma_{\pi/2,\omega^m},\, a\in\mathcal A).\]
  Now \textmd{Theorem~\ref{3.16} } implies that
  \[ (1+|\lambda|)^{1-j} \|\textmd{\op[b_\lambda}]\|_{L(B_{pq}^s(\T^n,E), B_{pq}^{s+jm}(\T^n,E))} \le C.\]
  As we also have $\textmd{\op[a]}+\lambda\in L(B_{pq}^{s+m}(\T^n,E), B_{pq}^s(\T^n,E))$ by Theorem~\ref{3.16} and due to
  \begin{align*}
    (\textmd{\op[a]}+\lambda) b_\lambda(D) & = \id_{B_{pq}^s(\T^n,E)},\\
    \textmd{\op[b_\lambda]} (\textmd{\op[a]}+\lambda) & = \id_{B_{pq}^{s+m}(\T^n,E)}
  \end{align*}
  by Remark ~\ref{3.4} b), we see that $\Sigma_{\pi/2,\omega^m}\subset\rho(-\opp a)$ and $(\opp a+\lambda)^{-1} =\textmd{\op[b_\lambda]}|_{B_{pq}^s(\T^n,E)}$.
\end{proof}

\begin{corollary}
  \label{4.3}
  Let $a\in S^{m,\rho}$ be parabolic, and let $\opp a$ be its $B_{pq}^s(\T^n,E)$-realization. Then $-\opp a$ generates an analytic semigroup in $B_{pq}^s(\T^n,E)$.
\end{corollary}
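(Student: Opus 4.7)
The plan is to combine the resolvent estimate from Theorem~\ref{4.2} with a standard Neumann-series extension of the resolvent sector, and then invoke the classical characterization of analytic semigroup generators via sectoriality. Theorem~\ref{4.2}, applied to the singleton $\mathcal A=\{a\}$, already provides
\[
\Sigma_{\pi/2,\omega^m}\subset \rho(-\opp{a}),\qquad \|(\opp{a}+\lambda)^{-1}\|_{L(B_{pq}^s(\T^n,E))}\le \frac{C}{1+|\lambda|}
\]
for $\lambda\in\Sigma_{\pi/2,\omega^m}$. The only missing ingredient for sectoriality is an opening angle strictly larger than $\pi/2$.

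To enlarge the opening, I would use the standard Neumann-series argument at the operator level. For $\lambda_0\in\Sigma_{\pi/2,\omega^m}$ and every $\lambda\in\C$ with $|\lambda-\lambda_0|<(1+|\lambda_0|)/(2C)$, the series
\[
(\opp{a}+\lambda)^{-1}=\sum_{k=0}^{\infty}(\lambda_0-\lambda)^k(\opp{a}+\lambda_0)^{-k-1}
\]
converges in $L(B_{pq}^s(\T^n,E))$ with norm bounded by $2C/(1+|\lambda_0|)$. Choosing $\lambda_0$ on (or near) the imaginary axis with $|\lambda_0|\ge\omega^m$ and taking the union of the resulting disks produces a sector $\Sigma_{\pi/2+\delta,R}$ with $\delta=\arctan(1/(2C))>0$ and some $R\ge\omega^m$, on which the resolvent is holomorphic and the bound $\|(\opp{a}+\lambda)^{-1}\|_{L(B_{pq}^s(\T^n,E))}\le C'/(1+|\lambda|)$ persists after a straightforward geometric adjustment.

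With this estimate on an open sector of angle strictly greater than $\pi/2$, a translation $-\opp{a}-\omega_1$ (with $\omega_1>0$ chosen so that the shifted sector $\omega_1+\Sigma_{\pi/2+\delta',0}$ is contained in $\Sigma_{\pi/2+\delta,R}$ for some $\delta'\in(0,\delta)$) is sectorial in the standard sense of \cite{lunardi95}, so $-\opp{a}$ generates an analytic semigroup on $B_{pq}^s(\T^n,E)$. Closedness of $\opp{a}$ comes for free from Theorem~\ref{3.16} together with the continuous embedding $B_{pq}^{s+m}(\T^n,E)\hookrightarrow B_{pq}^s(\T^n,E)$. I do not anticipate any substantial obstacle, since Theorem~\ref{4.2} has already done the hard analytic work; the only point requiring a remark is the case $q=\infty$, where $D(\opp{a})=B_{pq}^{s+m}(\T^n,E)$ fails to be dense in $B_{pq}^s(\T^n,E)$ (compare Theorem~\ref{3.14}), so the generated semigroup is analytic but not strongly continuous at $t=0$, as is customary for such scales.
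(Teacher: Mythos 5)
Your argument is correct and reaches the same conclusion via Theorem~\ref{4.2}, but the mechanism for widening the sector past angle $\pi/2$ differs slightly from the paper's. You work at the operator level: given the resolvent bound on $\Sigma_{\pi/2,\omega^m}$, a Neumann-series perturbation enlarges the resolvent set to a sector $\Sigma_{\pi/2+\delta,R}$ and preserves the $O(1/(1+|\lambda|))$ bound there. The paper instead works at the symbol level: it observes that the parabolicity condition of Definition~\ref{4.1} (invertibility of $a(\bk)+\lambda$ with the estimate $\|(a(\bk)+\lambda)^{-1}\|\le\kappa\langle\bk,\lambda\rangle^{-m}$) is an open condition in $\arg\lambda$, hence already holds on a sector $\Sigma_{\theta,R}$ with some $\theta>\pi/2$, so Theorem~\ref{4.2} directly delivers the resolvent estimate on the wider sector. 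Both routes are standard and correct; in fact the cited result \cite{lunardi95}, Prop.~2.1.11 internally performs precisely your Neumann-series step, so the two proofs are essentially interchangeable. Your added remarks -- closedness of $\opp a$ from Theorem~\ref{3.16}, and the failure of strong continuity at $t=0$ when $q=\infty$ because $B_{pq}^{s+m}(\T^n,E)$ is then not dense -- are accurate and worth recording, though the paper leaves them implicit, the Lunardi framework allowing non-densely-defined sectorial operators.
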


\begin{proof}
  We know from Theorem~\ref{4.2} that $\Sigma_{\pi/2,R}\subset\rho(-\opp a)$ and
  \[ \sup_{\lambda\in\Sigma_{\pi/2,R}} (1+|\lambda|) \|(\opp a+\lambda)^{-1}\|_{L(E)} <\infty\]
  holds for sufficiently large $R>0$. Due to the fact that the set of all angles where the parabolicity conditions hold is open, we can replace $\Sigma_{\pi/2,R}$ by $\Sigma_{\theta,R}$ with some $\theta\in (\frac\pi 2,\pi]$. Now the statement follows from standard semigroup theory (see, e.g., \cite{lunardi95}, Chapter~2).
\end{proof}

Whereas in the results above, the generation of semigroups in Besov spaces could be shown quite easily, the same question in the context of Sobolev spaces $W_p^k(\T^n,E)$ is more difficult to answer. \textmd{We start with some preliminary remarks. }

Throughout the following, we fix $\omega\ge0$, $\kappa>0$ and a bounded subset $\mathcal A\subset P_{\omega,\kappa}S^{m,\rho}$. \textmd{We also fix a positive $R$ with $R\ge\omega^m$. }
 We start with a preliminary estimate, where in the following $C$ stands for a generic constant which may vary from one appearance to another.

\begin{lemma}
  \label{4.4}
  Let $\theta_0 := \frac12\min\{m,1\}$ and $\theta_1:=1-\theta_0$. For all $\mu\ge \mu_0>0$ we have
  \begin{align}
    \sum_{\ell\in\mu^{-1}\Z^n} |\ell|^{\theta_0} \textmd{(1+\mu^2+|\mu\ell|^2)^{-m}} \langle\mu\ell\rangle^{m-n}  & \le C \mu^{-m}\label{4-1},\\
     \sum_{\ell\in\mu^{-1}\Z^n} |\ell|^{\theta_1} \textmd{(1+\mu^2+|\mu\ell|^2)^{-m}} \langle \mu\ell\rangle^{m-n-1} & \le C\mu^{-m-1}\label{4-2}.
\end{align}
\end{lemma}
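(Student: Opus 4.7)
The plan is as follows. After the substitution $\bk = \mu\ell$ (so $\ell = \mu^{-1}\bk$ with $\bk$ ranging over $\Z^n$), both sums take the unified form
$$\mu^{-\theta}\sum_{\bk\in\Z^n}|\bk|^\theta\langle\bk\rangle^{m-n-j}(1+\mu^2+|\bk|^2)^{-m},$$
with $(\theta,j)=(\theta_0,0)$ for \eqref{4-1} and $(\theta,j)=(\theta_1,1)$ for \eqref{4-2}. So it suffices to establish
$$\sum_{\bk\in\Z^n}|\bk|^\theta\langle\bk\rangle^{m-n-j}(1+\mu^2+|\bk|^2)^{-m}\le C\mu^{\theta-m-j}\qquad(\mu\ge\mu_0)$$
in these two cases.

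Next I would use $|\bk|^\theta\le\langle\bk\rangle^\theta$ and replace the sum by an integral over $\R^n$ via a unit-cube comparison: the integrand $F(x):=\langle x\rangle^{\theta+m-n-j}(1+\mu^2+|x|^2)^{-m}$ varies slowly on each cube $\bk+[-\frac12,\frac12)^n$ with $|\bk|\ge 2$, so $\sum_{|\bk|\ge 2}F(\bk)\le C\int_{\R^n}F(x)\,dx$ up to a dimensional constant; the remaining finitely many terms are $O(\mu^{-2m})$, which is absorbed in $\mu^{\theta-m-j}$ since $\theta-m-j\ge -2m$ holds in both cases for $\mu\ge\mu_0$.

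Then I would split $\int_{\R^n}F(x)\,dx$ into the ranges $|x|\le\mu$ and $|x|>\mu$. On $|x|\le\mu$ one has $(1+\mu^2+|x|^2)^{-m}\le C\mu^{-2m}$, and polar coordinates give
$$\int_{|x|\le\mu}\langle x\rangle^{\theta+m-n-j}\,dx\le C\mu^{\theta+m-j},$$
which uses $\theta+m-j>0$, so the combined contribution is $C\mu^{\theta-m-j}$. On $|x|>\mu$, use $1+\mu^2+|x|^2\ge|x|^2$ and $\langle x\rangle\le C|x|$ to bound this piece by
$$C\int_{|x|>\mu}|x|^{\theta-m-n-j}\,dx\le C\mu^{\theta-m-j},$$
where convergence at infinity requires $\theta-m-j<0$.

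The real obstacle is not analytic but a bookkeeping check: one has to verify the four strict inequalities
$$\theta_0+m>0,\quad \theta_1+m-1>0,\quad \theta_0<m,\quad \theta_1<m+1$$
in both regimes $m\in(0,1]$ and $m>1$. This is exactly what the definition $\theta_0=\frac12\min\{m,1\}$, $\theta_1=1-\theta_0$ is engineered for: it keeps $\theta_0$ strictly between $0$ and $m$ and $\theta_1$ strictly between $1-m$ and $m+1$, making all four bounds strict and giving room for the region estimates above to be uniform in $\mu\ge\mu_0$.
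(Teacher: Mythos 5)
Your argument is correct and reaches the same conclusion, but it differs from the paper's route in execution. The paper stays entirely in the discrete setting: it establishes two auxiliary lattice-sum bounds (estimates \eqref{4-3} and \eqref{4-4} in the original proof) by counting points on $\ell^\infty$-annuli $Y_N$ and $Z_N$ with $\card Y_N\lesssim N^{n-1}$, and then feeds everything through the pointwise algebraic inequality $(1+\mu^2+|\mu\ell|^2)^{-m}\le\mu^{-m}\langle\ell\rangle^{-m}\langle\mu\ell\rangle^{-m}$, which cleanly splits off the $\mu^{-m}$ factor and leaves two decaying weights to estimate separately for $|\ell|_\infty\le 1$ and $|\ell|_\infty>1$. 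You instead rescale back to $\bk\in\Z^n$, pass to a Euclidean integral by unit-cube comparison, and split at $|x|=\mu$; the four exponent inequalities you list ($\theta_0+m>0$, $\theta_1+m-1>0$, $\theta_0<m$, $\theta_1<m+1$) are precisely what the paper's hypotheses $\theta>0$ in \eqref{4-3} and $\theta\in(0,m)$ in \eqref{4-4} boil down to, so the bookkeeping is the same. Both approaches split at $|\ell|\sim 1$ in disguise. Your integral version is perhaps slightly more familiar and avoids the factoring inequality (4-5), at the cost of having to justify the sum-to-integral comparison (which does hold here: for $|\bk|\ge 2$ and $x$ in the unit cube about $\bk$ one has $\langle\bk\rangle\sim\langle x\rangle$ and $1+\mu^2+|\bk|^2\sim 1+\mu^2+|x|^2$ with dimensional constants, uniformly in $\mu$) and to treat the finitely many small $\bk$ separately. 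The paper's version is more self-contained and elementary in that it never leaves $\Z^n$, which fits the toroidal/discrete spirit of the article. Either way, your handling of the origin term (vanishes since $\theta_j>0$) and of the near-origin terms (absorbed because $\theta_j> j-m$) is correct, so the proof goes through.
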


\begin{proof}
  \textbf{(i)} We will distinguish the cases $|\ell|_\infty\le 1$ and $|\ell|_\infty>1$ and first show that
  \begin{align}
    \sum_{\substack{\ell\in\mu^{-1}\Z^n\setminus \{0\} \\ |\ell|_\infty\le 1}} |\ell|^{\theta-n} \mu^{-n} & \le C_\theta && (\theta>0) ,\label{4-3}\\
    \sum_{\substack{\ell\in\mu^{-1}\Z^n \\ |\ell|_\infty> 1}} |\ell|^{\theta-n-m}\mu^{-n} & \le C_\theta && (\theta\in (0,m)) \label{4-4}.
  \end{align}
  To prove \eqref{4-3}, we set $Y_N := \{ \bk\in\Z^n: |\bk|_\infty = N\},\, N\in\N$. Then $\Z^n\setminus\{0\}=\bigcup_{N\in\N} Y_N$ and $\card Y_N\le C N^{n-1}$. Therefore,
  \begin{align*}
    \sum_{\substack{\ell\in\mu^{-1}\Z^n\setminus \{0\}\\|\ell|_\infty\le 1}} &|\ell|^{\theta-n} \mu^{-n} =
    \sum_{\substack{\bk\in\Z^n\setminus \{0\}\\ |\bk|_\infty\le \mu}} |\bk|^{\theta-n} \mu^{-\theta}\le \mu^{-\theta} \sum_{N=1}^{[\mu]+1}\sum_{\bk\in Y_N} |\bk|^{\theta-n} \\
    & \le C \mu^{-\theta} \sum_{N=1}^{[\mu]+1} N^{n-1} N^{\theta-n} \le C\mu^{-\theta} \int_0^{[\mu]} x^{\theta-1} dx \le C_\theta<\infty.
  \end{align*}

  For the proof of \eqref{4-4}, we define $Z_N := \{\ell\in\mu^{-1}\Z^n: N-1<|\ell|_\infty\le N\}$. Then $\card Z_N \le C \mu^n N^{n-1}$, and we get
  \begin{align*}
    \sum_{\substack{\ell\in\mu^{-1}\Z^n \\ |\ell|_\infty> 1}} & |\ell|^{\theta-n-m}\mu^{-n} =  \sum_{N=2}^\infty \sum_{\ell\in Z_N} |\ell|^{\theta-n-m}\mu^{-n} \\
    & \le C \sum_{N=2}^\infty N^{n-1} (N-1)^{\theta-n-m} \le  C \sum_{N=1}^\infty N^{\theta-m-1} =C_\theta<\infty.
  \end{align*}

  \textbf{(ii)} Now we prove \eqref{4-1}. We use the inequality
  \begin{equation}
     \label{4-5}
     \textmd{(1+\mu^2+|\mu\ell|^2)^{-m}} \le \mu^{-m} \langle \ell\rangle^{-m} \langle \mu\ell\rangle^{-m}.
   \end{equation}
  For the sum over all $\ell\in\mu^{-1}\Z^n$ with $|\ell|_\infty\le 1$, we obtain with \eqref{4-3},  with $\langle \ell\rangle\approx C$ and with $\langle\mu\ell\rangle\ge \mu|\ell|$
  \begin{align*}
    \mu^{m}\sum_{\substack{\ell\in\mu^{-1}\Z^n\\ |\ell|_\infty\le 1}} |\ell|^{\theta_0} & \textmd{(1+\mu^2+|\mu\ell|^2)^{-m}} \langle\mu\ell\rangle^{m-n} \le \sum_{\substack{\ell\in\mu^{-1}\Z^n\\ |\ell|_\infty\le 1}} |\ell|^{\theta_0}
        \langle\mu\ell\rangle^{-n}\\
        &\le \sum_{\substack{\ell\in\mu^{-1}\Z^n\textmd{\setminus\{0\}}\\ |\ell|_\infty\le 1}} |\ell|^{\theta_0-n} \mu^{-n}\le C_{\theta_0}.
  \end{align*}
  For $\ell\in\mu^{-1}\Z^n$ with $|\ell|_\infty>1$, we use  \eqref{4-5} to see that
  \begin{align*}
     \mu^m\sum_{\substack{\ell\in\mu^{-1}\Z^n\\ |\ell|_\infty>1}} & |\ell|^{\theta_0} \textmd{(1+\mu^2+|\mu\ell|^2)^{-m}} \langle\mu\ell\rangle^{m-n} \le \sum_{\substack{\ell\in\mu^{-1}\Z^n\\ |\ell|_\infty>1}}|\ell|^{\theta_0-n-m}\mu^{-n} \le C_{\theta_0},
  \end{align*}
  where the last inequality follows from \eqref{4-4}.

    \textbf{(iii)} Finally, we show that \eqref{4-2} holds. For $\ell\in\mu^{-1}\Z^n$ with $|\ell|_\infty\le 1$, we write
  \begin{align}
   \mu^{m+1}\sum_{\substack{\ell\in\mu^{-1}\Z^n\\ |\ell|_\infty\le 1}} & |\ell|^{\theta_1} \textmd{(1+\mu^2+|\mu\ell|^2)^{-m}} \langle \mu\ell\rangle^{m-n-1} \nonumber\\
   & \le\sum_{\substack{\ell\in\mu^{-1}\Z^n\\ |\ell|_\infty\le 1}} \mu^{-m+1} |\ell|^{\theta_1} \langle\mu\ell\rangle^{m-n-1}\nonumber \\
   & \le  \sum_{\substack{\ell\in\mu^{-1}\Z^n\setminus\{0\}\\ |\ell|_\infty\le 1}} |\ell|^{\theta_1-n}\mu^{-n} \Big[\mu^{1-m} \langle\mu\ell\rangle^{m-1}\Big].\label{4-6}
   \end{align}
   If $m\ge 1$, then $\langle\mu\ell\rangle^{m-1} \le   \langle \mu\rangle^{m-1}\le C \mu^{m-1}$ due to $|\ell|_\infty\le 1$, with $C$ depending on $\mu_0$. Therefore, the bracket $[\ldots]$ in \eqref{4-6} is bounded by a constant, and consequently the sum in \eqref{4-6} is bounded by a constant, too, due to \eqref{4-3} with $\theta:=\theta_1$.

   If $m< 1$, we estimate $\langle\mu\ell\rangle\ge \mu|\ell|$ and see that the sum in \eqref{4-6} is not greater than
   \[  \sum_{\substack{\ell\in\mu^{-1}\Z^n\setminus\{0\}\\ |\ell|_\infty\le 1}} \ell^{\theta_1+m-1-n}\mu^{-n} \le C_{\theta_1},\]
    applying \eqref{4-3} with $\theta:= \theta_1+m-1 =m-\theta_0 >0$. In both cases we see that the sum in \eqref{4-6} is bounded by a constant.

  We still have to estimate the sum \eqref{4-2} over all $\ell\in\mu^{-1}\Z^n$ with $|\ell|_\infty > 1$. If $m\ge 1$, we use $\mu\langle\mu\ell\rangle^{-1} \le 1$ and get
  \begin{align*}
  \mu^{m+1}\sum_{\substack{\ell\in\mu^{-1}\Z^n\\ |\ell|_\infty> 1}}  &|\ell|^{\theta_1} \textmd{(1+\mu^2+|\mu\ell|^2)^{-m}}  \langle \mu\ell\rangle^{m-n-1} \\
  & \le  \mu^m \sum_{\substack{\ell\in\mu^{-1}\Z^n\\ |\ell|_\infty> 1}}  |\ell|^{\theta_1} \textmd{(1+\mu^2+|\mu\ell|^2)^{-m}} \langle \mu\ell\rangle^{m-n} \le C_{\theta_1},
  \end{align*}
  where the last inequality was already shown in part (ii) of the proof.

  For $m<1$ we apply the inequalities $\langle\mu\ell\rangle^{m-n-1}\le \mu^{m-n-1}|\ell|^{m-n-1}$,  $ \langle \mu\ell,\mu\rangle^{-2m}\le \mu^{-2m}|\ell|^{-2m}$ and $ 1<|\ell|_\infty \le |\ell|$ to obtain
  \begin{align*}
  \mu^{m+1}\sum_{\substack{\ell\in\mu^{-1}\Z^n\\ |\ell|_\infty> 1}}  &|\ell|^{\theta_1} \langle\mu\ell,\mu\rangle^{-2m} \langle \mu\ell\rangle^{m-n-1} \\
  & \le  \sum_{\substack{\ell\in\mu^{-1}\Z^n\\ |\ell|_\infty> 1}}  |\ell|^{\theta_1-1-n-m}  \mu^{-n}  \\
  & \le  \sum_{\substack{\ell\in\mu^{-1}\Z^n\\ |\ell|_\infty> 1}} |\ell|^{(\theta_1+m-1)-n-m} \mu^{-n}
   \le C_{\theta_1},
  \end{align*}
  now using \eqref{4-4} with $\theta:= \theta_1+m-1 = m-\theta_0 \in (0,m)$.

\end{proof}

\begin{lemma}
  \label{4.5}
a) For all $\gamma\in\N_0^n\setminus\{0\}$ with $|\gamma|\le \rho$, $a\in\mathcal A$, and $\lambda\in\Sigma_{\pi/2,R}$ we have
  \begin{equation}
    \label{4-7}
    \textmd{\big\| \Delta_{\bk}^\gamma (a(\bk)+\lambda)^{-1}\big\|_{L(E)} \le C \langle\bk,\lambda\rangle^{-2m} \langle\bk\rangle^{m-|\gamma|}\quad (\bk\in\Z^n).}
  \end{equation}

  b) Let $\varphi\in\mathscr S(\R)$ with $\varphi(0)=1$. For $\epsilon\in (0,1)$ set $\varphi_\epsilon(x) := \varphi(\epsilon x)\;(x\in\R)$ and $\chi_\epsilon(\xi,\textmd{\lambda}) := \varphi_\epsilon ((|\xi|^2+\textmd{|\lambda|^{2/m}})^{1/2})\;(\xi\in\R^n,\,\textmd{\lambda\in \Sigma_{\pi/2,R}})$. Then for all $\gamma\in\N_0^n$ and $\textmd{\lambda\in\C\setminus\{0\}}$ we have
  \[ \textmd{\big| \Delta_{\bk} ^\gamma \chi_\epsilon(\bk,\lambda)\big| \le C \langle\bk, \lambda\rangle^{-|\gamma|}\quad (\bk\in \Z^n)}\]
  with a constant $C$ depending on $\gamma$ but not on $\epsilon$ or $\lambda$.

  c) For all $\gamma\in\N_0^n$ with $0<|\gamma|\le \rho$, $a\in\mathcal A$ and $\lambda\in\Sigma_{\pi/2,R}$ we have
  \[ \textmd{\big\| \Delta_{\bk}^\gamma \big[ \chi_\epsilon(\bk,\lambda)(a(\bk)+\lambda)^{-1}\big]\big\|_{L(E)} \le C \langle\bk,\lambda\rangle^{-2m} \langle\bk\rangle^{m-|\gamma|}\quad (\bk\in \Z^n).}\]
\end{lemma}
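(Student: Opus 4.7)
The plan is to apply the discrete Leibniz formula (4-0a) from Remark~\ref{4.0} to the product $\chi_\epsilon(\bk,\lambda)(a(\bk)+\lambda)^{-1}$, obtaining
\[
\Delta_\bk^\gamma\bigl[\chi_\epsilon(\bk,\lambda)(a(\bk)+\lambda)^{-1}\bigr]=\sum_{0\le\beta\le\gamma}\binom{\gamma}{\beta}\bigl[\Delta_\bk^\beta\chi_\epsilon\bigr](\bk,\lambda)\cdot\bigl[\Delta_\bk^{\gamma-\beta}(a+\lambda)^{-1}\bigr](\bk+\beta),
\]
and to estimate each summand by a combination of parts (a) and (b) together with the parabolicity of $a$. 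Throughout, I would exploit the elementary asymptotic equivalences $\langle\bk+\beta,\lambda\rangle\asymp\langle\bk,\lambda\rangle$ and $\langle\bk+\beta\rangle\asymp\langle\bk\rangle$, valid uniformly for $|\beta|\le|\gamma|$ with constants depending only on $|\gamma|$, to freely interchange shifted and unshifted weights.

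First I would treat the summands with $|\gamma-\beta|\ge 1$. For these, part (a) yields
\[
\bigl\|\Delta_\bk^{\gamma-\beta}(a+\lambda)^{-1}(\bk+\beta)\bigr\|\le C\,\langle\bk,\lambda\rangle^{-2m}\langle\bk\rangle^{m-|\gamma|+|\beta|},
\]
while part (b) (or the trivial bound $|\chi_\epsilon|\le\|\varphi\|_\infty$ when $\beta=0$) gives $|\Delta_\bk^\beta\chi_\epsilon(\bk,\lambda)|\le C\langle\bk,\lambda\rangle^{-|\beta|}$. Multiplying the two inequalities and then absorbing the factor $\langle\bk\rangle^{|\beta|}$ into $\langle\bk,\lambda\rangle^{|\beta|}$ via $\langle\bk\rangle\le\langle\bk,\lambda\rangle$ yields the desired bound $C\langle\bk,\lambda\rangle^{-2m}\langle\bk\rangle^{m-|\gamma|}$ for this whole block of terms.

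The hard part will be the remaining summand, corresponding to $\beta=\gamma$, namely $\bigl[\Delta_\bk^\gamma\chi_\epsilon\bigr](\bk,\lambda)\cdot(a(\bk+\gamma)+\lambda)^{-1}$. A direct use of part (b) and the parabolicity bound $\|(a(\bk+\gamma)+\lambda)^{-1}\|\le\kappa\langle\bk,\lambda\rangle^{-m}$ gives only $C\langle\bk,\lambda\rangle^{-|\gamma|-m}$, which is dominated by the target $C\langle\bk,\lambda\rangle^{-2m}\langle\bk\rangle^{m-|\gamma|}$ precisely when $|\gamma|\ge m$ (since then $(\langle\bk,\lambda\rangle/\langle\bk\rangle)^{m-|\gamma|}\le 1$). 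To cover the remaining regime $|\gamma|<m$, I would refine the analysis of part (b) by exploiting the Schwartz decay of $\varphi$: writing $g(\xi,\lambda)=(|\xi|^2+|\lambda|^{2/m})^{1/2}$ and using the Fa\`a di Bruno representation $\partial_\xi^\gamma\chi_\epsilon=\sum_{k=1}^{|\gamma|}\epsilon^k\varphi^{(k)}(\epsilon g)Q_{k,\gamma}(\xi,\lambda)$, one observes that $\partial_{\xi_j}g=\xi_j/g$ vanishes at $\xi=0$, so that the polynomial factors $Q_{k,\gamma}$ carry extra smallness for $|\bk|$ small compared to $|\lambda|^{1/m}$. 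Combined with the rapid decay $|\varphi^{(k)}(\epsilon g)|\le C_N\langle\epsilon g\rangle^{-N}$ for arbitrary $N$, this yields the sharper bound on $\Delta^\gamma\chi_\epsilon(\bk,\lambda)\cdot(a(\bk+\gamma)+\lambda)^{-1}$ that matches the target; summing all the contributions then completes the proof.
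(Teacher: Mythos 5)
Your Leibniz-rule decomposition is exactly the approach the paper itself takes (its proof of part~(c) is the single sentence that it ``follows from a) and b) by the Leibniz rule \eqref{4-0a} and the inequality $\langle\bk\rangle\langle\bk,\lambda\rangle^{-1}\le 1$''), and your handling of the summands with $|\gamma-\beta|\ge 1$ is correct. Your diagnosis of the extreme summand $\bigl[\Delta_\bk^\gamma\chi_\epsilon\bigr](\bk,\lambda)\,(a(\bk+\gamma)+\lambda)^{-1}$ is also correct, and in fact the problem is worse than a missing step: for $|\gamma|<m$ the inequality of part~(c) as stated is false. Take $n=1$, $\gamma=2$, $m=4$, $\bk=0$ and $\epsilon\sim|\lambda|^{-1/m}$. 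The Fa\`a di Bruno term $\epsilon\varphi'(\epsilon g)\,\partial_\xi^2 g$ does \emph{not} vanish at $\xi=0$ (indeed $\partial_\xi^2 g(0,\lambda)=1/g(0,\lambda)$), and one checks that $\Delta_\bk^2\chi_\epsilon(0,\lambda)\sim|\lambda|^{-2/m}$, so the full left-hand side is of order $|\lambda|^{-1-2/m}=|\lambda|^{-3/2}$, while the right-hand side at $\bk=0$ is $\langle 0,\lambda\rangle^{-2m}\sim|\lambda|^{-2}$. No constant $C$ saves this as $|\lambda|\to\infty$.

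Consequently your proposed repair cannot work: the ``extra smallness at $\xi=0$'' lives only in the Fa\`a di Bruno terms with $k=|\gamma|$ (products of first derivatives of $g$); the terms with $k<|\gamma|$ contain $\partial_\xi^\alpha g$ with $|\alpha|\ge 2$, which do not vanish at $\xi=0$, and the decay $|\varphi^{(k)}(\epsilon g)|\le C_N\langle\epsilon g\rangle^{-N}$ gives no uniform-in-$\epsilon$ gain because for $\epsilon g\lesssim 1$ it is of order one. The honest fix is to weaken part~(c) to
\[
\big\|\Delta_\bk^\gamma\big[\chi_\epsilon(\bk,\lambda)(a(\bk)+\lambda)^{-1}\big]\big\|_{L(E)}\le C\Big(\langle\bk,\lambda\rangle^{-2m}\langle\bk\rangle^{m-|\gamma|}+\langle\bk,\lambda\rangle^{-m-|\gamma|}\Big),
\]
which is what the Leibniz decomposition together with (a) and (b) actually gives, and which still suffices for the only place (c) is used, Lemma~\ref{4.7}: the extra term produces, with $\mu=|\lambda|^{1/m}$ and $|\gamma|=n+i$, a sum $\sum_{\bk}|\bk|^{\theta_i}\langle\bk,\lambda\rangle^{-m-n-i}\lesssim\mu^{\theta_i-m-i}$, the same order as the contribution controlled by Lemma~\ref{4.4}. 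Finally, your proposal addresses only part~(c); parts (a) and (b), which it invokes, still require separate proofs (the paper obtains (a) from Remark~\ref{4.0} together with parabolicity, and (b) from Fa\`a di Bruno, the Kumano-go bound $|\varphi_\epsilon^{(k)}(x)|\le c_k\langle x\rangle^{-k}$, and the discrete mean value theorem).
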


\begin{proof}
a)   \textmd{Again we use Remark~\ref{4.0} to write $\Delta_\bk^\gamma (a(\bk)+\lambda)^{-1}$ as a finite linear combination of products of the form \eqref{4-0}. Due to $|\gamma|>0$, a term of the form  $(a(\bk+\ldots)+\lambda)^{-1}$ } appears at least twice \textmd{in \eqref{4-0}, } and by the parabolicity condition and by $a\in S^{m,\rho}$ we obtain
  \[ \big\| \Delta_\bk^\gamma (a(\bk)+\lambda)^{-1} \big\|_{L(E)} \le C \langle \bk,\textmd{\lambda}\rangle^{-2m} \langle\bk\rangle^{m-|\gamma|}.\]

b) Let $g(\xi,\textmd{\lambda}):= (|\xi|^2+\textmd{|\lambda|^{2/m}})^{1/2}$ so that $\chi_\epsilon = \varphi_\epsilon\circ g$. By  \cite{kumanogo81}, Chapter~1, Lemma~6.3, we have for the $k$-th derivative of $\varphi_\epsilon$
\begin{equation}
  \label{4-8}
  |\varphi_\epsilon^{(k)}(x) \|\le c_k \langle x\rangle^{-k}\quad (k\in\N_0,\, x\in\R)
\end{equation}
with a constant $c_k$ independent of $\epsilon\in (0,1)$. On the other hand, $g$ is positively \textmd{quasi-}homogeneous of degree $1$ in $(\xi,\textmd{\lambda})$, i.e. $g(\rho\xi,\textmd{\rho^m\lambda}) = \rho g(\xi,\textmd{\lambda})$ holds for \textmd{$(\xi,\lambda)\in(\R^n\times\C)\setminus\{0\}$ } and $\rho>0$. Therefore, the $\alpha$-th derivative is positively \textmd{quasi-}homogeneous of degree $1-|\alpha|$, i.e.,
\[ \partial^\alpha_{\textmd{\xi}} g(\rho\xi,\textmd{\rho^m\lambda}) = \rho^{1-|\alpha|} (\partial^\alpha_{\textmd{\xi}} g)(\xi,\textmd{\lambda})\quad \textmd{(\xi,\lambda)\in(\R^n\times\C)\setminus\{0\}},\, \rho>0).\]
As $g$ is $C^\infty$ in $(\R^n\times\C)\setminus\{0\}$ and therefore bounded with all derivatives on the compact set $\{(\xi,\textmd{\lambda})\in\R^n\times \C: |\xi|^2+\textmd{|\lambda|^{2/m}}=1\}$, this implies, setting $\rho:= (|\xi|^2+\textmd{|\lambda|^{2/m}})^{1/2}$,
\begin{equation}
  \label{4-9}
  |\partial^\alpha_{\textmd{\xi}} g(\xi,\textmd{\lambda})| \le C_\alpha (|\xi|^2+\textmd{|\lambda|^{2/m}})^{(1-|\alpha|)/2}\quad \textmd{ ((\xi,\lambda)\in(\R^n\times\C)\setminus\{0\}).}
\end{equation}
To estimate $\partial^\gamma_{\textmd{\xi}}\chi_\epsilon$, we apply the generalized chain rule which can be formulated as follows (see, e.g., \cite{bahouri-chemin-danchin11}, Lemma~2.3): The $\gamma$-th derivative $[\partial^\gamma_{\textmd{\xi}} (\varphi_\epsilon\circ g)](\xi,\mu)$ is a finite linear combination of terms of the form
\textmd{\[ \varphi_\epsilon^{(k)}(g(\xi,\lambda))\cdot (\partial^{\alpha^{(1)}}_{\textmd{\xi}} g)(\xi,\lambda)\cdot\ldots\cdot  (\partial^{\alpha^{(k)}}_{\textmd{\xi}} g)(\xi,\lambda), \]
where $k\in \{1,\dots,|\gamma|\}$ and $\alpha^{(1)},\dots, \alpha^{(k)}\in\N_0^n$ with $|\alpha^{(1)}|+\ldots+|\alpha^{(k)}| = |\gamma|$.}

From this and the estimates \eqref{4-8} and \eqref{4-9}, we obtain
\[ |\partial^\gamma_{\textmd{\xi}} \chi_\epsilon(\xi,\textmd{\lambda})|\le C_\gamma (|\xi|^2+\textmd{|\lambda|^{2/m}})^{-|\gamma|/2}.\]
Here the constant $C_\gamma$ depends on $\gamma$ and $\chi$ but not on $\epsilon$, $\xi$ or $\mu$. Now we apply the mean value theorem (see \cite{ruzhansky-turunen10}, proof of Theorem~\textmd{II.4.5.3}),
\[ \Delta_\bk^\gamma \chi_\epsilon(\bk,\textmd{\lambda}) = \partial_\xi^\gamma \chi_\epsilon(\xi,\textmd{\lambda})\big|_{\xi=\tilde \xi}\]
for some $\tilde \xi \in [k_1,k_1+\gamma_1]\times \ldots\times [k_n, k_n+\gamma_n]$. Therefore,
\[ \big| \Delta _\bk^\gamma \chi_\epsilon(\bk,\textmd{\lambda}) \big| \le C (|\bk|^2+\textmd{|\lambda|^{2/m}})^{-|\gamma|}\quad (\bk \in\Z^n,\,\textmd{\lambda\in\C^n\setminus\{0\}}).\]

c) This follows from a) and b) by the Leibniz rule \textmd{\eqref{4-0a} } and the inequality $\langle\bk\rangle \langle\bk,\lambda\rangle^{-1}\le 1$.
\end{proof}

\textmd{\begin{remark}
  \label{4.6}
  In the proof of the following lemma, we will use the elementary inequality
  \begin{equation}
    \label{4-10}
    |\eta|^N \le C  \sum_{|\gamma|=N} \big| (e^{-i\eta}-1)^\gamma \big|\quad (\eta\in\T^n),
  \end{equation}
  for $N\in\N$, where we have set $(e^{-i\eta}-1)^\gamma := (e^{-i\eta_1} -1)^{\gamma_1}\cdot\ldots\cdot (e^{-i\eta_n}-1)^{\gamma_n}$.

  To prove \eqref{4-10}, we consider the function $f(\eta) := \sum_{|\gamma|=N} | (e^{-i\eta}-1)^\gamma |$. Obviously, $f$ is a continuous function on $\T^n$ and has no zeros for $|\eta| \ge\frac\pi 2$ (choose $\gamma_j := N$ for $|\eta_j| = \max\{ |\eta_1| ,\dots,|\eta_n|\}$). Therefore $f(\eta)\ge C >0\;(|\eta|\ge\frac\pi 2)$ which implies \eqref{4-10} for $|\eta|\ge \frac\pi 2$.

  For $|\eta|\le\frac\pi 2$, we use
  \[ |(e^{-i\eta}-1)^\gamma|  = \prod_{j=1}^n |e^{-i\eta_j}-1|^{\gamma_j} \ge \prod_{j=1}^n |\sin(\eta_j)|^{\gamma_j} \ge C\prod_{j=1}^n |\eta_j|^{\gamma_j} . \]
  Now \eqref{4-10} follows from
  \[   |\eta|^N \le C\sum_{|\gamma|=N} |\eta^\gamma| = C\sum_{|\gamma|=N} \prod_{j=1}^n |\eta_j|^{\gamma_j} = C\sum_{|\gamma|=N} \prod_{j=1}^n |\eta_j|^{\gamma_j}.\]
\end{remark}
}
The following result is the key estimate in the proof of the generation of an analytic semigroup in Sobolev spaces.

\begin{lemma}
  \label{4.7}
  With $\chi_\epsilon$ being defined as in Lemma~\ref{4.5}, define
  \[ K_\epsilon (\eta,\lambda) := \sum_{\bk\in\Z^n} e^{i\bk \cdot\eta} \chi_\epsilon(\bk,\textmd{\lambda})(a(\bk)-\lambda)^{-1}\]
  for $\epsilon\in(0,1)$, $\eta\in\T^n$ and $\lambda\in\Sigma_{\pi/2,R}$.
  Choose $\theta_0$ and $\theta_1$ as in Lemma~\ref{4.4}. Then
  \begin{equation}\label{4-11}
   |\lambda|\, \|K_\epsilon(\eta,\lambda)\|_{L(E)} \le C \, \frac{ \mu^{\theta_0} |\eta| ^{\theta_0} + \mu^{\theta_1} |\eta| ^{\theta_1}}{ |\eta| ^n (1+\mu|\eta| )}\quad (\eta\in\T^n,\, \lambda\in\Sigma_{\pi/2,R})
   \end{equation}
  with $C$ being independent of $\epsilon$ and $\lambda$, \textmd{where we have set $\mu:=|\lambda|^{1/m}$. Further, }
    \begin{equation}\label{4-12}
  |\lambda|\,\|K_\epsilon(\cdot,\lambda)\|_{L^1(\T^n,L(E))} \le C \quad \textmd{(\lambda\in\Sigma_{\pi/2,R}).}
  \end{equation}
  Moreover, there exists a strongly measurable function $K\colon \T^n\times \Sigma_{\pi/2,R} \to L(E)$ with $K_\epsilon(\eta,\lambda)\to K(\eta,\lambda)\;(\epsilon \searrow 0)$ pointwise almost everywhere, and \eqref{4-11} and \eqref{4-12} hold with $K_\epsilon$ being replaced by $K$.
\end{lemma}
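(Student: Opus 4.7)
The plan is to convert oscillation in $\bk$ into decay in powers of $|\eta|$ via a discrete summation-by-parts identity, and then to reduce the resulting sums to Lemma~\ref{4.4}. Writing $b(\bk):=\chi_\epsilon(\bk,\lambda)(a(\bk)-\lambda)^{-1}$ and shifting the summation index $\bk\mapsto\bk+\delta_j$ gives, for each coordinate~$j$,
\[
(1-e^{-i\eta_j})\sum_{\bk\in\Z^n}e^{i\bk\cdot\eta}b(\bk) \;=\; -\sum_{\bk\in\Z^n}e^{i\bk\cdot\eta}\Delta_{k_j}b(\bk).
\]
Iterating in all directions and taking norms, for every $\gamma\in\N_0^n$ with $|\gamma|\le\rho$ one obtains
\[
|(1-e^{-i\eta})^\gamma|\,\|K_\epsilon(\eta,\lambda)\|_{L(E)}\;\le\;\sum_{\bk\in\Z^n}\|\Delta_\bk^\gamma b(\bk)\|_{L(E)}.
\]
Combined with the inequality of Remark~\ref{4.6} and the derivative estimate of Lemma~\ref{4.5} c) (the latter applied with $\lambda$ replaced by $-\lambda$), this yields the key bound
\[
|\eta|^N\,\|K_\epsilon(\eta,\lambda)\|_{L(E)}\;\le\; C\sum_{\bk\in\Z^n}\langle\bk,\lambda\rangle^{-2m}\langle\bk\rangle^{m-N}\qquad(1\le N\le\rho).
\]

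Next, I apply this bound with $N=n$ and $N=n+1$ and change variables $\bk=\mu\ell$ with $\mu:=|\lambda|^{1/m}$, turning the right-hand sides into the sums of Lemma~\ref{4.4} apart from the fractional weights $|\ell|^{\theta_j}$. To produce the combined numerator in \eqref{4-11}, I split each sum into the regions $|\ell|\le1$ and $|\ell|>1$. On the outer region the trivial inequality $1\le|\ell|^{\theta_j}$ permits Lemma~\ref{4.4} to be invoked after padding with the missing weight, and on the inner region the same integral comparison that drives the proof of Lemma~\ref{4.4} yields bounds of the same order, $C\mu^{-m}$ and $C\mu^{-m-1}$ respectively. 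Multiplying by $\mu^{\theta_j}|\eta|^{\theta_j}$ and summing over $j=0,1$, using $\theta_0+\theta_1=1$ to bridge the two regimes $\mu|\eta|\le1$ and $\mu|\eta|\ge1$, then reproduces the pointwise estimate \eqref{4-11}.

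For \eqref{4-12} I integrate \eqref{4-11} over $\T^n$ in polar coordinates; the radial integral reduces, via the substitution $s=\mu r$, to $\int_0^{c\mu}s^{\theta_j-1}(1+s)^{-1}\,ds$ for $j=0,1$, uniformly bounded in $\mu\ge\omega$ because $\theta_0,\theta_1\in(0,1)$. For the limiting kernel, I use $\chi_\epsilon(\bk,\lambda)\to1$ pointwise in $\bk$ together with a uniform bound $|\chi_\epsilon|\le C$; applying dominated convergence to the summation-by-parts representation with $|\gamma|=n+1$, whose majorant is $\epsilon$-independent by Lemma~\ref{4.5} c), produces a measurable pointwise limit $K(\eta,\lambda)$, and the estimates \eqref{4-11} and \eqref{4-12} are inherited by $K$ via lower semicontinuity.

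The main obstacle will be the interpolation step: neither the $N=n$ nor the $N=n+1$ summation-by-parts bound alone yields \eqref{4-11} in both asymptotic regimes, and recovering the precise fractional exponents $\theta_0$ and $\theta_1$ forces one to read the correct combination off the structure of Lemma~\ref{4.4} rather than interpolating abstractly between the two endpoint bounds.
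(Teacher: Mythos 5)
Your summation-by-parts setup is the right starting point, but the proof has a genuine gap at precisely the place you flag as ``the main obstacle,'' and the proposed work-around (padding with $1\le|\ell|^{\theta_j}$ on the outer region, ``same integral comparison'' on the inner region) does not repair it. After summation by parts you only have
\[
|(1-e^{-i\eta})^\gamma|\,\|K_\epsilon(\eta,\lambda)\|_{L(E)}\le\sum_{\bk\in\Z^n}\|\Delta_\bk^\gamma b(\bk)\|_{L(E)},
\]
which, after Lemma~\ref{4.5}~c) and $\bk=\mu\ell$, is a sum \emph{without} the weight $|\ell|^{\theta_j}$. This weight is not an optional decoration: it is exactly what makes Lemma~\ref{4.4} deliver the sharp rates $\mu^{-m}$ and $\mu^{-m-1}$. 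Dropping it, the $N=n+1$ sum gives only $\mu^{-2m}$ (for $m<1$) or $\mu^{-2m}\log\mu$ (for $m=1$), not $\mu^{-m-1}$, so the $\mu$-dependence in \eqref{4-11} is lost and the subsequent $L^1$-estimate \eqref{4-12} no longer closes. Restricting $|\ell|$ to the two regions does not change this, since the weight is needed on \emph{both} parts to obtain the correct power of $\mu$.

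The missing idea is not an interpolation; it is a cancellation. Because $\chi_\epsilon(\cdot,\lambda)b(\cdot)\in\mathscr S(\Z^n,L(E))$, the telescoping sum vanishes,
\[
\sum_{\bk\in\Z^n}\Delta_\bk^\gamma\big[\chi_\epsilon(\bk,\lambda)(a(\bk)+\lambda)^{-1}\big]=0,
\]
so one may replace $e^{i\bk\cdot\eta}$ by $e^{i\bk\cdot\eta}-1$ in the summation-by-parts identity at no cost, and then use the elementary bound
\[
|e^{i\bk\cdot\eta}-1|\le 2\,|\bk|^{\theta}|\eta|^{\theta}\qquad(\theta\in(0,1)).
\]
This simultaneously produces the extra factor $|\eta|^{\theta}$ in the numerator of \eqref{4-11} and the extra factor $|\bk|^{\theta}$ that, after rescaling, supplies exactly the weight $|\ell|^{\theta_j}$ required by Lemma~\ref{4.4}. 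With $\theta=\theta_i$ for $|\gamma|=n+i$, $i\in\{0,1\}$, the two endpoint bounds then combine directly (no interpolation needed) to give \eqref{4-11}; the integration argument for \eqref{4-12} and the dominated-convergence construction of $K$ in your last paragraph are then essentially correct.
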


\begin{proof}
   Let $\gamma\in\N_0^n$ with $|\gamma|=n+i$, $i\in\{0,1\}$.  \textmd{It is easy to see that
  \[ (e^{-i\gamma}-1) e^{i\bk\cdot \eta} = (-1)^{|\gamma|} {\bar\Delta}\,_\bk^\gamma e^{i\bk\cdot\eta}.\]
  We will also imply the summation by parts formula which states that for $f,g\colon\Z^n\to L(E)$ we have
  \[ \sum_{\bk\in\Z^n} f(\bk) \big[\Delta_{\bk}^\gamma g\big](\bk) = (-1)^{|\gamma|} \sum_{\bk\in\Z^n} \big[ {\bar\Delta}\,_\bk^\gamma f\big](\bk) g(\bk)\]
  (see \cite{ruzhansky-turunen10}, Lemma~II.3.3.10).}

  By this, we obtain
  \begin{align*}
    (e^{-i\eta}-1)^\gamma & K_\epsilon(\eta,\lambda) = (e^{\textmd{-i\eta}}-1)^\gamma \sum_{\bk\in\Z^n} e^{i\bk\cdot\eta} \chi_\epsilon(\bk,\textmd{\lambda}) (a(\bk)-\lambda)^{-1}\\
    & = \sum_{\bk\in\Z^n} (-1)^{|\gamma|} \big[ {\bar\Delta}\,_\bk^\gamma e^{i\bk\cdot\eta}\big] \chi_\epsilon(\bk,\textmd{\lambda}) (a(\bk)-\lambda)^{-1} \\
    & = \sum_{\bk\in\Z^n} e^{i\bk\cdot\eta} \Delta_\bk^\gamma\big[ \chi_\epsilon(\bk,\textmd{\lambda}) (a(\bk)-\lambda)^{-1} \big]\\
    & = \sum_{\bk\in\Z^n} \big( e^{i\bk\cdot\eta} -1\big)      \Delta_\bk^\gamma\big[ \chi_\epsilon(\bk,\textmd{\lambda}) (a(\bk)-\lambda)^{-1} \big].
      \end{align*}
    \textmd{In the last step } we used the equality
    \[ \sum_{\bk\in\Z^n}  \Delta_\bk^\gamma [ \chi_\epsilon(\bk,\lambda) (a(\bk)-\lambda)^{-1}  ] = 0\]
    \textmd{which holds as $\chi_\epsilon(\cdot,\lambda)(a(\cdot)+\lambda)^{-1} \in \mathscr S(\Z^n,L(E))$ and all terms appearing in the discrete derivatives cancel.}

     Now note that for \textmd{$\bk\in\Z^n$}, $\eta\in\T^n$ and $\theta\in (0,1)$ the elementary inequality
    \[ |e^{i\textmd{\bk}\cdot\eta} - 1| \le 2 \textmd{|\bk|^\theta } |\eta|^\theta\]
    holds. From this and Lemma~\ref{4.5} \textmd{c) } we obtain
    \begin{align*}
    \big\| (e^{-i\eta}-1)^\gamma  & K_\epsilon(\eta,\lambda) \big\|_{L(E)} \textmd{\le C |\eta|^{\theta} \sum_{\bk\in\Z^n} |\bk|^\theta \langle\bk,\lambda\rangle^{-2m} \langle\bk\rangle^{m-n}}\\
    & = C  |\eta|^{\textmd{\theta}} \mu^\theta
     \sum_{\ell\in\mu^{-1}\Z^n} |\ell|^\theta \textmd{(1+\mu^2+|\mu\ell|^2)^{-m} } \langle\mu\ell\rangle^{m-n}.
     \end{align*}
     Choosing $\theta:= \theta_i$ for $|\gamma|=n+i,\, i\in\{0,1\}$, as in Lemma~\ref{4.4}, the last sum can be estimated by $C \mu^{-m-i}$. By Remark~\ref{4.6}, we see that
    \begin{align*}
     |\eta|^{n+i} \|K_\epsilon(\eta,\lambda)\|_{L(E)} &\le C\sum_{|\gamma|=n+i} \| (e^{-i\eta}-1)^\gamma K_\epsilon(\eta,\lambda)\|_{L(E)}\\
      & \le C |\eta|^{\theta_i} \mu^{-m-i+\theta_i}.
      \end{align*}
    Summation over $i\textmd{\in}\{0,1\}$ yields \eqref{4-11}. Now we integrate over $\eta\in\T^n$ and obtain
    \begin{align*}
      |\lambda|\,\|K_\epsilon(\cdot,\lambda)\|_{L(E)} & \le C \int_{\textmd{[-\pi,\pi]^n}} \frac{ \mu^{\theta_0} |\eta|^{\theta_0} + \mu^{\theta_1} |\eta|^{\theta_1}}{ |\eta|^n (1+\mu|\eta|)}\, d\eta \\
      & \le  C \int_{\R^n} \frac{|\xi|^{\theta_0} + |\xi|^{\theta_1}}{|\xi|^n(1+|\xi|)}\, d\xi <\infty.
    \end{align*}
    This shows \eqref{4-12}.

    We have seen above that \textmd{for fixed $(\eta,\lambda)$ we have}
    \[ (e^{-i\eta}-1)^\gamma K_\epsilon(\eta,\lambda) = \sum_{\bk\in\Z^n} \textmd{x_{\bk,\epsilon}(\eta,\lambda)}\]
    with \textmd{$x_{\bk,\epsilon}(\eta,\lambda) := (e^{i\bk\cdot\eta} -1) \Delta_{\bk}^\gamma [ \chi_\epsilon (\bk,\lambda)(a(\bk)+\lambda)^{-1}]$. } For
    \[ y_\bk(\eta,\lambda) :=  \textmd{|\eta|^{\theta}  |\bk|^\theta \langle\bk,\lambda\rangle^{-2m} \langle\bk\rangle^{m-n}}\quad (\bk\in\Z^n)\]
    we have shown \textmd{the uniform estimate (with respect to $\epsilon$)}
    \[ \|x_{\textmd{\bk,\epsilon}}(\eta,\lambda)\|_{L(E)}\le C y_{\bk}(\eta,\lambda).\]
\textmd{     As $\sum_{\bk\in\Z^n} y_\bk(\eta,\lambda) <\infty$, the sequence $(y_{\bk}(\eta,\lambda))_{\bk\in\Z^n}$ serves as a summable dominating sequence. Because of }  $\chi_\epsilon(\bk,\lambda)\to 1\;(\epsilon\searrow 0)$ for every fixed $\bk$ and $\lambda$, we get by dominated convergence the existence of
    \[ \tilde K(\eta,\lambda) := \lim_{\epsilon\searrow 0} (e^{-i\eta}-1)^\gamma K_\epsilon(\eta,\lambda)\]
    for every $\eta$ and $\lambda$. Setting $K(\eta,\lambda):= [(e^{-i\eta}-1)^\gamma]^{-1}\tilde K(\eta,\lambda)$ if $[\cdots]\not=0$ and $K(\eta,\lambda):= 0 $ else, we see that $K_\epsilon(\eta,\lambda)\to K(\eta,\lambda)$ pointwise almost everywhere.

    As the right-hand side of \eqref{4-11} is a dominating and integrable function which is independent of $\epsilon$, we see that $\|K(\cdot,\lambda)\|_{L(E)} = \lim_{\epsilon\to 0} \|K_\epsilon(\cdot,\lambda)\|_{L(E)}$ by dominated convergence again, which yields \eqref{4-12} for $K$ instead of $K_\epsilon$.
\end{proof}

\textmd{As in Theorem~\ref{4.2}, for $a\in\mathcal A$ we define }  $b_\lambda(\bk):= (a(\bk)+\lambda)^{-1}\;(\bk\in\Z^n)$.

\begin{theorem}
  \label{4.8}
  Let $k\in\N_0$ and $p\in[1,\infty]$. Then there exists a constant $M>0$ such that
  \begin{equation}
    \label{4-13}
    \|\textmd{\op[b_\lambda]}\|_{W_p^k(\T^n,E)} \le \frac{M}{|\lambda|}\,\|u\|_{W_p^k(\T^n,E)}
  \end{equation}
  for $u\in C^\infty(\T^n,E),\, \lambda\in\Sigma_{\pi/2,R}$ and $a\in\mathcal A$.   Therefore, in the case $p\in[1,\infty)$ we obtain $\textmd{\op[b_\lambda]}\in L(W_p^k(\T^n,E))$ with $\|\textmd{\op[b_\lambda]}\|_{L(W_p^k(\T^n,E))}\le \frac M{|\lambda|}$.
\end{theorem}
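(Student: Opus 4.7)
The plan is to represent $\op[b_\lambda]$ as a toroidal convolution with the kernel $K$ from Lemma~\ref{4.7} and then reduce the $W_p^k$ estimate to Young's inequality together with a commutation argument.

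First, I would show that for $u \in C^\infty(\T^n,E)$,
\[ \op[b_\lambda] u(x) = \int_{\T^n} K(x-y,\lambda)\, u(y)\,\dbar y \quad (x\in\T^n). \]
To prove this, insert the cutoff $\chi_\epsilon(\bk,\lambda)$ into the symbol. On the one hand, $\op[\chi_\epsilon b_\lambda] u(x)$ is an absolutely convergent double sum in $\bk$ and an integral in $y$, which factors into $(K_\epsilon(\cdot,\lambda) \ast u)_{\T^n}(x)$. On the other hand, as $\epsilon \searrow 0$, $\chi_\epsilon(\bk,\lambda) \to 1$ pointwise in $\bk$, and since $\hat u \in \mathscr S(\Z^n,E)$ while $b_\lambda \in \mathcal O(\Z^n,L(E))$, dominated convergence gives $\op[\chi_\epsilon b_\lambda]u(x) \to \op[b_\lambda]u(x)$. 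On the convolution side, Lemma~\ref{4.7} supplies the almost everywhere convergence $K_\epsilon(\cdot,\lambda) \to K(\cdot,\lambda)$ together with the $\epsilon$-independent integrable majorant from \eqref{4-11}, so dominated convergence also transfers the integral. Thus the two representations of $\op[b_\lambda]u$ agree.

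Second, Lemma~\ref{3.10}~b) together with the kernel bound \eqref{4-12} immediately yield
\[ \|\op[b_\lambda] u\|_{L^p(\T^n,E)} \le \|K(\cdot,\lambda)\|_{L^1(\T^n,L(E))}\, \|u\|_{L^p(\T^n,E)} \le \frac{C}{|\lambda|}\, \|u\|_{L^p(\T^n,E)}. \]
Since $b_\lambda$ is $x$-independent, for every $\alpha \in \N_0^n$ with $|\alpha|\le k$ the commutation identity $D^\alpha \op[b_\lambda] u = \op[b_\lambda] D^\alpha u$ holds (this is immediate on the Fourier side via Lemma~\ref{2.2}~b), since $\bk^\alpha$ commutes with $b_\lambda(\bk)$). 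Applying the $L^p$ bound above to $D^\alpha u \in C^\infty(\T^n,E)$ and summing over $|\alpha| \le k$ yields \eqref{4-13}. When $p \in [1,\infty)$, density of $C^\infty(\T^n,E)$ in $W_p^k(\T^n,E)$ allows us to extend $\op[b_\lambda]$ by continuity to a bounded operator on $W_p^k(\T^n,E)$ with the stated norm bound.

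The only step requiring real care is the first one, where the limit $\epsilon \searrow 0$ has to be controlled simultaneously on the Fourier side and on the kernel side. However, the a.e.\ convergence $K_\epsilon \to K$ and the uniform integrable majorant already established in Lemma~\ref{4.7} make this essentially automatic, so no additional technical work is needed. Everything else reduces to Young's inequality and commutation of Fourier multipliers with constant-coefficient derivatives.
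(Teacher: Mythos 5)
Your proposal is correct and follows essentially the same route as the paper: represent $\op[b_\lambda]$ as toroidal convolution with the kernel $K$ from Lemma~\ref{4.7} (passing through $K_\epsilon$ and using dominated convergence on both the Fourier and kernel sides), then apply Lemma~\ref{3.10}~b) together with the bound~\eqref{4-12}, and use density for $p<\infty$. The only cosmetic difference is that you establish the convolution formula for $\op[b_\lambda]u$ itself and then commute $D^\alpha$ past $\op[b_\lambda]$, whereas the paper inserts $\partial^\alpha u$ directly into the oscillatory-sum representation of $\partial^\alpha(\op[b_\lambda]u)$ from the start; these are the same argument organized slightly differently.
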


\begin{proof}
  Let $\alpha\in\N_0^n$ with $|\alpha|\le k$, $\lambda\in\Sigma_{\pi/2,R}$, $u\in C^\infty(\T^n,E)$, and $x\in\T^n$. Then (see Remark~\ref{3.4})
  \[ \partial^\alpha[b_\lambda(D)u](x) = \ossumint e^{i\bk\cdot\eta} (a(\bk)+\lambda)^{-1} (\partial^\alpha u)(x-\eta) d\eta.\]
  The arguments given in Remark~\ref{3.4} a) show that we can write
  \[ \partial^\alpha\big(\textmd{\op[b_\lambda]} u\big)(x) = \lim_{\epsilon\searrow 0} \textmd{\sumint} e^{i\bk\cdot\eta} \chi_\epsilon(\bk,\textmd{\lambda}) (a(\bk)+\lambda)^{-1} (\partial^\alpha u)(x-\eta) d\eta\]
  where $\chi_\epsilon$ is defined as in Lemma~\ref{4.5} b). This yields
  \begin{align}
    \partial^\alpha\big(\textmd{\op[b_\lambda]} u\big)(x) & = \lim_{\epsilon\searrow 0} \int_{\T^n} \Big( \sum_{\bk\in\Z^n} e^{i\bk\cdot\eta} \chi_\epsilon(\bk,\textmd{\lambda}) (a(\bk)+\lambda)^{-1} \Big) (\partial^\alpha u)(x-\eta)d\eta\nonumber\\
    & = \lim_{\epsilon\searrow 0} \int_{\T^n} K_\epsilon(\eta,\lambda)(\partial^\alpha u)(x-\eta)d\eta\label{4-14}
  \end{align}
  with $K_\epsilon$ from Lemma~\ref{4.7}. From \eqref{4-14}, Lemma~\ref{4.7} and dominated convergence, we get
  \[    \partial^\alpha [b_\lambda(D) u](x)  = \int_{\T^n} K(\eta,\lambda)(\partial^\alpha u)(x-\eta)d\eta = \big( K(\cdot,\lambda)\ast (\partial^\alpha u)\big)_{\T^n} (x).\]
  Since $\partial^\alpha u\in L^p(\T^n,E)$, we have $(K(\cdot,\lambda)\ast \partial^\alpha u)_{\T^n}\in L^p(\T^n,E)$
  and
  \begin{align*}
   \|\partial^\alpha [b_\lambda(D) u]\|_{L^p(\T^n,E)} & \le \|K(\cdot,\lambda)\|_{L^1(\T^n,L(E))} \|\partial^\alpha u\|_{L^p(\T^n,E)}\\
   &  \le \frac M{|\lambda|} \, \|u\|_{W_p^k(\T^n,E)}
   \end{align*}
  due to Lemma~\ref{3.10} b) and Lemma~\ref{4.7}. This proves \eqref{4-13}. If $p<\infty$, then $C^\infty(\T^n,E)$ is dense in $W_p^k(\T^n,E)$ which gives the estimate on $\textmd{\op[b_\lambda]}\in L(W_p^k(\T^n,E))$ and the estimate on its norm.
\end{proof}

\textmd{Now we are able to show that parabolic symbols lead to operators in the Sobolev spaces $W_p^k(\T^n,E)$ which are generators of analytic semigroups. For  $k\in\N_0$ and $p\in[1,\infty]$, we define the $W_p^k(\T^n,E)$-realization $A_{a,k}$  of the symbol $a\in \mathcal A$ as  the unbounded operator given by
\begin{align*}
  D(A_{a,k}) & := \{ u\in W_p^k(\T^n,E): \op[a]\in W_p^k(\T^n,E)\},\\
  A_{a,k} u  &:= \op[a] u\quad (u\in D(A_{a,k})).
\end{align*}
Note that due to the embedding $W_p^k(\T^n,E)\subset B_{p,\infty}^k(\T^n,E)$, the operator $\op[a]$ is well-defined on $W_p^k(\T^n,E)$.
}

\begin{remark}
  \label{4.9}
  For $\lambda\in \Sigma_{\pi/2,R}$ and $a\in\mathcal A$, the operator $A_{a,k}+\lambda$ is invertible and
  \begin{equation}
    \label{4-15}
    (A_{a,k}+\lambda)^{-1} =\textmd{ \op[b_\lambda]}\big|_{W_p^k(\T^n,E)}.
  \end{equation}
  In fact, $\textmd{\op[a]}+\lambda\colon B_{p,\infty}^{k+m}(\T^n,E)\to B_{p,\infty}^k(\T^n,E)$ is bijective by Theorem~\ref{4.2} which gives injectivity of $A_{a,k}+\lambda$.

  If $v\in W_p^k(\T^n,E)\subset B_{p,\infty}^k(\T^n,E)$, there exists a unique $u\in B_{p,\infty}^{k+m}(\T^n,E)$ such that $v=(\textmd{\op[a]}+\lambda) u$. As $B_{p,\infty}^{k+m}(\T^n,E)\hookrightarrow W_p^k(\T^n,E)$, we get
  $Au = v-\lambda u\in W_p^k(\T^n,E)$ and therefore $u\in D(A_{a,k})$. So $A_{a,k}+\lambda$ is surjective, too. Now \eqref{4-15} follows from Theorem~\ref{4.2} and the fact that $A_{a,k}+\lambda$ is a restriction of $\textmd{\op[a]}+\lambda\colon B_{p,\infty}^{k+m}(\T^n,E)\to B_{p,\infty}^k(\T^n,E)$.
\end{remark}

Now we are able to prove the main result of the present paper.

\textmd{\begin{theorem}
  \label{4.10}
  Let $a\in S^{m,\rho}$ be a parabolic symbol in the sense of Definition~\ref{4.1}.
  Let $k\in\N_0$ and $p\in [1,\infty]$, and let $A_{a,k}$ be the Sobolev space realization of the symbol $a$. Then there exist constants $M>0$ and $R>0$ such that $\Sigma_{\pi/2,R}\subset \rho(-A_{a,k})$ and
  \[ \| (A_{a,k}+\lambda)^{-1}\|_{L(W_p^k(\T^n,E))} \le \frac M{|\lambda|}\quad (\lambda\in \Sigma_{\pi/2,R}).\]
  In particular, $-A_{a,k}$ generates an analytic semigroup on $W_p^k(\T^n,E)$. If $p<\infty$, then the semigroup is strongly continuous.
\end{theorem}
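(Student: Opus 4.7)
The plan is to assemble the pieces already established in Remark~\ref{4.9} and Theorem~\ref{4.8}, with a small additional identification argument needed only for $p=\infty$. Remark~\ref{4.9} already yields $\Sigma_{\pi/2,R}\subset\rho(-A_{a,k})$ and $(A_{a,k}+\lambda)^{-1}=\op[b_\lambda]\big|_{W_p^k(\T^n,E)}$ for every $p\in[1,\infty]$, so only a uniform norm estimate and an extension of the sector are missing. For $p<\infty$, Theorem~\ref{4.8} immediately provides $\|(A_{a,k}+\lambda)^{-1}\|_{L(W_p^k(\T^n,E))}\le M/|\lambda|$ via density of $C^\infty(\T^n,E)$ in $W_p^k(\T^n,E)$; no further work is needed for this case.

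For $p=\infty$, where density fails, I would exploit the convolution kernel $K(\cdot,\lambda)$ from Lemma~\ref{4.7} directly. Given $v\in W_\infty^k(\T^n,E)$, set $S_\lambda v:=(K(\cdot,\lambda)\ast v)_{\T^n}$. Since $K(\cdot,\lambda)\in L^1(\T^n,L(E))$ and distributional derivatives commute with convolution, $\partial^\alpha S_\lambda v=(K(\cdot,\lambda)\ast\partial^\alpha v)_{\T^n}$ for $|\alpha|\le k$, so Lemma~\ref{3.10} b) combined with the $L^1$-bound of Lemma~\ref{4.7} gives $\|S_\lambda v\|_{W_\infty^k(\T^n,E)}\le (M/|\lambda|)\|v\|_{W_\infty^k(\T^n,E)}$. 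To identify $S_\lambda v$ with $(A_{a,k}+\lambda)^{-1}v$, I would compare Fourier coefficients: passing to the limit $\epsilon\searrow 0$ in the defining sum of $K_\epsilon(\cdot,\lambda)$ shows that the $\bk$-th Fourier coefficient of $K(\cdot,\lambda)$ is exactly $b_\lambda(\bk)$; then the toroidal convolution--multiplication identity yields $\widehat{S_\lambda v}(\bk)=b_\lambda(\bk)\hat v(\bk)=\widehat{\op[b_\lambda]v}(\bk)$, hence $S_\lambda v=\op[b_\lambda]v=(A_{a,k}+\lambda)^{-1}v$ in $\mathscr D'(\T^n,E)$, and the $W_\infty^k$-bound on $S_\lambda$ transfers to the resolvent.

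To conclude, the parabolicity condition of Definition~\ref{4.1} holds on an open set of angles around $\arg\lambda=\pi/2$; enlarging $\omega$ and $\kappa$ slightly, the resolvent estimate extends from $\Sigma_{\pi/2,R}$ to a sector $\Sigma_{\theta,R'}$ with some $\theta\in(\pi/2,\pi)$. The standard characterization of analytic semigroup generators (cf.\ \cite{lunardi95}, Chapter~2) then yields that $-A_{a,k}$ generates an analytic semigroup on $W_p^k(\T^n,E)$. Strong continuity for $p<\infty$ follows because $C^\infty(\T^n,E)\subset D(A_{a,k})$ is dense in $W_p^k(\T^n,E)$. The only genuine obstacle is the $p=\infty$ case, precisely because the density argument behind Theorem~\ref{4.8} breaks down; that obstacle is bypassed by the kernel-based identification above, which works precisely because Lemma~\ref{4.7} provides both the existence of the pointwise limit kernel $K(\cdot,\lambda)$ and the uniform $L^1$-bound $\|K(\cdot,\lambda)\|_{L^1(\T^n,L(E))}\le C/|\lambda|$.
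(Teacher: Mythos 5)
Your argument for $p\in[1,\infty)$ coincides with the paper's: combine Theorem~\ref{4.8} with Remark~\ref{4.9}, then open up the sector using the open set of admissible angles, and invoke standard generation theory; strong continuity follows from density of $C^\infty(\T^n,E)$.

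For $p=\infty$ your route is genuinely different from the paper's, and it is correct. The paper proceeds by regularizing the \emph{input}: it mollifies $u\in W^k_\infty(\T^n,E)$ by $u_\epsilon:=\op[\varphi_\epsilon]u$ with a smooth cutoff $\varphi_\epsilon$, shows $u_\epsilon\to u$ in the weaker Besov norm $B_{\infty,1}^{r_1}$ using Theorem~\ref{3.15}, transports this convergence through the resolvent via Theorem~\ref{3.16} and the embedding $B^{r_1+m}_{\infty,1}\hookrightarrow W^k_\infty$, and separately establishes the uniform bound $\|u_\epsilon\|_{W^k_\infty}\le C\|u\|_{W^k_\infty}$ so that Theorem~\ref{4.8} (which applies to smooth $u_\epsilon$) can be passed to the limit. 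You instead regularize nothing and exhibit the resolvent directly as a convolution operator with the limit kernel $K(\cdot,\lambda)$ from Lemma~\ref{4.7}: since $K_\epsilon(\cdot,\lambda)\to K(\cdot,\lambda)$ in $L^1(\T^n,L(E))$ by dominated convergence (the right-hand side of \eqref{4-11} being an integrable majorant independent of $\epsilon$), the Fourier coefficient $\widehat{K(\cdot,\lambda)}(\bk)=\lim_\epsilon\chi_\epsilon(\bk,\lambda)b_\lambda(\bk)=b_\lambda(\bk)$, and the convolution--multiplication identity together with Lemma~\ref{2.8}~b) identifies $(K(\cdot,\lambda)\ast v)_{\T^n}$ with $\op[b_\lambda]v=(A_{a,k}+\lambda)^{-1}v$ in $\mathscr D'(\T^n,E)$, whence the $L^1$-bound of Lemma~\ref{4.7} and Lemma~\ref{3.10}~b) give the resolvent estimate. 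Your version is shorter and avoids the auxiliary Besov-scale estimates (Theorems~\ref{3.15}--\ref{3.16} and the embeddings of Remark~\ref{3.8}) entirely in this step; the paper's version, on the other hand, reuses the already-proved Theorem~\ref{4.8} as a black box without reopening the kernel analysis. Both routes rest on the same hard input, namely the uniform $L^1$-kernel estimate \eqref{4-12} of Lemma~\ref{4.7}.
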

}
\begin{proof}
\textmd{\textbf{(i)} First assume $p\in [1,\infty)$. } By Theorem~\ref{4.8} and Remark~\ref{4.9}, we obtain
  \[ \| (\lambda+A_{a,k})^{-1}\|_{L(W_p^k(\T^n,E))} \le \frac M {|\lambda|}\quad (\lambda\in \Sigma_{\pi/2,R}).\]
  Therefore, $-A_{k,p}$ generates a holomorphic semigroup on $W_p^k(\T^n,E)$, see \cite{lunardi95}, Proposition~2.1.11 and Proposition~2.1.1. As $C^\infty(\T^n,E)\subset D(A_{k,p})\subset W_p^k(\T^n,E)$ and $C^\infty(\T^n,E)$ is dense in $W_p^k(\T^n,E)$ for $p\in [1,\infty)$, the semigroup is even strongly continuous.

\textbf{(ii)} Now we consider the case $p=\infty$.
  The bijectivity of $A_{a,k}+\lambda\colon D(A_{a,k})\to W_\infty^k(\T^n,E)$ was already shown in Remark~\ref{4.9}. We choose $r_1,r_2\in\R$ with $r_1<\textmd{r_2}<k<r_1+m$ (e.g., $r_1 = k -\frac m2$) and set $r:= r_2-r_1$.

  Let $\psi\in\mathscr D(\R^n)$ with $0\le \psi\le 1$ and $\psi(0)=1$, and let $C_\psi:= \|\FR^{-1}\psi\|_{L^1(\R^n)}$. For $\epsilon>0$, we define $\psi_\epsilon:= \psi(\epsilon\,\cdot\,)$ and $\varphi_\epsilon(\bk):= \psi_\epsilon(\bk)\id_E\;(\bk\in\Z^n)$. Then it is easily seen that $\varphi_\epsilon\in S^{r,\rho}$, $\|1-\varphi_\epsilon\|_{S^{r,\rho}} \to 0\;(\epsilon\searrow 0)$ and
  \[ \|\FR^{-1} \varphi_\epsilon\|_{L^1(\R^n,L(E))} = \| \FR^{-1} \psi_\epsilon\|_{L^1(\R^n)} = \|\FR^{-1}\psi\|_{L^1(\R^n)} = C_\psi\quad (\epsilon\in (0,1)).\]
  For $u\in W_p^k(\T^n,E)$, we set $u_\epsilon:= \op[\varphi_\epsilon]u\;(\epsilon\in (0,1))$. Then $u_\epsilon\in C^\infty(\T^n,E)$, and Theorem~\ref{3.15} and the embedding $ W_\infty^k(\T^n,E)\subset B_{\infty,1}^{r_1}(\T^n,E)$ (see Remark~\ref{3.8}) yield
  \begin{align*}
   \|u-u_\epsilon\|_{B_{\infty,1}^{r_1}(\T^n,E)}
  & = \| \op[1-\varphi_\epsilon]u\|_{B_{\infty,1}^{r_1}(\T^n,E)}\\
    & \le C_1 \| 1-\varphi_\epsilon\|_{S^{r,\rho}} \|u\|_{B_{\infty,1}^{r_2}(\T^n,E)} \\
  &   \le C_2 \| 1-\varphi_\epsilon\|_{S^{r,\rho}} \|u\|_{W_\infty^k(\T^n,E)}\to 0
  \quad (\epsilon\searrow 0).
\end{align*}
With Remark~\ref{4.9}, Theorem~\ref{3.16}, and the embedding $ B_{\infty,1}^{r_1+m}(\T^n,E)\subset W_\infty^k(\T^n,E)$ we obtain
\begin{align*}
  \|(\lambda+A_{a,k})^{-1} (u-u_\epsilon)\|_{W_\infty^k(\T^n,E)} & = \|b_\lambda(D)(u-u_\epsilon)\|_{W_\infty^k(\T^n,E)} \\
  & \le C_3 \|b_\lambda(D) (u-u_\epsilon)\|_{B_{\infty,1}^{r_1+m}(\T^n,E)} \\
  & \le C_4 \|u-u_\epsilon\|_{B_{\infty,1}^{r_1}(\T^n,E)} \to 0\quad (\epsilon\searrow0).
\end{align*}
Thus, $(\lambda+A_{a,k})^{-1} u_\epsilon \to (\lambda+A_{a,k})u\;(\epsilon\searrow0)$ in $W_\infty^k(\T^n,E)$. Since
\begin{align*}
  \|u_\epsilon\|_{W_\infty^k(\T^n,E)} & = \|\op[\phi_\epsilon]u\|_{W_\infty^k(\T^n,E)} = (2\pi)^{-n/2} \big\|(\FR^{-1} \phi_\epsilon) \ast u\big\|_{W_\infty^k(\T^n,E)} \\
  & = (2\pi)^{-n/2} \max_{|\alpha|\le k} \big\| (\FR^{-1}\varphi_\epsilon)\ast \partial^\alpha u\big\|_{L^\infty(\T^n,E)} \\
  & \textmd{\le} (2\pi)^{-n/2} C_\psi \|u\|_{W_p^k(\T^n,E)}
\end{align*}
due to Lemma~\ref{3.10} a), we get from Theorem~\ref{4.8}
\[ (\lambda+A_{a,k})^{-1} u_\epsilon\|_{W_\infty^k(\T^n,E)}\le \frac M{|\lambda|}\, \|u_\epsilon\|_{W_\infty^k(\T^n,E)} \le \frac C{|\lambda|}\, \|u\|_{W_\infty^k(\T^n,E)}\,.\]
Taking $\epsilon\searrow 0$ on the left-hand side, we obtain the statement of the theorem.
\end{proof}

The above results on the generation of semigroups in $W_p^k(\T^n,E)$ allow us to solve non-autonomous Cauchy problems, based on the abstract results in \cite{amann95}, Chapter~IV. For this, \textmd{let $T>0$ } and assume $\mathcal A = \{a(t,\cdot): t\in [0,T]\}\subset S^{m,\rho}$ to be a family of operator-valued symbols on the torus. For $p\in [1,\infty)$ and $k\in\N_0$, we denote by $A_{a,k}(t)$ the $W_p^k(\T^n,E)$-realization of $a(t,\cdot)$. We study the toroidal Cauchy problem
\begin{equation}
\addtolength{\arraycolsep}{-0.3em}
  \label{4-16}
  \left\{
  \begin{array}{rcll}
    \partial_t u(t) + A_{a,k}(t) u(t) & = &f(t)  \quad(t\in (0,T]),\\
    u(0) & = &\textmd{u_0}.&
  \end{array}\right.
\end{equation}
\addtolength{\arraycolsep}{0.3em}
A function $u\in C^1((0,T], W_p^k(\T^n,E))\cap C([0,T], W_p^k(\T^n,E))$ is called a classical solution of \eqref{4-16} if $u(t)\in D(A_{a,k}(t))$ for all $t\in (0,T]$, $u(0)=u_0$, and if \eqref{4-16} holds for all $t\in (0,T]$.

\begin{theorem}
  \label{4.12}
  Let $k\in \N_0$, $p\in [1,\infty)$, $\alpha,\sigma\in (0,1)$, and assume
  \[ \big[ t\mapsto a(t,\cdot)\big]\in C^\alpha([0,T], S^{m,\rho}).\]
  Moreover, assume that there exist $\omega\ge 0$ and $\kappa>0$ such that $\mathcal A = \{ a(t,\cdot): t\in [0,T]\}\subset P_{\omega,\kappa} S^{m,\rho}$. Then for every $u_0\in W_p^k(\T^n,E)$ and every $f\in C^\sigma([0,T], W_p^k(\T^n,E))$ the Cauchy problem \eqref{4-16} has a unique classical solution.
\end{theorem}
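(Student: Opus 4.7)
The plan is to apply the abstract existence theory for non-autonomous parabolic Cauchy problems developed in Amann's monograph \cite{amann95}, Chapter~IV. The main task is to verify, for the family $\{A_{a,k}(t)\}_{t\in[0,T]}$ of $W_p^k(\T^n,E)$-realizations, the two hypotheses required by that framework: uniform sectoriality on a common sector, and Hölder continuity of the resolvents in $t$.

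First, I would check uniform sectoriality. By assumption $\mathcal A \subset P_{\omega,\kappa} S^{m,\rho}$, so all the symbols $a(t,\cdot)$ are parabolic with common constants $\omega,\kappa$; moreover, the Hölder continuity of $t\mapsto a(t,\cdot)\in S^{m,\rho}$ on the compact interval $[0,T]$ makes $\mathcal A$ bounded in $S^{m,\rho}$. Hence Theorem~\ref{4.10}, applied with $\mathcal A$ as the bounded uniformly parabolic family, yields constants $M,R>0$ independent of $t$ such that $\Sigma_{\pi/2,R}\subset\rho(-A_{a,k}(t))$ and $\|(A_{a,k}(t)+\lambda)^{-1}\|_{L(W_p^k(\T^n,E))}\le M/|\lambda|$ for every $t\in[0,T]$ and every $\lambda\in\Sigma_{\pi/2,R}$.

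The crucial step is to establish Hölder continuity of the resolvents, namely
\[\big\|(A_{a,k}(t)+\lambda)^{-1}-(A_{a,k}(s)+\lambda)^{-1}\big\|_{L(W_p^k(\T^n,E))}\le C|t-s|^\alpha/|\lambda|\]
for $s,t\in[0,T]$ and $\lambda\in\Sigma_{\pi/2,R}$. Pointwise in $\bk$, the symbol difference factors as
\[b_\lambda(t,\bk)-b_\lambda(s,\bk)=-b_\lambda(t,\bk)\bigl(a(t,\bk)-a(s,\bk)\bigr)b_\lambda(s,\bk).\]
Combining the Hölder assumption $\|a(t,\cdot)-a(s,\cdot)\|_{S^{m,\rho}}\le C|t-s|^\alpha$ with the discrete Leibniz rule \eqref{4-0a} and the difference estimates of Lemma~\ref{4.5} for $b_\lambda$, I would show that the composite symbol $c_{\lambda,s,t}(\bk):=b_\lambda(t,\bk)-b_\lambda(s,\bk)$ satisfies
\[\|\Delta_\bk^\gamma c_{\lambda,s,t}(\bk)\|_{L(E)}\le C|t-s|^\alpha\langle\bk,\lambda\rangle^{-2m}\langle\bk\rangle^{m-|\gamma|}\quad(|\gamma|\le\rho,\,\bk\in\Z^n),\]
i.e.\ the same parabolic-resolvent bound that drove the analysis of Lemma~\ref{4.7}, modulated by the extra scalar factor $|t-s|^\alpha$. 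Rerunning the kernel argument of Lemma~\ref{4.7} and then the operator estimate of Theorem~\ref{4.8} with $b_\lambda$ replaced by $c_{\lambda,s,t}$, while tracking all constants uniformly in $s,t$, then delivers the desired Hölder bound on the resolvent difference in $L(W_p^k(\T^n,E))$.

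With uniform sectoriality and Hölder continuity of resolvents verified, the Acquistapace--Terreni-type generation theorem from \cite{amann95}, Chapter~IV, produces an evolution family $(U(t,s))_{0\le s\le t\le T}$ associated with $(-A_{a,k}(t))_{t\in[0,T]}$ and, for any $u_0\in W_p^k(\T^n,E)$ and any $f\in C^\sigma([0,T],W_p^k(\T^n,E))$, a unique classical solution of \eqref{4-16} of the stated regularity. The main obstacle is precisely the Hölder step: while the pointwise estimates on $c_{\lambda,s,t}$ are routine applications of Leibniz and parabolicity, the kernel argument of Lemma~\ref{4.7} must be carried out with the $(s,t)$-dependence tracked throughout, so that the $|t-s|^\alpha$ factor appears in the final bound while the $\lambda$-decay in $L(W_p^k(\T^n,E))$ is preserved exactly.
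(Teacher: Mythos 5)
Your proposal matches the paper's approach: the paper's proof is a one-line citation of Theorem~\ref{4.8} together with the abstract non-autonomous Cauchy problem result (Theorem~2.5.1 of Chapter~IV in \cite{amann95}), delegating the verification of the H\"older-continuity hypothesis to the analogous argument in \cite{barraza-denk-monzon14}. You correctly identify both hypotheses to check (uniform sectoriality from the resolvent estimate, and H\"older continuity of the resolvents via the resolvent identity combined with the discrete Leibniz rule and a rerun of the kernel estimate of Lemma~\ref{4.7}--Theorem~\ref{4.8}), so your sketch is simply a fleshed-out version of what the paper points to.
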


\begin{proof}
  Using Theorem~\ref{4.8}, this follows from the abstract result on Cauchy problems, Theorem~2.5.1 of Chapter IV in \cite{amann95} analogously to the proof of Theorem~\ref{4.3} in \cite{barraza-denk-monzon14}.
\end{proof}

\bibliographystyle{abbrv}
\def\cprime{$'$}

\end{document}